\crefname{equation}{Equation}{Equations}
\crefname{figure}{Figure}{Figures}
\newtheorem{theorem}{Theorem}[section]
\newtheorem{proposition}[theorem]{Proposition}
\newtheorem{lemma}[theorem]{Lemma}
\newtheorem{corollary}[theorem]{Corollary}
\tikzstyle{vtx}=[circle, draw, fill=black, inner sep=0pt, minimum width=5pt]
\tikzstyle{vtx-white}=[circle, draw, fill=white, inner sep=0pt, minimum width=5pt]
\newenvironment{eqnqed}{\begin{center} \hfill $\displaystyle}{$ \qed \end{center}}
\newcommand{\fig}[3]{
	\begin{figure}
		\centering
		#3
		\caption{#1}
		\label{#2}
	\end{figure}
}
\NewDocumentCommand{\subfig}{m m m m}{
	\seq_set_split:Nnn \l_width_seq { , } { #1 }
	\seq_set_split:Nnn \l_caption_seq { , } { #2 }
	\seq_set_split:Nnn \l_label_seq { , } { #3 }
	\seq_set_split:Nnn \l_tikzfile_seq { , } { #4 }

	\int_step_inline:nn { \seq_count:N \l_width_seq }{
		\begin{subfigure}{\seq_item:Nn \l_width_seq { ##1 }\textwidth}
			\centering
			\input{\seq_item:Nn \l_tikzfile_seq { ##1 }}
			\caption{\seq_item:Nn \l_caption_seq { ##1 }}
			\label{\seq_item:Nn \l_label_seq { ##1 }}
		\end{subfigure}%
	}
}
\newcommand{\ds}{\displaystyle}
\newcommand{\Xbar}{\overline{X}}
\newcommand{\xbar}{\overline{x}}
\newcommand{\cupdot}{\mathbin{\mathaccent\cdot\cup}}
\def\moverlay{\mathpalette\mov@rlay}
\def\mov@rlay#1#2{\leavevmode\vtop{%
	\baselineskip\z@skip \lineskiplimit-\maxdimen
	\ialign{\hfil$\m@th#1##$\hfil\cr#2\crcr}}}
\newcommand{\charfusion}[3][\mathord]{
		#1{\ifx#1\mathop\vphantom{#2}\fi
				\mathpalette\mov@rlay{#2\cr#3}
			}
		\ifx#1\mathop\expandafter\displaylimits\fi}
\newcommand{\bigcupdot}{\charfusion[\mathop]{\bigcup}{\cdot}}
\newcommand{\con}[1]{%
	\def\temp{#1}%
	\def\zero{0}%
	\ifx\temp\empty
		connected%
	\else
		\ifx\temp\zero
			disconnected%
		\else
			\mbox{$#1$-connected}%
		\fi
	\fi
}
\newcommand{\bcc}[1]{\mbox{$#1$-connected} bipartite cubic graph}
\newcommand{\cc}[1]{\mbox{$#1$-connected} cubic graph}
\newcommand{\mc}{matching covered}
\newcommand{\mcg}{\mc~graph}
\newcommand{\bmcg}{bipartite \mcg}
\newcommand{\codd}{\mathsf{c_{odd}}}
\newcommand{\f}{\pi}
\newcommand{\cfour}{\mathbb{C}_4}
\newcommand{\B}{\beta}
\renewcommand{\b}{\B'}
\newcommand{\lm}{\mbox{$\lambda$-matchable}}
\newcommand{\lmity}{\mbox{$\lambda$-matchability}}
\newcommand{\G}{\mathcal{G}}
\newcommand{\N}{\mathcal{N}}
\newcommand{\Rho}{\mathrm{P}}
\newcommand{\numlmpartners}[2]{\rho(#1,#2)}
\newcommand{\K}{\mathcal{K}}
\renewcommand{\L}{\mathcal{L}'}
\newcommand{\braces}{\mathcal{J}}
\newcommand{\thetabar}{\overline{\theta}}
\newcommand{\bstar}{\mathcal{B}^*}
\newcommand{\paritylemma}{the \hyperlink{parity lemma}{Parity Lemma (1.2)}}
\title{\lmity\ in cubic graphs}
\author{Santhosh Raghul\thanks{Supported by Ministry of Education, Government of India} \and Nishad Kothari\thanks{Supported by IC\&SR and aRtCS, IIT Madras}}
\date{IIT Madras \\~\\ \vspace{-1mm}Dedicated to beloved Professor Murty}
\begin{document}

\maketitle
\thispagestyle{empty}

\vspace{-8mm}
\begin{abstract}
	A vertex $v$ of a $2$-connected cubic graph $G$ is \emph{$\lambda$-matchable} if $G$ has a spanning subgraph in which $v$ has degree three whereas every other vertex has degree one, and we let $\lambda(G)$ denote the number of such vertices.
Clearly, $\lambda=0$ for bipartite graphs; ergo, we define \emph{$\lambda$-matchable pairs} analogously, and we let $\rho(G)$ denote the number of such pairs.

We improve the constant lower bounds on both $\lambda$ and $\rho$ established recently by Chen, Lu and Zhang [Discrete Math., 2025] using matching-theoretic invariants arising from the seminal work of Lov\'asz [J. Combin. Theory Ser. B, 1987], and we characterize all of the tight examples.
We also solve the problem posed by Chen, Lu and Zhang: characterize $2$-connected cubic graphs each of whose vertices is $\lambda$-matchable.

\end{abstract}


\section{Cubic graphs and \lmity}\label{sec: cubic graphs and lmity}
For graph-theoretic terminology, we follow Bondy and Murty \cite{bomu08}, whereas for terminology specific to matching theory, we follow Lucchesi and Murty \cite{lumu24}.
We adopt notational conventions from both; for instance, for sets $S$ and $T$, we use $S-T$ to denote $S \setminus T$, and if $T:=\{t\}$, we simplify $S - \{t\}$ to~$S-t$.
All graphs herein are loopless; however, we allow multiple/parallel edges.
By \emph{order} of a graph $G$, denoted by $n(G)$ or simply $n$, we mean its number of vertices; 
we use $\kappa(G)$ and $\kappa'(G)$ to denote its vertex-connectivity and edge-connectivity, respectively.
The following fact about $3$-regular (aka \emph{cubic}) graphs is well-known and easily proved.

\begin{proposition}\label{for cubic vtx-conn = edge-conn}
	Every cubic graph $G$ satisfies $\kappa(G) = \kappa'(G)$. \qed
\end{proposition}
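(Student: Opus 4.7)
\medskip
\noindent\textbf{Proof proposal.} The plan is to exploit the universally valid chain
$\kappa(G)\leqslant \kappa'(G)\leqslant \delta(G)=3$ for a cubic graph~$G$, so that both quantities lie in $\{0,1,2,3\}$, and to establish the reverse inequality $\kappa'(G)\leqslant \kappa(G)$ by a short case analysis on $k:=\kappa(G)$. The cases $k=0$ (disconnected) and $k=3$ are immediate from the chain above, so the substance lies in $k\in\{1,2\}$. In each case the strategy is to take a minimum vertex cut and, using the fact that every vertex has exactly three incident edges, exhibit an edge cut of size at most~$k$.

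For $k=1$, I would pick a cut vertex $v$ and a component $C$ of $G-v$. Let $d$ denote the number of edges from $v$ into $C$. Since $C$ is a component, $d\geqslant 1$; and since $v$ has another neighbour outside $C$ (as $G-v$ is disconnected), $d\leqslant 2$. If $d=1$, that single edge is already a cut of size one. Otherwise $d=2$, and then $v$ sends its remaining single edge to some other component $C'$; that edge is the required cut of size one. Either way $\kappa'(G)\leqslant 1=k$.

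For $k=2$, take a vertex cut $\{u,v\}$ and the components $C_1,\ldots,C_t$ of $G-\{u,v\}$, where $t\geqslant 2$. Writing $d_i:=d_i(u)+d_i(v)$ for the number of edges from $\{u,v\}$ into $C_i$, we have $\sum_i d_i \leqslant 6$ (with equality when $uv\notin E$). If $t\geqslant 3$, the average of the $d_i$ is at most~$2$, so some $C_i$ yields an edge cut of size at most~$2$. If $t=2$ and some $d_i\leqslant 2$, the same conclusion holds. The only remaining subcase is $t=2$ and $d_1=d_2=3$, which forces $uv\notin E$ and $\{d_i(u),d_i(v)\}=\{1,2\}$ for each $i$ (the values $0$ and $3$ are ruled out because they would make $u$ or $v$ a cut vertex, contradicting $\kappa=2$). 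Up to relabelling, $d_1(u)=d_2(v)=2$ and $d_1(v)=d_2(u)=1$, and then the bipartition $(V(C_1)\cup\{u\},\, V(C_2)\cup\{v\})$ has exactly $d_2(u)+d_1(v)=2$ crossing edges, giving $\kappa'(G)\leqslant 2$. Combined with $\kappa\leqslant \kappa'$, this yields equality in all cases.

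The main subtlety I expect is the balanced subcase $t=2$ with $d_1=d_2=3$: naively the two components each look like they contribute a cut of size~$3$, and it is only after rearranging which of $u,v$ goes on each side that the cut of size~$2$ appears. Everything else is straightforward degree-counting; the loopless-but-multigraph setting of the paper causes no trouble because the arguments above use only the degree of each vertex.
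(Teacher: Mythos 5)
The paper offers no proof of this proposition at all --- it is stated as ``well-known and easily proved'' and closed with an immediate \verb|\qed| --- so there is no argument of the authors' to compare yours against. Your proof is the standard one and its combinatorial core is correct and complete: the chain $\kappa\leqslant\kappa'\leqslant\delta=3$, the reduction of $k\in\{0,3\}$ to that chain, the degree count at a cut vertex for $k=1$, and in particular the balanced subcase $t=2$, $d_1=d_2=3$ for $k=2$, where your observation that $d_i(u),d_i(v)\in\{1,2\}$ (else one of $u,v$ is already a cut vertex) and the repartition $\bigl(V(C_1)\cup\{u\},\,V(C_2)\cup\{v\}\bigr)$ is exactly the right move. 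The one point you should make explicit, since the paper works with loopless multigraphs and follows Bondy--Murty, is the definition of $\kappa$ being used: under the naive ``minimum number of vertices whose deletion disconnects'' convention the statement is false for $\Theta$ (no vertex cut exists, yet $\kappa'=3$), so the paper must intend the local-connectivity definition $\kappa(G)=\min_{u\neq v}p(u,v)$, under which $\kappa(\Theta)=3$ and your $k=3$ case absorbs $\Theta$ and $K_4$. With that definition your step ``take a minimum vertex cut'' for $k\in\{1,2\}$ needs a sentence of justification when the minimizing pair $u,v$ is adjacent (Menger only hands you a vertex cut for nonadjacent pairs); in those adjacent cases one either finds the required small edge cut directly (e.g.\ a bridge when $k=1$, or the two parallel $uv$-edges when $k=2$ and $\mu=2$) or recovers a genuine vertex cut of size $k$, so the argument goes through, but as written this is a small unacknowledged gap rather than a wrong turn.
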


A graph is \emph{matchable} if it has a perfect matching, and an edge is \emph{matchable} if it belongs to some perfect matching.
A connected graph, of order two or more, is \emph{matching covered} if each of its edges is matchable.
Sch{\"o}nberger (1934) proved that a cubic graph is matching covered if and only if it is \con2.
\cref{fig: smallest 2-conn cubic graphs} shows the smallest \cc2s along with the corresponding notation.

\fig{A few small \cc2s}{fig: smallest 2-conn cubic graphs}{
	\subfig
	{ 0.15 , 0.17 , 0.17 , 0.25 , 0.25 } 
	{ $\Theta$ , $\cfour$ , $K_4$ , $K_{3,3}$ , $\overline{C_6}$ } 
	{fig: theta,fig: cubicc4,fig: k4,fig: k33,fig: c6bar} 
	{ theta.tikz , cubicc4.tikz , k4.tikz , k33.tikz , c6bar.tikz } 
}

Clearly, one may view a spanning subgraph as a set of edges and vice versa.
In this spirit, a perfect matching is a $1$-regular spanning subgraph.
We now proceed to define the notion of our interest.
For a vertex $v$ of a \cc2 $G$, a \emph{\mbox{$\lambda$-matching} centered at $v$}, or simply a \emph{\mbox{$v$-matching}}, is any spanning subgraph in which $v$ has degree three whereas every other vertex has degree one;
furthermore, we say that $v$ is \emph{\lm} if $G$ admits a \mbox{$v$-matching}.
Ergo, both perfect matchings and \mbox{$\lambda$-matchings} are spanning subgraphs with the property that each vertex has odd degree;
next, we intend to state a pertinent observation that applies to all such graphs.

To this end, we require the notion of cuts.
For a graph~$G$ and a subset $X \subseteq V(G)$, the set comprising those edges that have one end in $X$ and the other end in its complement $\Xbar:=V(G)-X$, is called a \emph{cut} of~$G$,
and is denoted by $\partial_G(X)$ or simply by $\partial(X)$;
we refer to $X$~and~$\Xbar$ as its \emph{shores}.
A cut $C$ is a \emph{\mbox{$k$-cut}} if $|C|=k$.
We are now ready to state the aforementioned observation in a more general context; it is an immediate consequence of the handshaking lemma, and shall be invoked throughout this paper.

\begin{lemma}
	\hypertarget{parity lemma}{{\sc [Parity Lemma]}}
	\label{parity lemma}
	\newline
	For a graph $G$, if $X$ is any subset of $V(G)$ whose each member has odd degree, then:
	\begin{eqnqed}
		|\partial(X)| ~\equiv~ |X|~(\rm{mod}~2)
	\end{eqnqed}
\end{lemma}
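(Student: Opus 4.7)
The plan is to derive the congruence directly from the handshaking identity applied to the subset $X$, which is the standard way to prove such parity facts in matching theory. The key observation is that if we sum the degrees (in $G$) of all vertices of $X$, every edge with both ends in $X$ is counted twice, while every edge in the cut $\partial(X)$ is counted exactly once. This yields the identity
\[
	\sum_{v \in X} d_G(v) ~=~ 2\,|E(G[X])| ~+~ |\partial(X)|,
\]
where $G[X]$ denotes the subgraph of $G$ induced by $X$.

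From here the proof is a one-line reduction modulo two. Since each vertex in $X$ has odd degree by hypothesis, the left-hand side is a sum of $|X|$ odd numbers, and hence
\[
	\sum_{v \in X} d_G(v) ~\equiv~ |X| \pmod{2}.
\]
The term $2\,|E(G[X])|$ on the right-hand side is even, so taking the identity modulo two immediately gives $|\partial(X)| \equiv |X| \pmod{2}$, as desired.

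There is no real obstacle here: the only subtlety worth flagging is that, since the paper permits multiple edges, one should be careful that ``$|E(G[X])|$'' counts parallel edges with multiplicity, so that the double-counting argument for edges within $X$ goes through verbatim. Loops are excluded by the standing convention in the paper, so they cause no trouble either. Thus the entire argument reduces to the handshaking identity plus a mod-two reduction, matching the paper's assertion that the lemma is an immediate consequence of the handshaking lemma.
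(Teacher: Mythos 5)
Your proof is correct and is precisely the argument the paper has in mind: the paper omits a written proof, stating only that the lemma is an immediate consequence of the handshaking lemma, and your degree-sum identity over $X$ followed by reduction modulo two is exactly that argument. Your remark about counting parallel edges with multiplicity is a sensible precaution given the paper's convention of allowing multiple edges.
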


In their recent work, Chen, Lu and Zhang \cite{clz25,clz25corr} study \lmity\ in cubic graphs. In particular, they establish constant lower bounds on the number of \lm\ vertices in \con2 nonbipartite cubic graphs; we shall mention their results in more detail in due course.
Their work relies on the equivalent definition that a vertex $v$ of a \cc2 $G$ is \lm\ if \mbox{$G-v-N(v)$} is matchable, provided $G$ is not $\Theta$ shown in \cref{fig: theta}; in fact, they consider only simple graphs, and they refer to such a vertex as a \emph{nice} vertex.
In this paper, we exploit our viewpoint stated above in terms of spanning subgraphs and \paritylemma\ to obtain stronger lower bounds that feature matching-theoretic invariants that arise from the work of Lov\'asz \cite{lova87}.

It is worth noting that other researchers have stumbled upon \lmity\ in their investigations pertaining to perfect matchings. For instance, Kr\'al, Sereni and Steibitz \cite{kss09} used them to establish that every \cc2 has at least $\frac{n}{2}$ perfect matchings.

Our next goal is to state a characterization of \lm\ vertices that appears in \cite{clz25}.
To do so, we need some concepts that arise in the study of perfect matchings.
We use $\codd(G)$ to denote the number of odd components (that is, components of odd order) of a graph~$G$.
Let us recall Tutte's \mbox{$1$-factor} Theorem which states that
a graph $G$ is matchable if and only if $\codd(G-S) \leqslant |S|$ for each $S \subseteq V(G)$;
\emph{barriers} are those subsets $S$ that satisfy this inequality with equality.
The following is easily proved using Tutte's Theorem.

\begin{proposition}\label{when is G-u-v matchable}
	For distinct vertices $u$ and $v$ of a matchable graph $G$, the graph $G-u-v$ is matchable if and only if $G$ has no barrier that contains both $u$ and $v$;
	consequently, $G$ is \mc\ if and only if each of its barriers is stable. \qed
\end{proposition}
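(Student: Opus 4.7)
The plan is to obtain the first equivalence as a direct application of Tutte's \mbox{$1$-factor} Theorem, handling the two directions separately, and then derive the stated consequence by noting that each edge corresponds to a vertex deletion.

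\textbf{Forward direction (contrapositive).} Suppose $G$ has a barrier $S$ with $\{u,v\}\subseteq S$. Since $S$ is a barrier, $\codd(G-S)=|S|$. Setting $T:=S-\{u,v\}$, the graph $(G-u-v)-T$ is identical to $G-S$, so
\[
\codd\bigl((G-u-v)-T\bigr) \;=\; \codd(G-S) \;=\; |S| \;=\; |T|+2 \;>\; |T|,
\]
and Tutte's Theorem forces $G-u-v$ to be unmatchable.

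\textbf{Reverse direction.} Suppose $G-u-v$ is unmatchable. Since $G$ is matchable, $n(G)$ is even, so $n(G-u-v)$ is also even; hence by Tutte's Theorem there exists $T\subseteq V(G-u-v)$ with $\codd((G-u-v)-T)>|T|$, and the standard parity argument (using that $n(G-u-v)$ is even) gives $\codd((G-u-v)-T)\geqslant |T|+2$. Define $S:=T\cup\{u,v\}$. Again $(G-u-v)-T=G-S$, so $\codd(G-S)\geqslant |S|$. But $G$ is matchable, so Tutte's inequality yields $\codd(G-S)\leqslant |S|$, and hence equality. Thus $S$ is a barrier of $G$ that contains both $u$ and $v$, as required.

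\textbf{Consequence.} An edge $e=uv$ of $G$ lies in some perfect matching of $G$ if and only if $G-u-v$ is matchable; this is routine, since any perfect matching containing $e$ restricts to a perfect matching of $G-u-v$, and conversely. Therefore $G$ is \mc\ if and only if, for every edge $uv$ of $G$, the subgraph $G-u-v$ is matchable. Applying the first part to each edge, this is equivalent to saying that no barrier of $G$ contains the two ends of any edge, which is precisely the statement that every barrier of $G$ is stable (that is, independent).

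The only mildly delicate point is the parity step in the reverse direction; everything else is a direct bookkeeping of $\codd$ under a two-vertex deletion, so there is no substantive obstacle.
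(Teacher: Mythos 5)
Your proof is correct and follows exactly the route the paper intends: the paper omits the argument (stating only that the proposition ``is easily proved using Tutte's Theorem''), and your two-direction application of Tutte's Theorem with the parity step $\codd((G-u-v)-T)\geqslant |T|+2$, followed by the observation that an edge $uv$ is matchable iff $G-u-v$ is matchable, is the standard way to fill it in. No gaps.
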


Using Tutte's Theorem, \paritylemma\ and \cref{when is G-u-v matchable}, one may easily deduce the result of Sch{\"o}nberger which states that every \cc2 is matching covered.
It is also worth noting that if $B$ is a barrier of a \mcg~$G$ then every component of $G-B$ is odd.
We now proceed to state the aforementioned characterization of \lm\ vertices. 
This also appears in \cite[Theorem 2.3]{clz25}, but we provide a proof for the sake of completeness.

\begin{lemma}\label{characterization of non lm vertices}
	A vertex $v$ of a \cc2 $G$ is not \lm\ if and only if there exists a barrier $B$ such that $v$ is an isolated vertex in $G-B$.
\end{lemma}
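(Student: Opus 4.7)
The plan is to prove both directions using Tutte's \mbox{$1$-factor} Theorem, \paritylemma, and the trivial observation that any \mbox{$v$-matching} $M$ must contain all three edges incident to $v$ (since $\deg_M(v) = 3$).

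For the backward direction, I assume a barrier $B$ exists with $v$ isolated in $G - B$, and suppose for contradiction that $M$ is a \mbox{$v$-matching}. All three edges at $v$ lie in $M \cap \partial(B)$. For each of the remaining $|B|-1$ odd components $K$ of $G - B$ (using $\codd(G-B)=|B|$), \paritylemma\ applied to the spanning subgraph $M$ with $X:=V(K)$ yields that a positive (because odd) number of edges of $M$ have exactly one end in $K$; all such edges lie in $\partial(B)$ since $K$ is a component of $G-B$. Summing over the $|B|$ odd components gives $|M \cap \partial(B)| \geqslant 3 + (|B|-1) = |B|+2$, which contradicts $|M \cap \partial(B)| \leqslant \sum_{b \in B} \deg_M(b) = |B|$ (valid because $v \notin B$, so every vertex of $B$ has degree one in $M$).

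For the forward direction, suppose $v$ is not \lm. If $v$ is incident to a multiple edge, I take $B := N(v)$, so $|B| \in \{1,2\}$ and $v$ is isolated in $G - B$. When $G = \Theta$, trivially $\codd(G-B) = 1 = |B|$. In the remaining subcase, say $N(v) = \{u_1, u_2\}$ with the edge $vu_1$ doubled, the \mbox{$2$-connectivity} of $G$ rules out $u_1 u_2 \in E(G)$ (otherwise $u_2$ would be a cut vertex separating $\{v, u_1\}$ from the rest), so $B$ is stable; an easy parity count (using that $G - v - N(v)$ has odd order $n-3$) combined with Tutte's Theorem on the matchable graph $G$ yields $\codd(G-B) = 2 = |B|$. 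Otherwise, $N(v) = \{u_1, u_2, u_3\}$ consists of three distinct vertices, and the existence of a \mbox{$v$-matching} is equivalent to the matchability of $G - v - N(v)$; applying Tutte's Theorem furnishes $S \subseteq V(G) - \{v\} - N(v)$ with $\codd((G - v - N(v))-S) > |S|$, which upgrades to $\geqslant |S|+2$ by parity (as $n$ is even, both sides have the same parity). Setting $B := S \cup N(v)$, the vertex $v$ becomes an additional isolated odd component of $G-B$, so $\codd(G-B) \geqslant |S| + 3 = |B|$; combined with $\codd(G-B) \leqslant |B|$ from Tutte's Theorem, this forces equality and $B$ is the desired barrier.

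The main subtlety is the multiple-edge subcase of the forward direction, where the candidate barrier $B := N(v)$ is not a priori stable; this is where the \mbox{$2$-connectivity} hypothesis is essential. Everything else is a bookkeeping exercise combining Tutte's Theorem with the parity lemma.
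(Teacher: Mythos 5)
Your proof is correct. The forward direction coincides with the paper's: in the generic case you apply Tutte's Theorem to $G-v-N(v)$, upgrade the deficiency to $|S|+2$ by parity, and take $B:=S\cup N(v)$; your only addition is a careful verification that $N(v)$ is a barrier in the degenerate case where $v$ lacks three distinct neighbours, which the paper dismisses as ``easy to see''. The backward direction, however, is argued differently: the paper deduces that $G-v-N(v)$ is not matchable by exhibiting the violating set $B-N(v)$ in Tutte's condition, whereas you derive a contradiction directly from a hypothetical $v$-matching $M$ by counting $M\cap\partial(B)$ in two ways using the Parity Lemma (at least $3+(|B|-1)$ from the odd components of $G-B$, at most $|B|$ from the degrees inside $B$). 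Your version is slightly more self-contained --- it never passes through the equivalence between $\lambda$-matchability of $v$ and matchability of $G-v-N(v)$, so it handles the multiple-edge case of that direction uniformly --- and it is very much in the spirit of the spanning-subgraph-plus-Parity-Lemma viewpoint the paper advocates; the paper's version is shorter because it leans on that equivalence.
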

\begin{proof}
	If $v$ does not have three distinct neighbors, it is easy to see that $v$ is not \lm\ and $N(v)$ is the desired barrier; now, suppose that $v$ has three distinct neighbors.

	First, assume there exists a barrier~$B$ such that $v$ is isolated in $G-B$. We let $B':=B-N(v)$.
	Observe that $\codd(G-v-N(v)-B') > |B'|$. Consequently, $G-v-N(v)$ is not matchable, or equivalently, $v$ is not \lm\ in $G$.

	Conversely, assume that $v$ is not \lm.
	Thus, by Tutte's Theorem and the fact that \mbox{$G-v-N(v)$} is of even order, there exists $S \subseteq V(G-v-N(v))$ such that $\codd(G-v-N(v)-S) \geqslant |S|+2$.
	Observe that $B:=S \cup N(v)$ is the desired barrier in $G$.
\end{proof}

Next, one may ask whether there are nontrivial lower bounds on the number of \lm\ vertices.
For a \cc2 $G$, we let $\Lambda(G)$ and $\lambda(G)$ denote the set and the number of its \lm\ vertices, respectively.
We use $H[A,B]$ to denote a bipartite graph with specified color classes $A$ and $B$, and 
we remark that every connected bipartite cubic graph is in fact \con2.
Clearly, due to the following easy observation, any lower bound on $\lambda(G)$ must evaluate to at most zero in the case of bipartite graphs.

\begin{proposition}\label{lambda is 0 for bipartite graphs}
	In a connected bipartite cubic graph $H[A,B]$, no vertex is \lm, or equivalently, $\lambda(H)=0$. \qed
\end{proposition}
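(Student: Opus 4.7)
The statement says that in a connected bipartite cubic graph $H[A,B]$, no vertex admits a $\lambda$-matching. Since the conclusion is an impossibility statement and a $v$-matching is a spanning subgraph with prescribed degree sequence, the natural plan is to assume such a spanning subgraph exists and derive a parity/counting contradiction from the bipartition.

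First I would fix a putative $\lambda$-matchable vertex $v$, say $v \in A$ without loss of generality, and let $F$ be a $v$-matching of $H$. By definition, in $F$ the vertex $v$ has degree $3$ and every other vertex has degree $1$; in particular every vertex of $F$ has odd degree. Since $H$ is bipartite with parts $A$ and $B$, every edge of $F$ has exactly one end in $A$ and one end in $B$, so $\partial_F(A) = E(F)$.

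Next I would count $|E(F)|$ in two ways. Summing degrees over $A$ gives
\[
	|E(F)| ~=~ \sum_{a \in A} \deg_F(a) ~=~ 3 + (|A| - 1) ~=~ |A|+2,
\]
whereas summing over $B$ gives $|E(F)| = |B|$. Hence $|B| = |A| + 2$, which contradicts the equality $|A| = |B|$ that is forced on any bipartite cubic graph by $3|A| = |E(H)| = 3|B|$. Alternatively, one may phrase the same contradiction via \paritylemma\ applied to $F$ with $X := A$: all members of $A$ have odd degree in $F$, so $|\partial_F(A)| \equiv |A| \pmod 2$, but the count above yields $|\partial_F(A)| = |A|+1$.

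There is no real obstacle here; the argument is essentially a one-line parity check, and it is independent of $v$, so no vertex can be $\lambda$-matchable, giving $\lambda(H)=0$.
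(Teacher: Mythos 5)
Your main double-counting argument is correct, and it is precisely the ``easy observation'' the paper has in mind: the proposition is stated with a bare \qed\ and no proof, so the intended justification is exactly this count of $|E(F)|$ from the two sides of the bipartition against $|A|=|B|$. One caveat: your ``alternative'' phrasing via \paritylemma\ does not actually work. The degree sum over $A$ gives $|\partial_F(A)| = |A|+2$, not $|A|+1$, and since $|A|+2 \equiv |A| \pmod 2$, applying the Parity Lemma to $F$ with $X:=A$ yields no contradiction --- nor can it, because that application never sees the colour class $B$. The contradiction here is an exact count, not a mod-$2$ phenomenon: it requires equating the two degree sums across the bipartition (equivalently, invoking $|A|=|B|$), which is what your first argument does. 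I would simply delete the parity aside and keep the double count.
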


In light of the above, for a connected bipartite cubic graph $H[A,B]$, it makes sense to define a notion of \lmity\ for pairs.
For a pair~$(a,b)$ of vertices, an \emph{\mbox{$(a,b)$-matching}} is any spanning subgraph of~$H$ such that the degrees of $a$ and $b$ are three whereas the degree of every other vertex is one;
furthermore, the pair $(a,b)$ is \emph{\lm} if $H$ admits an \mbox{$(a,b)$-matching}.
Note that an \mbox{$(a,b)$-matching} may exist only if $a$ and $b$ belong to distinct color classes; for the sake of notational convenience, we shall consider ordered pairs $(a,b)$ where $a \in A$ and $b \in B$.
We let $\Rho(H)$ and $\rho(H)$ denote the set and the number of \lm\ pairs in $H$, respectively.
We remark that, in their work, Chen, Lu and Zhang \cite{clz25} defined \lmity\ of pairs.
They work with the equivalent definition that a pair $(a,b)$ is \lm\ if $H-a-b-N(a)-N(b)$ is matchable, provided each of $a$ and $b$ has three distinct neighbors;
in fact, they restrict themselves to simple graphs, and they refer to such a pair as a \emph{nice} pair.
In their work, they establish constant lower bounds on $\rho$; we shall mention them in due course.

In this paper, we shall establish lower bounds on $\rho$ in \cref{sec: lm pairs in bipartite cubic graphs}, and we shall
find some of the intermediate results useful in establishing a lower bound on $\lambda$ in \cref{sec: lm vertices in cubic graphs}.
In both cases, we shall first prove our bounds for the \con3 case, and then extend them to \cc2s using the fact that every \cc2 admits a unique decomposition into \cc3s; see \cref{unique 2cd}.
One would hope to prove a lower bound on $\lambda$ that is linear in the order $n$.
However, such a bound does not exist even for nonbipartite \con3 cubic graphs;
see \cref{no linear bound on lambda}.

In order to prove \cref{no linear bound on lambda}, to explain constructions of various graph families, as well as to prove most of our results, we shall find the operation of `splicing', and its inverse notion `cut-contractions', useful.
Given any cut $\partial(X)$ of a graph $G$, the graph $G/X \rightarrow x$, or simply $G/X$, is obtained from $G$ by shrinking $X$ into a single \emph{contraction vertex} $x$;
the graph $G/\Xbar \rightarrow \xbar$ is defined analogously.
We refer to $G/X$ and $G/\Xbar$ as the \emph{$\partial(X)$-contractions}, or simply \emph{cut-contractions}, of $G$.
For instance, the $\partial(X)$-contractions of the graph shown in \cref{fig: k33 splice k4} are $K_4$~and~$K_{3,3}$.
Observe that $\kappa'(G') \geqslant \kappa'(G)$, where $G'$ is any cut-contraction of $G$;
this, coupled with \cref{for cubic vtx-conn = edge-conn}, yields the following that we shall find useful in our inductive proofs.

\begin{proposition}\label{3-cut contractions preserve connectivity}
	Every \cc2 $G$ satisfies $\kappa(G') \geqslant \kappa(G)$, where $C$ is any $3$-cut and~$G'$ is a $C$-contraction. \qed
\end{proposition}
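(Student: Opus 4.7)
The plan is to chain together three facts: the already-noted inequality $\kappa'(G') \geqslant \kappa'(G)$ for any cut-contraction, \cref{for cubic vtx-conn = edge-conn} identifying vertex- and edge-connectivity in cubic graphs, and the observation that $G'$ itself is still cubic. The last point is the only one that needs justification beyond what is already in the text, and it is where the hypothesis $|C|=3$ enters essentially.

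First I would verify that $G'$ is cubic. Write $C=\partial(X)$ and say $G'=G/X\rightarrow x$. Every vertex of $\overline{X}$ retains all of its incident edges in $G'$, so it still has degree three. The contraction vertex $x$ inherits exactly the edges of $\partial(X)=C$ as its incident edges (each such edge, having one end in $X$ and the other in $\overline{X}$, becomes an edge from $x$ to a vertex of $\overline{X}$, and since $G$ is loopless and we allow parallel edges, none of these are lost). Hence $\deg_{G'}(x) = |C| = 3$, and $G'$ is cubic. The analogous argument handles $G/\overline{X}$.

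Next I would invoke the inequality $\kappa'(G') \geqslant \kappa'(G)$ that was noted just before the proposition: any edge-cut $\partial_{G'}(Y)$ of $G'$ lifts to an edge-cut of $G$ whose shore is either $Y$ (if $x \notin Y$) or $(Y\setminus\{x\})\cup X$ (if $x \in Y$), with identically many crossing edges, so a minimum edge-cut of $G'$ is a candidate edge-cut of $G$. Since $G$ is cubic by hypothesis and $G'$ is cubic by the previous paragraph, \cref{for cubic vtx-conn = edge-conn} applies to both, giving
\[
\kappa(G') \;=\; \kappa'(G') \;\geqslant\; \kappa'(G) \;=\; \kappa(G),
\]
which is what we wanted. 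No real obstacle arises; the only point worth flagging is that the argument relies on the convention (stated explicitly in the paper) that multiple edges are permitted, since contracting a shore of a 3-cut may well create parallel edges between $x$ and vertices of $\overline{X}$.
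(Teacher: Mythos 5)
Your proof is correct and follows exactly the route the paper intends: the proposition is stated with an immediate \qed because the preceding text already observes that $\kappa'(G')\geqslant\kappa'(G)$ for any cut-contraction and then appeals to \cref{for cubic vtx-conn = edge-conn}, with the cubicity of $G'$ (which is where $|C|=3$ is used) left implicit. Your write-up simply makes that implicit step and the lifting of edge-cuts explicit, which is a faithful elaboration rather than a different argument.
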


A cut $\partial(X)$ of a \mcg~$G$ is a \emph{separating cut} if both $\partial(X)$-contractions are also \mc. For instance, it follows from Sch\"onberger's result and \cref{for cubic vtx-conn = edge-conn} that every $3$-cut in a \cc2 is a separating cut.
The following characterization of separating cuts is easily proved; see \cite[Theorem 4.2]{lumu24}.

\begin{proposition}\label{separating cuts characterization}
	A cut $\partial(X)$ of a \mcg\ $G$ is a separating cut if and only if, for each edge $e \in E(G)$, there exists a perfect matching $M$ containing $e$ such that $|\partial(X) \cap M| = 1$. \qed
\end{proposition}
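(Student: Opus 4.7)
The plan is to prove the biconditional by relating perfect matchings of $G$ back and forth with perfect matchings of the two cut-contractions $G_1 := G/X$ and $G_2 := G/\Xbar$; write $C := \partial(X)$ throughout.

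For the forward direction, suppose $C$ is a separating cut, so that both $G_1$ and $G_2$ are matching covered. Given any $e \in C$ with endpoints $u \in X$ and $v \in \Xbar$: since $e$ is an edge of $G_1$ incident to the contraction vertex, matching coverage supplies a perfect matching $N_1$ of $G_1$ containing $e$, whence $N_1 \setminus \{e\}$ is supported in $E(G[\Xbar])$ and forms a perfect matching of $G[\Xbar] - v$. Symmetrically, $G_2$ yields $N_2$ with $N_2 \setminus \{e\}$ a perfect matching of $G[X] - u$. Gluing, $M := (N_1 \setminus \{e\}) \cup \{e\} \cup (N_2 \setminus \{e\})$ is a perfect matching of $G$ meeting $C$ only in $\{e\}$.

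For the converse, suppose that for each $e \in C$ there is a perfect matching $M_e$ of $G$ with $M_e \cap C = \{e\}$. I would show that $G_1$ is matching covered, the case of $G_2$ being symmetric. Applying the Parity Lemma to $M_e$ viewed as a $1$-regular spanning subgraph yields $|X| \equiv |M_e \cap C| = 1 \pmod{2}$, so $|X|$ is odd and $G_1$ has even order; and $G_1$ inherits connectivity from $G$. For each $e \in C$, the set $(M_e \cap E(G[\Xbar])) \cup \{e\}$ is a perfect matching of $G_1$ containing $e$, which simultaneously shows that $G_1$ is matchable and that every edge of $G_1$ incident to the contraction vertex lies in some perfect matching of $G_1$. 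It remains to verify the same for each edge $e' \in E(G[\Xbar])$; equivalently, to produce a perfect matching $M$ of $G$ with $e' \in M$ and $|M \cap C| = 1$.

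The crux of the proof is this last step. Since $G$ is matching covered, choose a perfect matching $M$ of $G$ containing $e'$ that minimizes $k := |M \cap C|$; by the Parity Lemma applied to $M$ and $X$, the integer $k$ is odd. The claim is that $k = 1$. Assume $k \geq 3$ for contradiction, pick $f \in M \cap C$, and consider $M \triangle M_f$, which decomposes into $M$-$M_f$-alternating even cycles. Because $f \in M \cap M_f$, the $C$-edges appearing in this symmetric difference are precisely $(M \cap C) \setminus \{f\}$, all of them $M$-edges, and each cycle crosses $C$ an even number of times. If some such cycle $D$ with at least two $C$-edges avoids $e'$, then $M \triangle D$ is a perfect matching of $G$ still containing $e'$ but with strictly fewer edges of $C$, contradicting the minimality of $k$. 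The delicate case, where all $C$-edges of $M \triangle M_f$ lie in the unique cycle through $e'$, is handled by varying $f$ across $M \cap C$ --- exploiting that $f$ itself, being in $M \cap M_f$, cannot lie in the symmetric difference and hence cannot lie in a cycle through $e'$ --- to expose a switchable cycle disjoint from $e'$. This last bookkeeping is the main obstacle, and I would carry it out following Theorem~4.2 of Lucchesi and Murty.
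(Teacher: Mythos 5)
Your forward direction is correct for the edges of the cut, and it matches the intended argument (the paper itself gives no proof, only the citation to Lucchesi and Murty): restrict a perfect matching of one $\partial(X)$-contraction and extend it through the other. The genuine problem is the converse, and it stems from how you have parsed the statement. The condition is meant in the sense of Lucchesi and Murty's Theorem~4.2: for \emph{every} edge $e$ of $G$ --- including edges inside the shores --- there is a perfect matching $M$ with $e \in M$ and $|\partial(X) \cap M| = 1$ (which coincides with $\partial(X) \cap M = \{e\}$ precisely when $e$ is a cut edge). Under that reading the converse is a one-line restriction argument: for any edge $g$ of $G/X$, the matching $M$ supplied for the corresponding edge of $G$ meets $\partial(X)$ exactly once, so $M \cap E(G/X)$ is a perfect matching of $G/X$ through $g$. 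You instead assume the condition only for $e \in \partial(X)$ and then try to manufacture, for each shore edge $e'$, a perfect matching through $e'$ crossing the cut once. That stronger implication is false, so the ``delicate case'' you defer to the reference is not bookkeeping --- it cannot be carried out.

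Concretely, let $X=\{u_1,u_2,u_3,z_1,z_2\}$ and $\Xbar=\{v_1,v_2,v_3,w_1,w_2\}$, and let $G$ have all six edges $u_az_b$, the edge $z_1z_2$, all six edges $v_aw_b$, the edge $w_1w_2$, and the three cut edges $u_av_a$. This $G$ is \mc, and for each $a$ the perfect matching $\{u_av_a,\,u_jz_1,\,u_kz_2,\,v_jw_1,\,v_kw_2\}$ (with $\{j,k\}=\{1,2,3\}\setminus\{a\}$) meets $\partial(X)$ only in $u_av_a$, so your hypothesis holds for every cut edge. Yet the only perfect matching containing $w_1w_2$ is $\{u_1v_1,u_2v_2,u_3v_3,z_1z_2,w_1w_2\}$, which crosses the cut three times; hence $w_1w_2$ lies in no perfect matching of $G/X$ and $\partial(X)$ is not a separating cut. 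In your exchange argument applied to $e'=w_1w_2$, every choice of $f \in M\cap\partial(X)$ yields a single alternating $8$-cycle through $w_1w_2$ containing both remaining cut edges, so varying $f$ never exposes a switchable cycle avoiding $e'$. Two smaller points: under the intended reading your forward direction should also record the (easy) case of an edge inside a shore, obtained by extending a perfect matching of the relevant contraction containing it; and once the hypothesis is read correctly, the minimization and alternating-cycle machinery is unnecessary altogether.
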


Now, let $G$ and $H$ be disjoint graphs with specified vertices $u$ and $v$, respectively, such that \mbox{$d_G(u)=d_H(v)$}, and
let $\f$ denote any bijection from $\partial_G(u)$ to $\partial_H(v)$.
The graph $J:=(G \odot H)_{u,v,\f}$ is obtained from the union of $G-u$~and~$H-v$ by
adding the following edges: for each edge $e \in \partial_G(u)$, we add the edge~$u'v'$, where
$u'$ is the end of $e$ in $G$ that is distinct from $u$, and
$v'$ is the end of $\f(e)$ in $H$ that is distinct from $v$.
We say that $J$ is obtained by \emph{splicing $G$~at~$u$ with $H$~at~$v$ with respect to~$\f$}, or simply by \emph{splicing} $G$ and $H$, and the cut $\partial_J(V(G)-u)$ is the corresponding \emph{splicing cut}.
Furthermore, if the choices of $u$, $v$ and $\f$ are either irrelevant or clear from the context, we may simplify the notation to $G \odot H$.
For instance, see \cref{fig: k33 splice k4}.

\fig{$K_{3,3} \odot K_4$}{fig: k33 splice k4}{\begin{tikzpicture}[scale=1.625]

\tikzmath{
	\height = 0.6;
	\halfsidelength = sin(30)*\height;
	\sidelength = 2*\halfsidelength;
	\x = 1;
	\y = 0;
}


\foreach \i in {1,2,3}{
	\node[vtx] (b\i) at (\i,1.25) {};
}

\node[vtx] (a2) at (2,0) {};
\node[vtx] (a3) at (3,0) {};

\node[vtx] (v2) at (\x,\y+2*\height/3) {};
\node[vtx] (v1) at (\x-\halfsidelength, \y-\height/3) {};
\node[vtx] (v3) at (\x+\halfsidelength, \y-\height/3) {};


\foreach \i in {1,2,3}
	\foreach \j in {2,3}
		\draw (b\i) -- (a\j);

\draw (v1) -- (v2) -- (v3) -- (v1);

\draw (v1) to[out=105, in=180+53] (b1);
\draw (v2) -- (b2);
\draw plot [smooth,tension=1] coordinates {(v3) ($(a2)!0.5!(b2)$) (b3)};


\draw[thick,magenta] plot [smooth,tension=1.25] coordinates {($(v1)!0.5!(0,0)+(-0.05,0.1)$) ($(v2)!0.35!(b1)$) ($(v3)!0.5!(a2)+(0.05,0.1)$)};
\node[magenta] at ($(v1)!0.5!(0,0)+(-0.42,0.1)$) {$\partial(X)$};
\node at ($(v1)!0.5!(v3)-(0.025,0.22)$) {$X$};

\node[anchor=east] at ($(b1)+(-0.25,0)$) {$B'$};
\draw ($(b1)+(-0.25,0.25)$) rectangle ($(b3)+(0.25,-0.25)$);
\node [right = -0.5mm of a3] {$v$};

\node at (4,0) {};
\end{tikzpicture}}

Note that cut-contractions are unique but splicing is not.
For instance, both the Petersen graph and the pentagonal prism may be obtained by splicing wheels of order six at their hubs.
One may easily verify that the splicing of two \mcg s yields a \mcg, and the corresponding splicing cut is a separating cut.
The following easy observation regarding \lmity\ across separating cuts is a generalization of \cite[Lemma 2.11]{clz25}.

\begin{lemma}\label{lm vertex in a separating cut-contraction is lm in the bigger graph}
	Let $\partial(X)$ denote a separating cut of a \cc2 $G$. Every \lm\ vertex of $G/\Xbar \rightarrow \xbar$, that is not the contraction vertex $\xbar$, is also \lm\ in $G$.
\end{lemma}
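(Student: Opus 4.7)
The plan is to lift a $v$-matching of the cut-contraction $G/\Xbar$ to a $v$-matching of $G$ by gluing it with a suitable perfect matching of $G$ restricted to $\Xbar$. Since $v \neq \xbar$, we have $v \in X$, so if $M_1$ is a $v$-matching of $G/\Xbar \to \xbar$, then $v$ has degree three in $M_1$ while $\xbar$ has degree one. Consequently, exactly one edge incident to $\xbar$ in $G/\Xbar$ lies in $M_1$; as an edge of $G$, this is a unique edge $e \in \partial(X)$. Regarding $M_1$ as a subset of $E(G)$, it now consists of some edges of $G[X]$ together with $e$, and it covers every vertex of $X$ with the correct target degree together with the end $u$ of $e$ that lies in $\Xbar$ (with degree one).

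Next, I would invoke \cref{separating cuts characterization} for the cut $\partial(X)$ and the specific edge $e$ to obtain a perfect matching $N$ of $G$ with $N \cap \partial(X) = \{e\}$. Then $N \cap E(G[\Xbar])$ covers every vertex of $\Xbar - u$ exactly once and is disjoint from the edge set of $M_1$. Setting $M := M_1 \cup \bigl(N \cap E(G[\Xbar])\bigr)$, a routine degree check finishes the proof: $v$ retains degree three; each vertex of $X - v$ keeps the single edge of $M_1$ incident to it; the vertex $u$ is covered exactly once, by $e$; and each other vertex of $\Xbar$ is covered exactly once, by an edge of $N \cap E(G[\Xbar])$. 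Hence $M$ is a $v$-matching of $G$.

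The only delicate point is that the extension on $\Xbar$ must agree with $M_1$ at the cut edge, i.e., the same edge $e$ must be used on both sides; this is precisely the content of \cref{separating cuts characterization} applied to $e$, so no real obstacle arises.
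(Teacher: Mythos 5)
Your proposal is correct and follows essentially the same route as the paper: identify the unique edge $e$ of the $v$-matching $M_1$ crossing $\partial(X)$, use \cref{separating cuts characterization} to produce a matching on the $\Xbar$-side agreeing with $M_1$ at $e$, and take the union. The only (cosmetic) difference is that the paper phrases the $\Xbar$-side piece as a perfect matching of the contraction $G/X$ containing $e$, whereas you take a perfect matching $N$ of $G$ itself with $N\cap\partial(X)=\{e\}$ and restrict it to $G[\Xbar]$; these yield the same edge set.
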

\begin{proof}
	Let $v \in X$ such that $G/\Xbar$ has a $v$-matching, say $M_1$.
	By \cref{separating cuts characterization}, there exists a perfect matching~$M_2$ of~$G/X$ that contains the unique edge in $M_1 \cap \partial_{G}(X)$.
	Observe that $M_1 \cup M_2$ is the desired $v$-matching in $G$.
\end{proof}

It is easy to see that every vertex is \lm\ in $K_4$, and by the above lemma, every vertex in~$X$ is \lm\ in the graph shown in \cref{fig: k33 splice k4}. The reader may verify that every vertex in $B'$ is \lm\ as well. Note that $B'$ is a barrier; therefore, by \cref{characterization of non lm vertices}, it is clear that $\Lambda(X)=B' \cup X$. We shall find this fact useful in proving the negative result. We shall also find the following result useful throughout this paper, including the proof of the negative result.

\begin{proposition}\label{splicing of two 3-conn cubic graphs is 3-conn}
	Any splicing of \con3 cubic graphs is also (cubic and) \con3.
\end{proposition}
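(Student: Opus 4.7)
The plan is to first observe that $J$ is cubic---splicing removes $u$ and $v$ together with the six edges of $\partial_G(u) \cup \partial_H(v)$, and replaces them with three new edges, each joining a former neighbor of $u$ to a former neighbor of $v$---so every vertex of $J$ retains degree three. By \cref{for cubic vtx-conn = edge-conn}, it then suffices to show that $\kappa'(J) \geqslant 3$.

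Suppose for a contradiction that $J$ admits an edge cut $\partial_J(Y)$ of size at most two. I would set $Y_G := Y \cap (V(G) - u)$ and $Y_H := Y \cap (V(H) - v)$, define $\overline{Y}_G$ and $\overline{Y}_H$ analogously, and split into two cases. \textbf{Case 1:} one of $Y_G, Y_H$ is either empty or equal to its entire side. If $Y$ equals $V(G) - u$ or $V(H) - v$ then $\partial_J(Y)$ is the splicing cut, which has size three. In each of the remaining subcases, $\partial_J(Y)$ coincides with exactly one of $\partial_G(Y_G), \partial_G(\overline{Y}_G), \partial_H(Y_H), \partial_H(\overline{Y}_H)$, because each splicing edge acts as a substitute for the edge to $u$ (or $v$) that it replaced; each such cut has size at least three since $G$ and $H$ are \con3, yielding the desired contradiction.

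\textbf{Case 2:} each of $Y_G, \overline{Y}_G, Y_H, \overline{Y}_H$ is nonempty. Let $a_G$ be the number of edges of $G - u$ that cross $(Y_G, \overline{Y}_G)$, and define $a_H$ analogously. Partition the three splicing edges by which side of $(Y_G, \overline{Y}_G)$ their $G$-end belongs to, and which side of $(Y_H, \overline{Y}_H)$ their $H$-end belongs to, giving counts $c_1, c_2, c_3, c_4$, where $c_1$ counts the class $(Y_G, \overline{Y}_H)$ and $c_2$ counts $(\overline{Y}_G, Y_H)$. Then $c_1 + c_2 + c_3 + c_4 = 3$ and
\[ |\partial_J(Y)| ~=~ a_G + a_H + c_1 + c_2. \]
Using the correspondence between former $u$-neighbors (resp.\ $v$-neighbors) and the three splicing edges, one obtains
\[ |\partial_G(Y_G)| = a_G + c_1 + c_3, \quad |\partial_G(\overline{Y}_G)| = a_G + c_2 + c_4, \]
\[ |\partial_H(Y_H)| = a_H + c_2 + c_3, \quad |\partial_H(\overline{Y}_H)| = a_H + c_1 + c_4. \]
Each left-hand side is at least three by \cref{for cubic vtx-conn = edge-conn} applied to $G$ and $H$; summing the four inequalities and substituting $c_1 + c_2 + c_3 + c_4 = 3$ yields $a_G + a_H \geqslant 3$, and hence $|\partial_J(Y)| \geqslant 3$, a contradiction.

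The main obstacle, I expect, is the bookkeeping in Case 2---keeping track of how each of the three splicing edges contributes to $|\partial_J(Y)|$ versus to the four cuts of $G$ and $H$---but a careful labeling $c_1, c_2, c_3, c_4$ makes the sum telescope, and the boundary subcases in Case 1 just need a patient case split.
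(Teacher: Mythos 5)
Your proof is correct, but it takes a genuinely different route from the paper's. The paper argues directly about vertex connectivity: after disposing of the $\Theta$ case (so that the splicing cut is a matching), it shows that $J-u-v$ is connected for every pair of distinct vertices $u,v$, by observing that deleting the two vertices from the appropriate contraction(s) leaves connected pieces of $J$ that are joined by at least one surviving splicing edge. You instead pass to edge connectivity via \cref{for cubic vtx-conn = edge-conn} and run a counting argument: any candidate cut $\partial_J(Y)$ induces the four cuts $\partial_G(Y_G)$, $\partial_G(\overline{Y}_G)$, $\partial_H(Y_H)$, $\partial_H(\overline{Y}_H)$, each of size at least three, and summing the four identities forces $a_G+a_H\geqslant 3$ and hence $|\partial_J(Y)|\geqslant 3$. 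I checked your bookkeeping in Case~2 (with $c_3$ counting the class $(Y_G,Y_H)$ and $c_4$ the class $(\overline{Y}_G,\overline{Y}_H)$, all four displayed identities are right) and your boundary subcases in Case~1; both are sound, and your argument conveniently needs no separate treatment of $\Theta$, since when a piece is $\Theta$ only Case~1 can occur. The paper's proof is shorter and more structural; yours is the standard uncrossing/counting template for edge cuts across a splicing, which is arguably more mechanical and would generalize verbatim to splicings of $k$-edge-connected $k$-regular graphs.
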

\begin{proof}
	Let $G:=G_1 \odot G_2$, where $G_1$ and $G_2$ are \cc3s, and
	let $\partial_G(X)$ be the corresponding splicing cut.
	Adjust notation so that $G_1=G/\Xbar \rightarrow \xbar$ and $G_2=G/X \rightarrow x$.
	Clearly, if either of $G_1$ and $G_2$ is $\Theta$, the desired conclusion holds trivially.
	Now, suppose that neither of them is~$\Theta$; whence $n(G) \geqslant 4$ and $\partial_G(X)$ is a matching.
	We shall argue that, for each pair of distinct \mbox{vertices $u,v \in V(G)$}, the graph $G-u-v$ is connected;
	adjust notation so that $u \in X$.

	First, suppose that $v \in X$.
	Since $G_1$ is \con3, $(G-u-v)/\Xbar=G_1-u-v$ is connected; consequently, since $G[\Xbar]=G_2-x$ is connected, so is $G-u-v$.
	Now, suppose that $v \in \Xbar$.
	Since $G_1$ and $G_2$ are \con3, $G[X-u]=G_1-u-\xbar$ and $G[\Xbar-v]=G_2-v-x$ are connected.
	Since $\partial_G(X)$ is a matching, at least one of its edges exists in $G-u-v$. Using these facts, we infer that $G-u-v$ is connected.
\end{proof}

We are now ready to state and prove the aforementioned negative result which implies that there is no lower bound on the number of \lm\ vertices that is linear in the order $n$.

\begin{proposition}\label{no linear bound on lambda}
	For every even integer $n \geqslant 12$, there exists a \con3 nonbipartite cubic graph~$G$ of order $n$ such that $\lambda(G)=6$.
\end{proposition}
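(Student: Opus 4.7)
My plan is to construct, for each even $n \geqslant 12$, a graph $G := G_0 \odot H$, where $G_0 := K_{3,3} \odot K_4$ is the $8$-vertex \con{3} nonbipartite cubic graph depicted in \cref{fig: k33 splice k4}, which (as observed in the preceding discussion) satisfies $\Lambda(G_0) = B' \cup X = \{b_1, b_2, b_3, v_1, v_2, v_3\}$; and where $H$ is a \bcc{3} of order $n - 6$ with color classes $A^{(H)} \ni v$ and $B^{(H)}$. The splicing is performed at the non-\lm\ vertex $a_2$ of $G_0$ and at $v$ of $H$. The existence of such $H$ for every even order $\geqslant 6$ follows by starting from $K_{3,3}$ (order $6$) or $Q_3$ (order $8$) and iteratively splicing with $K_{3,3}$'s at appropriately chosen vertices, preserving both bipartiteness and \con{3}ness (by \cref{splicing of two 3-conn cubic graphs is 3-conn}).

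I will first verify that $G$ is a \cc{3} (by \cref{splicing of two 3-conn cubic graphs is 3-conn} applied to $G_0$ and $H$), of order $8 + (n-6) - 2 = n$, and nonbipartite (the triangle $v_1 v_2 v_3$ inherited from $K_4$ is an odd cycle). The lower bound $\lambda(G) \geqslant 6$ is then immediate from \cref{lm vertex in a separating cut-contraction is lm in the bigger graph} applied to the splicing cut $\partial(V(G_0) - a_2)$ (where $G/\Xbar = G_0$): each of the six vertices in $\Lambda(G_0)$ remains \lm\ in $G$.

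The main work is proving the matching upper bound $\lambda(G) \leqslant 6$ by producing, for each of the remaining $n - 6$ vertices, a barrier of $G$ in which that vertex is isolated, and invoking \cref{characterization of non lm vertices}. Three barriers suffice. First, $\{b_1, b_2, b_3\}$ (already a barrier of $G_0$) remains a barrier of $G$ and isolates $a_3$; its removal yields the triangle $\{v_1, v_2, v_3\}$, the isolated $a_3$, and the connected subgraph $H - v$ of odd order $n - 7$. Second, the color class $B^{(H)}$ is a barrier of $G$ that isolates every vertex of $A^{(H)} - v$; by bipartiteness of $H$ these $|A^{(H)}| - 1$ vertices are each isolated upon deletion, while the $G_0$-side remains a single connected component of odd order $7$, yielding $|B^{(H)}|$ odd components. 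Third, the set $(A^{(H)} - v) \cup \{b_1, b_2, b_3\}$ is a barrier of $G$ isolating every vertex of $B^{(H)}$; its removal destroys every $H$-edge incident to $B^{(H)}$ along with every splicing edge, leaving the triangle, the isolated $a_3$, and the $|B^{(H)}|$ vertices of $B^{(H)}$ each as an isolated component, giving $|B^{(H)}| + 2$ odd components.

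The main conceptual step I expect to require care is the identification of the third barrier, which must combine $A^{(H)} - v$ from the $H$-side with $\{b_1, b_2, b_3\}$ from the $G_0$-side so as to simultaneously sever all $H$-edges incident to $B^{(H)}$ and all splicing edges. Once this barrier is spotted, verifying the three barrier conditions is a routine count using $|A^{(H)}| = |B^{(H)}| = (n-6)/2$.
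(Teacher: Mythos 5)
Your proposal is correct and follows essentially the same route as the paper: the identical construction $(K_{3,3}\odot K_4)\odot H$ with $H$ a \bcc3 of order $n-6$, the lower bound via \cref{lm vertex in a separating cut-contraction is lm in the bigger graph} applied to the splicing cut, and the upper bound via explicit barriers together with \cref{characterization of non lm vertices}. The only cosmetic differences are that you use three barriers where the paper uses two (your first barrier is subsumed by your third, which already isolates $a_3$), and you spell out the existence of $H$ for every even order $\geqslant 6$, which the paper leaves implicit.
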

\begin{proof}
	Let $G$ be any graph obtained by splicing $K_{3,3} \odot K_4$, shown in \cref{fig: k33 splice k4}, at $v$ with any \bcc3 $H[A,B]$ of order $n-6$ at a vertex $b \in B$.
	An abstract drawing of such a graph is shown in \cref{fig: negative result}.
	By \cref{splicing of two 3-conn cubic graphs is 3-conn}, $G$ is a \cc3.
	Note that $B' \cup B - b$ and $A$ are barriers of $G$. Clearly by \cref{characterization of non lm vertices}, each vertex except for the ones in $B' \cup X$ are not \lm\ in~$G$.
	By \cref{lm vertex in a separating cut-contraction is lm in the bigger graph}, it is clear that $\Lambda(G)=B' \cup X$; whence, $\lambda(G)=6$.
\end{proof}

\fig{An illustration for the proof of \cref{no linear bound on lambda}}{fig: negative result}{\begin{tikzpicture}[scale=1.6]

\tikzmath{
	\height = 0.6;
	\halfsidelength = sin(30)*\height;
	\sidelength = 2*\halfsidelength;
	\x = 1;
	\y = 0;
}


\foreach \i in {1,2,3}{
	\node[vtx] (b\i) at (\i,1.25) {};
}

\node[vtx] (a2) at (2,0) {};
\coordinate (a3) at (3,0);

\node[vtx] (v2) at (\x,\y+2*\height/3) {};
\node[vtx] (v1) at (\x-\halfsidelength, \y-\height/3) {};
\node[vtx] (v3) at (\x+\halfsidelength, \y-\height/3) {};


\draw ($(b1)+(2.75,0.25)$) rectangle ($(b1)+(7.25,-0.25)$);
\draw ($(a1)+(1.75,0.25)$) rectangle ($(a1)+(7.25,-0.25)$);


\foreach \i in {1,2,3}
		\draw (b\i) -- (a2);

\draw (v1) -- (v2) -- (v3) -- (v1);

\draw (v1) to[out=105, in=180+53] (b1);
\draw (v2) -- (b2);
\draw plot [smooth,tension=1] coordinates {(v3) ($(a2)!0.5!(b2)$) (b3)};

\draw (b1) -- (a3);
\draw (b2) -- (4,0);
\draw (b3) -- (5,0);

\draw (5.6,0) -- (5.6,1.25);
\draw (6.6,0) -- (6.6,1.25);
\draw (7.6,0) -- (7.6,1.25);

\node[anchor=east] at ($(b1)+(-0.25,0)$) {$B'$};
\draw ($(b1)+(-0.25,0.25)$) rectangle ($(b3)+(0.25,-0.25)$);
\node[anchor=west] at ($(b1)+(7.25,0)$) {$B-b$};
\node[anchor=west] at ($(a1)+(7.25,0)$) {$A$};

\node at (-0.25,0) {};
\end{tikzpicture}}

Consequently, our bound for $\lambda$ features a matching-theoretic invariant that arises from the work of Lov\'asz~\cite{lova87}.
A cut $C$ of a \mcg\ $G$ is \emph{tight} if $|C \cap M| = 1$ for each perfect matching~$M$.
It is easy to see that every tight cut is a separating cut. 
The converse, however, holds only for bipartite graphs; for instance, the triangular prism $\overline{C_6}$ has a separating $3$-cut that is not tight.
A cut is \emph{trivial} if either shore contains at most one vertex; otherwise, it is \emph{nontrivial}.
Ergo, if $C$ is a nontrivial tight cut, both its $C$-contractions are smaller \mcg s.
If either $C$-contraction has a nontrivial tight cut, one may repeat this process recursively.
Clearly, at the end, one obtains a list of matching covered graphs, each of which is free of nontrivial tight cuts.
This procedure is called the \emph{tight cut decomposition procedure}.
For instance, an application of the tight cut decomposition procedure to $K_{3,3} \odot K_4$, shown in \cref{fig: k33 splice k4}, yields one $K_{3,3}$ and one $K_4$.

\fig{The \emph{cube} graph}{fig: cube}{\begin{tikzpicture}[scale=1.75]

\tikzmath{
	\one = 0.365;
	\two = 0.825;
}


\node[vtx] (a1) at (-\two,  \two) {};
\node[vtx-white] (a2) at ( \two,  \two) {};
\node[vtx] (a3) at ( \two, -\two) {};
\node[vtx-white] (a4) at (-\two, -\two) {};

\node[vtx-white] (b1) at (-\one,  \one) {};
\node[vtx] (b2) at ( \one,  \one) {};
\node[vtx-white] (b3) at ( \one, -\one) {};
\node[vtx] (b4) at (-\one, -\one) {};


\draw (a1) -- (a2) -- (a3) -- (a4) -- (a1);
\draw (b1) -- (b2) -- (b3) -- (b4) -- (b1);
\foreach \i in {1,...,4}
	\draw (b\i) -- (a\i);

\end{tikzpicture}}

A \mcg\ free of nontrivial tight cuts is a \emph{brace} if it is bipartite; otherwise it is a \emph{brick}.
The smallest cubic braces are $\Theta, \cfour, K_{3,3}$ and the \emph{cube} graph, whereas the smallest cubic bricks are $K_4$ and~$\overline{C_6}$; see \cref{fig: smallest 2-conn cubic graphs,fig: cube}.
It is easy to see that all cubic bricks, as well as cubic braces of order six or more, are simple.
Lov\'asz \cite{lova87} proved the following remarkable result.

\begin{theorem}\label{tcd uniqueness}
	Any two applications of the tight cut decomposition procedure to a \mcg\ $G$ yield the same list of bricks and braces (up to multiplicities of edges).
\end{theorem}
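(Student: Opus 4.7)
The plan is to induct on the order $n(G)$. The base case covers matching covered graphs free of nontrivial tight cuts (the bricks and braces themselves), where the procedure trivially outputs the singleton list $\{G\}$. For the inductive step, observe that once a first nontrivial tight cut $\partial(X)$ is selected, both $G/X$ and $G/\overline{X}$ are strictly smaller matching covered graphs whose decomposition lists are canonical by the inductive hypothesis; hence the complete output is a well-defined function of $G$ and the initial tight cut. What remains is to show that this function does not depend on the choice of first tight cut: for any two candidates $\partial(X_1)$ and $\partial(X_2)$, the final lists coincide. The analysis splits according to whether $\partial(X_1)$ and $\partial(X_2)$ \emph{cross}, that is, whether each of $X_1 \cap X_2$, $X_1 \setminus X_2$, $X_2 \setminus X_1$ and $V(G) \setminus (X_1 \cup X_2)$ is nonempty.

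In the non-crossing case, one may assume $X_2 \subseteq X_1$ after relabeling. Then $\partial(X_2)$ descends to a nontrivial tight cut of $G/\overline{X_1}$ and $\partial(X_1)$ descends to a nontrivial tight cut of $G/X_2$. Performing both splits in either order produces the same collection of three strictly smaller matching covered graphs---namely $G/\overline{X_2}$, an appropriate cut-contraction of $G[X_1]$, and $G/X_1$---to each of which the inductive hypothesis applies to yield identical lists.

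The crossing case rests on the classical \emph{uncrossing lemma}: if $\partial(X)$ and $\partial(Y)$ are crossing tight cuts of a matching covered graph, then at least one of the pairs $\{\partial(X \cap Y),\, \partial(X \cup Y)\}$ and $\{\partial(X \setminus Y),\, \partial(Y \setminus X)\}$ consists of tight cuts. Its proof combines the edge-counting identity $|\partial(X)| + |\partial(Y)| = |\partial(X \cap Y)| + |\partial(X \cup Y)| + 2\,|E(X \setminus Y, Y \setminus X)|$ with the parity fact $|M \cap \partial(Z)| \equiv |Z| \pmod{2}$ afforded by \paritylemma: for any perfect matching $M$, tightness forces $|M \cap \partial(X)| + |M \cap \partial(Y)| = 2$, the parity of $|X \cap Y|$ selects which candidate pair has shores of odd size, and the identity combined with $|M \cap \partial(Z)| \geqslant 1$ for odd shores forces each cut in that pair to meet $M$ in exactly one edge. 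A tight cut from this pair does not cross $\partial(X_2)$ by construction, reducing the crossing case to the non-crossing case above.

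The main obstacle is the degenerate subcase of the uncrossing lemma where a produced tight cut is trivial---this occurs precisely when $|X_1 \cap X_2| = 1$ or $|V(G) \setminus (X_1 \cup X_2)| = 1$, so that the uncrossing yields $\delta(v)$ for a single vertex $v$. In such situations the induction does not immediately step down, and these must be dispatched by a direct structural argument: one invokes the barriers that realize $\partial(X_1)$ and $\partial(X_2)$ (via \cref{when is G-u-v matchable} adapted to the tight-cut setting) to produce an alternate nontrivial tight cut that does not cross both, thereby restoring induction. Once this exceptional case is handled, the induction closes and uniqueness of the brick/brace list (up to edge multiplicities) follows.
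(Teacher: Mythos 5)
The paper itself does not prove \cref{tcd uniqueness}: it is Lov\'asz's uniqueness theorem, quoted with a citation to \cite{lova87}, so there is no in-paper argument to compare against. Judged on its own terms, your outline correctly reproduces the easy half of the standard strategy: the induction set-up, the reduction to comparing two choices of first nontrivial tight cut, the laminar (non-crossing) case via a common refinement, and the uncrossing lemma for tight cuts together with its submodularity-plus-parity proof are all sound.

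The genuine gap is the degenerate crossing case, which you dismiss in one sentence but which is where essentially all of the difficulty of the theorem lives. Two specific problems. First, your description of when degeneracy occurs is incomplete: in the parity case where $X_1\setminus X_2$ and $X_2\setminus X_1$ are the odd shores, the obstruction is $|X_1\setminus X_2|=1$ or $|X_2\setminus X_1|=1$, not the two conditions you list; and the induction only truly fails when \emph{both} cuts of the relevant odd pair are trivial (if one is nontrivial it serves as the intermediate cut). Second, and more seriously, your proposed repair --- invoke ``the barriers that realize $\partial(X_1)$ and $\partial(X_2)$'' to produce a nontrivial tight cut laminar with both --- is unsupported on two counts: a tight cut of a matching covered graph need not be a barrier cut at all (the Edmonds--Lov\'asz--Pulleyblank theorem only guarantees that \emph{some} nontrivial tight cut is a barrier cut or a $2$-separation cut, not that every tight cut arises from a barrier), and there is no reason a nontrivial tight cut crossing neither $\partial(X_1)$ nor $\partial(X_2)$ should exist in the degenerate configuration. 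The known proofs (Lov\'asz's original argument via the matching lattice, or the later elementary treatments such as the one in \cite{lumu24}) handle exactly this configuration by a substantial direct analysis of the four contractions $G/X_1$, $G/\overline{X_1}$, $G/X_2$, $G/\overline{X_2}$, and this analysis runs to several pages. Without it the induction does not close, so what you have is an outline of the routine part of the proof with the hard part asserted rather than proved.
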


By \emph{bricks and braces of} a \mcg\ $G$, denoted as $\bstar(G)$, we mean the multiset that comprises the underlying simple graphs of the bricks and braces obtained from~$G$ by any application of the tight cut decomposition procedure.
Furthermore, we use $b(G)$ to denote the number of its bricks, and for reasons that will become clear soon, we use $b'(G)$ to denote the number of its braces of order six or more.
We remark that Kothari and Murty \cite{km16} employed \lmity\ to prove that a matching covered graph is \mbox{`$J$-free'} if and only if each of its bricks is \mbox{`$J$-free'}, where $J$ is any cubic brick; we refer the interested reader to their paper for more details.
The following is easy to prove using \cref{characterization of tight cuts in bmcgs} that appears in \cref{sec: lm pairs in bipartite cubic graphs}.

\begin{proposition}\label{mcg is bipartite iff b=0}
	A matching covered graph $G$ is bipartite if and only if $b(G)=0$. \qed
\end{proposition}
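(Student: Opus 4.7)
For the forward implication, I will argue by induction on the tight cut decomposition that bipartiteness is preserved at every contraction step, and thereby conclude that every element of $\bstar(G)$ is bipartite. Let $G[A,B]$ be a bipartite matching covered graph and let $\partial(X)$ be a nontrivial tight cut. Here I will invoke the forthcoming \cref{characterization of tight cuts in bmcgs}, which (as is typical for such characterizations) forces every tight cut in a bipartite matching covered graph to be a barrier cut arising from a barrier lying inside a single color class; in particular, all edges of $\partial(X)$ share the same color class at their $X$-ends, say $A \cap X$. Then the contraction vertex of $G/X$ has all its neighbors in $B$, so placing it into $A$ extends the bipartition inherited from $G-X$ to a bipartition of $G/X$; the symmetric argument bipartitions $G/\Xbar$. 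Iterating, every member of $\bstar(G)$ is bipartite, hence a brace, so $b(G)=0$.

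For the backward implication, I induct on $n(G)$. If $G$ has no nontrivial tight cut, then $\bstar(G)=\{G\}$ and $b(G)=0$ forces $G$ itself to be a brace, hence bipartite. Otherwise, pick any nontrivial tight cut $\partial(X)$; by \cref{tcd uniqueness}, $\bstar(G)=\bstar(G/X)\uplus \bstar(G/\Xbar)$, so $b(G/X)=b(G/\Xbar)=0$, and the inductive hypothesis yields bipartitions of both contractions. Choose bipartitions $(A_1,B_1)$ of $G/X$ with contraction vertex $x\in A_1$, and $(A_2,B_2)$ of $G/\Xbar$ with contraction vertex $\xbar\in B_2$, and glue them by setting $A:=(A_1\cap \Xbar)\cup (A_2\cap X)$ and $B:=(B_1\cap \Xbar)\cup (B_2\cap X)$. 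Internal edges of $X$ and of $\Xbar$ respect this partition by construction, and for any cut edge $ab$ with $a\in X$ and $b\in \Xbar$, the edge $\xbar a$ in $G/\Xbar$ forces $a\in A_2$ while the edge $xb$ in $G/X$ forces $b\in B_1$, so the two ends land on opposite sides of $(A,B)$.

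The main obstacle is the forward direction, whose correctness rests on the structural characterization of tight cuts in bipartite matching covered graphs appearing later in the paper; once that is in hand, the contraction vertex can be placed unambiguously and the induction runs smoothly. The backward direction is essentially bookkeeping, though one must be careful to align the two bipartitions, via the choices $x\in A_1$ and $\xbar\in B_2$, so that they agree across every edge of $\partial(X)$.
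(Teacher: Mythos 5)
Your proof is correct and takes essentially the approach the paper intends: the paper leaves this proposition as an exercise, citing \cref{characterization of tight cuts in bmcgs}, whose stated consequence (both contractions of a tight cut of a bipartite matching covered graph are bipartite) is exactly your forward direction, iterated down the decomposition. Your backward direction --- inducting on a nontrivial tight cut and gluing the two bipartitions, aligned via the placement of the contraction vertices $x$ and $\xbar$ on opposite sides --- is the natural completion and is carried out correctly.
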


Interestingly, for \cc2s, the following is well-known; see \cite[Theorem 5.8]{lumu24}.

\begin{proposition}\label{every tight cut of a cc2 is a 3-cut}
	Every tight cut of a \cc2 is a $3$-cut.
\end{proposition}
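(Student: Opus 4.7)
Let $C = \partial(X)$ be a tight cut of the \cc2 $G$. The plan is to bound $|C|$ from below via parity and connectivity, and from above by invoking the barrier structure of matching covered graphs.

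For the lower bound, I would observe that Sch\"onberger's result makes $G$ matching covered, so it admits a perfect matching $M$. By tightness, $|M \cap C| = 1$: writing $M \cap C = \{uv\}$ with $u \in X$ and $v \in \bar{X}$, the restriction $M \setminus \{uv\}$ induces a perfect matching of $G[X \setminus \{u\}]$, forcing $|X|$ to be odd. Applying \paritylemma\ to $X$ (whose vertices all have odd degree 3) yields $|C| \equiv |X| \equiv 1 \pmod 2$, so $|C|$ is odd. Combined with $|C| \geq \kappa'(G) = \kappa(G) \geq 2$ from \cref{for cubic vtx-conn = edge-conn} and 2-connectivity, this gives $|C| \geq 3$.

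For the upper bound $|C| \leq 3$, I would first establish the following sub-lemma: for any barrier $B$ of $G$ and any odd component $K$ of $G - B$, $|\partial_G(V(K))| = 3$. This follows because $B$ is stable by \cref{when is G-u-v matchable} (so each of the $3|B|$ edges at $B$ goes to components of $G - B$); none of these edges can go to an even component (otherwise such an edge lies in no perfect matching, since the barrier vertices must be matched into distinct odd components in every perfect matching, contradicting matching coverdness); hence all $3|B|$ edges are distributed among the $|B|$ odd components. By \paritylemma\ and 2-connectivity, each odd component $K$ satisfies $|\partial_G(V(K))|$ odd and $\geq 2$, hence $\geq 3$; summing forces equality throughout.

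To conclude $|C| = 3$, I would associate $C$ with a single odd component of $G$ minus some barrier. The key assertion is that for the tight cut $C = \partial(X)$, either $X$ or $\bar{X}$ is the vertex set of a single odd component $K$ of $G - B$ for some barrier $B$ of $G$; then $|C| = |\partial_G(V(K))| = 3$ by the sub-lemma. The main obstacle is this association. The underlying principle is that if $X$ were a union of $m \geq 2$ odd components of $G - B$, then by matching coverdness one could construct a perfect matching of $G$ using one $C$-edge per component, giving $|M \cap C| \geq m \geq 2$ and contradicting tightness. Formally justifying this uses the classical barrier-based characterization of tight cuts in matching covered graphs.
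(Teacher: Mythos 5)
Your lower bound $|C|\geqslant 3$ is correct, and so is your sub-lemma that every barrier cut of a \cc2 is a $3$-cut (the paper itself records this as a ``straightforward counting argument'' and as a special case of the proposition). The gap is the final step: the assertion that for every tight cut $\partial(X)$, one of $X$ and $\Xbar$ is an odd component of $G-B$ for some barrier $B$. There is no ``classical barrier-based characterization of tight cuts'' to invoke: the Edmonds--Lov\'asz--Pulleyblank result is existential (a matching covered graph with a nontrivial tight cut has \emph{some} nontrivial tight cut that is a barrier cut or a $2$-separation cut), not universal, and in general matching covered graphs there exist tight cuts that are neither. The justification you sketch only rules out $X$ being a union of two or more components of $G-B$ \emph{after} a barrier $B$ with $N(X)\cap\Xbar\subseteq B\subseteq\Xbar$ has been produced; producing it is exactly the missing content. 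Moreover, the natural argument for ``tight $\Rightarrow$ barrier cut'' in cubic graphs is circular here: one shows that at least one contraction vertex fails to be \lm\ in its $\partial(X)$-contraction (otherwise an $\xbar$-matching of $G/\Xbar$ and an $x$-matching of $G/X$ combine into a perfect matching meeting $C$ three times, contradicting tightness) and then extracts the barrier via \cref{characterization of non lm vertices} --- but this already uses that the contraction vertices have degree three, i.e.\ that $|C|=3$. For a hypothetical tight cut with $|C|\geqslant 5$ the ends of the cut edges need not match up across $C$, the combination argument fails, and you have no way to manufacture the required barrier. So the sub-lemma is sound but the reduction to it is unproved, and that reduction is essentially equivalent to the proposition itself.

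For comparison: the paper offers no proof, citing \cite[Theorem 5.8]{lumu24} as well known. The standard argument there works for all $r$-graphs (a \cc2 is a $3$-graph, since by \paritylemma\ and $2$-edge-connectivity every odd cut has at least three edges): the vector assigning $\tfrac13$ to every edge lies in the perfect matching polytope, so it is a convex combination $\sum_i\alpha_i\chi^{M_i}$ of perfect matchings, and evaluating on a tight cut $C$ gives $|C|/3=\sum_i\alpha_i\,|M_i\cap C|=1$. In the bipartite case there is the elementary count via \cref{characterization of tight cuts in bmcgs}: all $3|X^-|$ edges leaving $X^-$ end in $X^+$, so $|C|=3|X^+|-3|X^-|=3$. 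Either of these would close your upper bound; the barrier route as written does not.
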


Consequently, we have the following strengthening of \cref{tcd uniqueness} which states that the multiplicities of edges is immaterial in the case of cubic graphs.

\begin{corollary}
	Any two applications of the tight cut decomposition procedure to a \cc2~$G$ yield the same list of cubic bricks and cubic braces. \qed
\end{corollary}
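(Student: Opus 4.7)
The plan is to deduce the corollary from \cref{tcd uniqueness} by showing that, in the cubic setting, the \emph{up to multiplicities of edges} caveat becomes vacuous. The key inputs are \cref{3-cut contractions preserve connectivity,every tight cut of a cc2 is a 3-cut}, which together imply that the tight cut decomposition procedure, when applied to a \cc{2}, never leaves the class of \cc{2}s.

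First I would argue, by induction on the number of contractions performed, that every intermediate graph appearing in any application of the tight cut decomposition procedure to $G$ is itself a \cc{2}. The base case is $G$ itself. For the inductive step, suppose $H$ is a \cc{2} arising in the procedure and $C$ is a nontrivial tight cut used to split it; by \cref{every tight cut of a cc2 is a 3-cut}, $C$ is a $3$-cut of $H$, and by \cref{3-cut contractions preserve connectivity} each of the two $C$-contractions of $H$ is again a \cc{2}. In particular, they are cubic. Hence the terminal graphs of the procedure --- the bricks and braces produced from $G$ --- are themselves cubic.

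Now \cref{tcd uniqueness} guarantees that any two applications of the procedure to $G$ yield the same multiset $\bstar(G)$ of underlying simple graphs of the bricks and braces. To upgrade this to a uniqueness statement for cubic multigraphs (thereby removing the edge-multiplicity ambiguity), I would invoke the observation noted just before the corollary that every cubic brick and every cubic brace of order at least six is simple, so such a brick or brace is equal to its underlying simple graph. The only non-simple cubic bricks and braces are therefore cubic braces of order less than six, namely $\Theta$ on two vertices and $\cfour$ on four vertices, each of which is reconstructed uniquely from its underlying simple graph together with the knowledge that it is cubic. Consequently, the multiset $\bstar(G)$ determines the multiset of cubic bricks and cubic braces as cubic multigraphs, which is precisely the desired conclusion.

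The only real obstacle is the small amount of bookkeeping about the non-simple cubic braces $\Theta$ and $\cfour$; but this has already been flagged in the paragraph preceding the corollary, so the argument is otherwise just a direct combination of \cref{3-cut contractions preserve connectivity,every tight cut of a cc2 is a 3-cut} with \cref{tcd uniqueness}.
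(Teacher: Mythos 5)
Your proposal is correct and matches the paper's intended argument: the paper derives this corollary directly from \cref{tcd uniqueness} together with \cref{every tight cut of a cc2 is a 3-cut} (all tight cuts are $3$-cuts, so every intermediate graph and hence every resulting brick and brace is cubic), and then the edge-multiplicity caveat disappears because cubic bricks and braces of order six or more are simple while $\Theta$ and $\cfour$ are recovered uniquely from their underlying simple graphs plus cubicity. You have simply spelled out the details that the paper leaves implicit.
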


Our lower bounds for $\lambda$ as well as $\rho$ feature derivatives of $\bstar(G)$.
In order to motivate them, we shall find alternative characterizations of bricks and braces, and their implications, useful.
A graph $G$, of order four or more, is \emph{bicritical} if $G-u-v$ is matchable for each pair of distinct vertices $u$~and~$v$. Edmonds, Lov\'asz and Pulleyblank \cite{elp82} established the following characterization of bricks.

\begin{theorem}\label{brick iff bicritical and 3-conn}
	A graph is a brick if and only if it is \con3 and bicritical.
\end{theorem}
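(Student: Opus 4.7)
My plan is to prove both directions of the equivalence.

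For the forward direction (brick $\Rightarrow$ 3-connected and bicritical), I would argue by contradiction using properties of barriers. If $G$ is not bicritical, then there is a pair $u,v$ with $G - u - v$ non-matchable, so by \cref{when is G-u-v matchable} some barrier $B$ contains both, giving $|B| \geqslant 2$. Since $G$ is matching covered, $B$ is stable and every component of $G - B$ is odd; counting cross edges to each odd component in any perfect matching shows that $\partial(V(D))$ is tight for every odd component $D$. Either some $D$ is non-singleton (giving a nontrivial tight cut, contradicting the brick property) or all are singletons, in which case $G$ is bipartite with bipartition $(B, V(G) - B)$, contradicting nonbipartiteness. For 3-connectedness: matching covered graphs are $2$-connected, so I only need to rule out $2$-vertex-cuts; if $\{u,v\}$ is such a cut, then by bicriticality $\codd(G - u - v) \in \{0, 2\}$, and the value $2$ would make $\{u,v\}$ a non-singleton barrier, contradicting bicriticality. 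So all components of $G - u - v$ are even; setting $X := V(C_1) \cup \{u\}$ for any such component $C_1$, a parity-driven case analysis on where $u$ and $v$ are matched in any perfect matching shows $|M \cap \partial(X)| = 1$ for every perfect matching $M$, so $\partial(X)$ is a nontrivial tight cut (both shores have size $\geqslant 3$), again contradicting the brick property.

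For the reverse direction (3-connected and bicritical $\Rightarrow$ brick), the easy parts come first. Matching coveredness follows because for any edge $uv$, the perfect matching of $G - u - v$ (supplied by bicriticality) together with $\{uv\}$ is a perfect matching of $G$ containing $uv$. Nonbipartiteness follows because a bipartite matchable graph $H[A, B]$ has $|A| = |B|$, so removing two vertices of $A$ unbalances the color classes and destroys matchability, contradicting bicriticality.

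The main obstacle is showing $G$ has no nontrivial tight cut. Suppose $\partial(X)$ is such a cut; parity forces $|X|$ and $|\overline{X}|$ to be odd, hence each $\geqslant 3$. Two key intermediate facts are: (i) for every edge $xy \in \partial(X)$, the perfect matching of $G - x - y$ (supplied by bicriticality), combined with $\{xy\}$, is a perfect matching of $G$ using a single cut edge by tightness, so the original perfect matching of $G - x - y$ has \emph{no} cross edges, whence $G[X - x]$ and $G[\overline{X} - y]$ are both matchable; (ii) 3-connectedness forces at least three vertices of $X$ to be incident to $\partial(X)$, else those vertices would form a separator of size at most $2$ between $X$ and $\overline{X}$. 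My plan is to combine (i) and (ii) to construct a perfect matching of $G$ using several edges of $\partial(X)$, violating tightness. Concretely, one picks three independent edges $x_i y_i \in \partial(X)$ and tries to extend them to a perfect matching of $G$ via matchings in $G[X - \{x_1, x_2, x_3\}]$ and $G[\overline{X} - \{y_1, y_2, y_3\}]$. Establishing the simultaneous existence of these two shore-matchings is the crux; it does not follow immediately from bicriticality of $G$ alone and requires a careful combination of alternating-path manipulations with the parity constraints imposed by tightness. This technical step is the heart of the original argument of Edmonds, Lov\'asz and Pulleyblank \cite{elp82}.
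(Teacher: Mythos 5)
The paper does not prove this statement at all: it is quoted as a known theorem of Edmonds, Lov\'asz and Pulleyblank and cited to \cite{elp82}, so there is no in-paper argument to compare against. Judged on its own terms, your attempt establishes the forward direction correctly (the barrier-counting argument showing every non-singleton odd component of $G-B$ yields a nontrivial tight cut, the bipartiteness contradiction when all components are singletons, and the tight cut $\partial(V(C_1)\cup\{u\})$ built from a $2$-vertex-cut are all sound), and the easy parts of the reverse direction (matching coveredness and nonbipartiteness from bicriticality) are fine.

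The genuine gap is exactly where you flag it: you do not prove that a \con3 bicritical graph has no nontrivial tight cut, which is the entire content of the hard direction. Your facts (i) and (ii) are true but do not combine into a proof --- knowing that $G[X-x]$ and $G[\overline{X}-y]$ are separately matchable for each cut edge $xy$, and that at least three vertices of $X$ meet $\partial(X)$, does not give you three independent cut edges $x_1y_1, x_2y_2, x_3y_3$ together with simultaneous perfect matchings of $G[X-\{x_1,x_2,x_3\}]$ and $G[\overline{X}-\{y_1,y_2,y_3\}]$; indeed these two subgraphs have odd order when $|X|$ is odd, so the extension as stated cannot even be by perfect matchings of both shores minus three vertices, and the parity bookkeeping needs to be redone. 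Conceding that ``this technical step is the heart of the original argument'' is honest, but it means the proposal is a proof outline with its central lemma missing rather than a proof. If you want a self-contained argument, the standard route is to show that a matching covered graph with a nontrivial tight cut must have either a nontrivial barrier or a $2$-vertex cut (see \cite{elp82} or \cite{lumu24}), which is then contradicted by bicriticality and \con3ness respectively; that intermediate statement is what actually has to be proved.
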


It follows from \cref{when is G-u-v matchable} that a matchable graph, of order at least four, is bicritical if and only if each of its barriers is \emph{trivial} --- that is, either singleton or empty. This observation, along with the above theorem and \cref{for cubic vtx-conn = edge-conn}, implies the following.

\begin{corollary}\label{a 2 conn cubic graph is not theta or brick iff there is a nontrivial barrier}
	A \con2 cubic graph has a nontrivial barrier if and only if it is neither $\Theta$ nor a brick. \qed
\end{corollary}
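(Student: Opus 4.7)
The plan is to chain the observation preceding \cref{brick iff bicritical and 3-conn} with that theorem and the fact that $G$ is matching covered (by Sch\"onberger, since $G$ is a \cc2), hence matchable.

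For the forward direction, if $G$ has a nontrivial barrier $B$, then $n(G) \geqslant |B| + \codd(G-B) \geqslant 4$, so $G \neq \Theta$ (whose only barriers are $\emptyset$ and singletons); and the observation yields that $G$ is not bicritical, whence \cref{brick iff bicritical and 3-conn} gives that $G$ is not a brick.

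For the converse, assume $G \neq \Theta$ and $G$ is not a brick, so $n(G) \geqslant 4$. I would first dispose of the bipartite case directly: if $G = H[A,B]$, then $|A| = |B| \geqslant 2$ and $A$ is a nontrivial barrier, because $G - A$ consists of $|B| = |A|$ isolated vertices. Otherwise $G$ is nonbipartite, and \cref{brick iff bicritical and 3-conn} says that $G$ is not bicritical or not \con3; if $G$ is not bicritical, the observation supplies a nontrivial barrier at once.

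The main obstacle is the residual case in which $G$ is bicritical but $\kappa(G) = 2$, since bicriticality precludes a barrier-based approach and \cref{brick iff bicritical and 3-conn} does not rule this case out directly. My plan is to show this case cannot occur. Using \cref{for cubic vtx-conn = edge-conn}, obtain an edge \mbox{$2$-cut} $\partial(X)$; by \paritylemma, both $|X|$ and $|\Xbar|$ are even and at least two. Write the cut edges as $e_i = u_i v_i$ with $u_i \in X$ and $v_i \in \Xbar$, and note that $u_1 \neq u_2$, for otherwise $u_1$ would separate $X - u_1$ from $\Xbar$ in $G - u_1$, contradicting $\kappa(G) \geqslant 2$; symmetrically $v_1 \neq v_2$. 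Now remove $u_1$ and $v_2$: since both cut edges are destroyed, $X - u_1$ and $\Xbar - v_2$ lie in disjoint parts of $G - u_1 - v_2$, and each of these parts has odd order ($|X|-1$ and $|\Xbar|-1$). Hence $\codd(G - u_1 - v_2) \geqslant 2$, so $G - u_1 - v_2$ is not matchable, contradicting bicriticality. This rules out the residual case and completes the argument.
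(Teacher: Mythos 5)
Your proposal is correct and fills in exactly the argument the paper intends when it says the corollary follows from the observation after \cref{when is G-u-v matchable}, \cref{brick iff bicritical and 3-conn} and \cref{for cubic vtx-conn = edge-conn}: the only substantive point is the case $\kappa(G)=2$, which you settle by passing to a $2$-edge-cut and applying the Parity Lemma, just as the citation of \cref{for cubic vtx-conn = edge-conn} suggests. The lone cosmetic difference is that you present this as showing the bicritical-but-not-\con3 case is vacuous, whereas one could equally read the same computation as directly exhibiting $\{u_1,v_2\}$ as a nontrivial barrier; the content is identical.
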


Our first matching-theoretic invariant $\B(G)$, defined as the sum of the orders of the bricks of a matching covered graph $G$, is inspired by the following easy observation.

\begin{proposition}\label{in a cubic brick every vertex is lm}
	Every vertex of a cubic brick $G$ is \lm, or equivalently, \mbox{$\lambda(G)=\B(G)=n(G)$}.
\end{proposition}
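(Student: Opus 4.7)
The plan is to prove this directly from the bicriticality characterization of bricks (\cref{brick iff bicritical and 3-conn}). Since a cubic brick $G$ is \con3 (hence simple, as noted in the remark preceding \cref{tcd uniqueness}), every vertex $v$ has three distinct neighbors; write $N(v)=\{x_1,x_2,x_3\}$. I then exploit that $G$ is bicritical: the graph $G-x_1-x_2$ has a perfect matching~$M$.

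The key observation is that in $G-x_1-x_2$, the vertex $v$ has a unique surviving neighbor, namely~$x_3$, so the edge $vx_3$ must belong to~$M$. Consequently, $M-vx_3$ is a perfect matching of $G-v-N(v)$, and adjoining to it the three edges $vx_1$, $vx_2$, $vx_3$ yields a $v$-matching of~$G$. Since $v$ was arbitrary, $\Lambda(G)=V(G)$, so $\lambda(G)=n(G)$.

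For the equality $\B(G)=n(G)$, I observe that a brick has no nontrivial tight cuts by definition, so the tight cut decomposition procedure applied to~$G$ terminates immediately and returns the single brick $G$ itself; hence the sum of the orders of the bricks is just $n(G)$.

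There is no real obstacle here: the proof is essentially a direct unpacking of the definitions together with the bicriticality characterization. An alternative route, slightly longer, would be via \cref{characterization of non lm vertices}: since every barrier of a brick is trivial (empty or a singleton), no barrier can isolate a vertex of degree three, so no vertex fails to be \lm. I prefer the constructive route above because it explicitly exhibits the $v$-matching.
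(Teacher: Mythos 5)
Your proof is correct and takes essentially the same approach as the paper: both invoke bicriticality to obtain a perfect matching $M$ of $G-x_1-x_2$ and extend it to a $v$-matching by restoring the two edges from $v$ to the deleted neighbors (the paper writes this as $M+vx_1+vx_2$, which is exactly your $(M-vx_3)+vx_1+vx_2+vx_3$). The remaining observation that $\B(G)=n(G)$ because a brick is its own sole tight cut decomposition is immediate, as you note.
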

\begin{proof}
	Since $G$ is bicritical, any vertex $v$ has three distinct neighbors $v_1,v_2$ and $v_3$, and $G-v_1-v_2$ has a perfect matching, say $M$. Note that $M + vv_1 + vv_2$ is the desired $v$-matching.
\end{proof}

We are now ready to state one of our main results that establishes a lower bound on $\lambda$ for all \cc3s.
It implies a result of \cite{clz25} which states that every \con3 nonbipartite cubic graph, distinct from $K_4$, satisfies $\lambda \geqslant 6$.

\begin{theorem}\label{lambda lower bound 3-conn - intro}
	Every \cc3\ $G$ satisfies the lower bound $\lambda(G) \geqslant \B(G)$. Furthermore, if $G$ is bipartite then $\lambda(G)=\B(G)=0$; on the other hand, if $G$ is nonbipartite, $\lambda(G) = \B(G)$ if and only if $\B(G) = n(G)$.
\end{theorem}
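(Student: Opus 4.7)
The bipartite case is immediate: \cref{lambda is 0 for bipartite graphs} gives $\lambda(G)=0$, and \cref{mcg is bipartite iff b=0} gives $b(G)=0$ and hence $\B(G)=0$. I would prove the nonbipartite case by induction on $n(G)$, with base case $G$ itself a brick, where \cref{in a cubic brick every vertex is lm} yields $\lambda(G)=n(G)=\B(G)$ (and $\B(G)=n(G)$, so the equality condition holds).

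For the inductive step, $G$ is nonbipartite and not a brick (hence not a brace), so $G$ admits a nontrivial tight cut $\partial(X)$. By \cref{every tight cut of a cc2 is a 3-cut,3-cut contractions preserve connectivity}, $\partial(X)$ is a $3$-cut and both contractions $G_1 := G/\Xbar \rightarrow \xbar$ and $G_2 := G/X \rightarrow x$ are \cc3s of smaller order. By \cref{tcd uniqueness}, $\B(G)=\B(G_1)+\B(G_2)$, and by induction $\lambda(G_i)\geq\B(G_i)$ for $i=1,2$. Applying \cref{lm vertex in a separating cut-contraction is lm in the bigger graph} on each side gives the naive bound
\[
	\lambda(G) \geq |\Lambda(G_1)\setminus\{\xbar\}| + |\Lambda(G_2)\setminus\{x\}| \geq \B(G)-2,
\]
which falls short of the target by up to two.

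The key refinement uses \paritylemma: for a $v$-matching of $G$ with $v\in X$, the intersection with $\partial(X)$ has size one or three. The size-one case is exactly what \cref{lm vertex in a separating cut-contraction is lm in the bigger graph} captures (it restricts to a $v$-matching of $G_1$); the size-three case says that $G_1$ admits a spanning subgraph in which both $v$ and $\xbar$ have degree three --- a ``$(v,\xbar)$-matching'' --- combined with an $x$-matching of $G_2$. This yields a second avenue for $v\in X$ to lie in $\Lambda(G)$, and a symmetric one on the $\Xbar$-side. I would choose $\partial(X)$ so that $G_2$ is atomic in the tight cut decomposition, that is, a brick or a brace (and not $\Theta$, since $\partial(X)$ is nontrivial). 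If $G_2$ is a brick, then $x$ is automatically \lm\ in $G_2$, so each $(v,\xbar)$-matchable vertex of $G_1$ furnishes a $v\in\Lambda(G)\cap X$; the gain needed to reach $\B(G)$ comes from showing that whenever $\xbar\in\Lambda(G_1)$, the structure around $N_{G_1}(\xbar)$ forces at least one such $v$ outside $\Lambda(G_1)$. If $G_2$ is a brace, then $\B(G_2)=0$ and the gain is obtained on the $\Xbar$-side using the $\rho$-bounds for braces established in \cref{sec: lm pairs in bipartite cubic graphs}, which supply enough $(u,x)$-matchable pairs in $G_2$ to activate the symmetric route when $\xbar\in\Lambda(G_1)$.

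The main obstacle is the brick subcase above: since $G_2$ is nonbipartite there, no $\rho$-style bound is available, and one must argue directly, using bicriticality of $G_2$ and the matching-theoretic structure of tight cuts through $\xbar$ in $G_1$, to locate the required $(v,\xbar)$-matchings. For the equality clause, the direction $\B=n\Rightarrow\lambda=\B$ follows from $\lambda\leq n$ combined with the main inequality; the converse amounts to showing that any non-$\Theta$ brace in the tight cut decomposition produces a strict gain in the induction above, forcing $\lambda>\B$ whenever $\B<n$.
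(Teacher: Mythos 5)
Your outline diverges from the paper's argument in a way that leaves the hardest step unproved. The paper does not induct over a single nontrivial tight cut: it inducts over a \emph{nontrivial barrier} $B$ (which exists by \cref{a 2 conn cubic graph is not theta or brick iff there is a nontrivial barrier}), decomposing $G$ simultaneously into the core $H[A,B]$ and all $|B|$ barrier fragments $G_1,\dots,G_{|B|}$. The payoff of this choice is exactly the control you are missing: for a $v$-matching $M$ and the components $L$ of $G-B$, the handshake identity $\sum_L |\partial_M(V(L))| = \sum_{u\in B} d_M(u)$ equals $|B|$ when $v\notin B$ and $|B|+2$ when $v\in B$, so \paritylemma{} forces \emph{every} barrier cut to meet $M$ in exactly one edge in the first case, and exactly one barrier cut to meet it in three edges in the second. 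This upgrades \cref{lm vertex in a separating cut-contraction is lm in the bigger graph} to the exact statement $\lambda(G)=\sum_i\lambda(G_i)-|A'|+|B'|$ (\cref{outside barrier lemma,inside barrier lemma}), and then $|B'|\geqslant|N_H(A')|\geqslant|A'|$ --- the last inequality from \cref{characterization of bmcgs} applied to the matching covered core, the first from \cref{each edge is lm unless it participates in a 2-cut} --- closes the bound via \cref{beta over a barrier} with no deficit at all. The same Hall-type inequality, tightened to $|N_H(A')|=|A'|$ forcing $A'=A$, drives the equality characterization.

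By contrast, your single-tight-cut induction genuinely loses up to $2$ (one contraction vertex on each side), and your plan for recovering it is not an argument but a restatement of the difficulty: ``the gain needed \dots{} comes from showing that whenever $\xbar\in\Lambda(G_1)$, the structure around $N_{G_1}(\xbar)$ forces at least one such $v$ outside $\Lambda(G_1)$'' is precisely the claim that needs proof, and you concede in the brick subcase that you do not know how to establish it. There is also a quantitative problem with the brace subcase: if $G_2$ is a brace then $\B(G_2)=0$, so the naive bound already gives $\lambda(G)\geqslant\B(G)-1$, but the remaining unit must still be found whenever $\xbar\in\Lambda(G_1)$, and the $\rho$-bounds of \cref{sec: lm pairs in bipartite cubic graphs} concern pairs of a bipartite graph $G_2$, not $(v,\xbar)$-matchings of the (possibly nonbipartite) $G_1$, so they do not by themselves activate your ``second avenue.'' Your parity observation about $|M\cap\partial(X)|\in\{1,3\}$ is correct and is the right germ of an idea --- but the way to make it bear weight is to apply it to all the barrier cuts of a nontrivial barrier at once, as the paper does, rather than to one tight cut in isolation. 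As written, the proposal establishes only $\lambda(G)\geqslant\B(G)-2$.
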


A proof of the above appears in \cref{sec: lm vertices in cubic graphs} wherein we also provide a structural characterization, using splicing, of those graphs that attain the lower bound with equality.
We remark that the same bound applies to all \cc2s; however, the characterization of tight examples requires an additional invariant that we are yet to define.
For instance, it is easy to verify that $\Lambda=X$ for the graph shown in \cref{fig: k4 glue k33}; therefore, it satisfies $\lambda=\beta=4$, but clearly, $n > \lambda$.

\fig{A \cc2 with $4 = \lambda = \beta < n = 10$}{fig: k4 glue k33}{\begin{tikzpicture}[scale=1.75,xscale=-1]

\foreach \i in {0,1,2} {
	\node[vtx] (a\i) at (\i,1) {};
	\node[vtx] (b\i) at (\i,0) {};
}
\tikzmath{
	\x = 0.5 / tan(30);
}
\node[vtx] (v1) at (3,1) {};
\node[vtx] (v2) at (3,0) {};
\node[vtx] (v3) at (3+\x,0.5) {};
\node[vtx] (v4) at (3+0.33*\x,0.5) {};

\draw (v1) -- (v4);
\draw (v2) -- (v4);
\draw (v3) -- (v1);
\draw (v3) -- (v2);
\draw (v3) -- (v4);

\foreach \i in {0,1,2} {
	\foreach \j in {0,1,2} {
		\ifnum \i=2
			\ifnum \j=2
				\draw (a\i) -- (v1);
				\draw (b\j) -- (v2);
			\else
				\draw (a\i) -- (b\j);
			\fi
		\else
			\draw (a\i) -- (b\j);
		\fi
	}
}

\draw[thick, magenta] ($(v1)!0.5!(a2)+(0,0.25)$) -- ($(v2)!0.5!(b2)+(0,-0.25)$);
\node[magenta] at ($(v1)!0.5!(a2)+(0.075,0.425)$) {$\partial(X)$};
\node at (3+0.7*\x,1) {$X$};

\end{tikzpicture}}

In $K_{3,3} \odot K_4$, shown in \cref{fig: k33 splice k4}, $n = 8 > \lambda(G) = 6 > \B(G) = 4$.
\cref{fig: k33 spliced with 3 k4s} shows a \cc3 that is not a brick but satisfies $\lambda = \B = n$. The reader may use the tight cuts $C_1,C_2$ and $C_3$ to compute the value of $\B$.

\fig{The smallest \cc3 that is not a brick but satisfies $\lambda=\B=n$}{fig: k33 spliced with 3 k4s}{\begin{tikzpicture}[scale=1.6]

\tikzmath{
	\height = 0.7;
	\halfsidelength = sin(30)*\height;
	\sidelength = 2*\halfsidelength;
	\offset = 0.3;
	\y = -0.7;
}


\foreach \i in {1,2,3}{
	\node[vtx] (b\i) at (1.25*\i,0.825) {};
	\tikzmath{ \x = 1.25*\i+\offset*(\i-2); }
	\node[vtx] (a\i2) at (\x,\y+2*\height/3) {};
	\node[vtx] (a\i1) at (\x-\halfsidelength, \y-\height/3) {};
	\node[vtx] (a\i3) at (\x+\halfsidelength, \y-\height/3) {};
}


\foreach \i in {1,2,3}{
	\draw (b2) -- (a\i2);
	\draw (a\i1) -- (a\i2) -- (a\i3) -- (a\i1);
}
\draw (a21) -- (b1);
\draw (a23) -- (b3);

\draw (a11) to[out=90+0, in=180+45] (b1);
\draw (a33) to[out=90-0, in=360-45] (b3);

\draw plot [smooth,tension=1.1] coordinates {(a13) ($(a22)!0.6!(b2)$) (b3)};
\draw plot [smooth,tension=1.1] coordinates {(a31) ($(a22)!0.6!(b2)$) (b1)};


\draw[thick, magenta] plot [smooth,tension=1.1] coordinates {
	($(a11)+(-0.2,1)$)
	($(a12)!0.2!(b2)$)
	($(a13)+(0.3,0.05)$)
};
\node[magenta] at ($(a11)+(-0.4,0.92)$) {$C_1$};
\draw[thick, violet!90] plot [smooth,tension=1.1] coordinates {
	($(a33)+(0.2,1)$)
	($(a32)!0.2!(b2)$)
	($(a31)+(-0.3,0.05)$)
};
\node[violet!90] at ($(a33)+(0.4,0.92)$) {$C_3$};
\draw[thick, orange] plot [smooth, tension=1.1] coordinates {
	($(a21)+(-0.3,0.1)$)
	($(a22)+(0,0.25)$)
	($(a23)+(0.3,0.1)$)
};
\node[orange] at ($(a21)+(-0.28,-0.075)$) {$C_2$};

\end{tikzpicture}}

It is worth noting that there are \cc3s that satisfy $\lambda=n$ and $\lambda > \beta$.
For instance, consider the graph obtained from $K_{3,3} \odot K_{3,3}$, shown in \cref{fig: k33 splice k33}, by splicing with $K_4$'s at each vertex of $A' \cup B'$.
For the graph thus obtained, shown in \cref{fig: lambda > beta example}, $\lambda=n=18$ whereas $\B=16$.
This is particularly interesting in light of the last part of \cref{lambda lower bound 3-conn - intro}.
This inspired us to characterize all \cc2s that satisfy $\lambda=n$, and we do this in \cref{sec: lm vertices in cubic graphs}, thus resolving a problem posed by Chen, Lu and Zhang \cite[Problem 2]{clz25}.

\fig{The smallest \con3 cubic graph with $n = \lambda > \beta$}{fig: lambda > beta example}{\begin{tikzpicture}[scale=1.75]
\begin{scope}[rotate=90]
	\tikzmath{
			\sidelength = 0.7;
			\halfsidelength = \sidelength/2;
			\height = sin(60)*\sidelength;
			\offset = 0.65;
			\xcoord = 1;
			\ycoord = 0.4;
		}
	\coordinate (b11) at (-\offset + \xcoord-\halfsidelength,\ycoord-\height);
	\coordinate (b12) at (-\offset + \xcoord,\ycoord);
	\coordinate (b13) at (-\offset + \xcoord+\halfsidelength,\ycoord-\height);

	\coordinate (b21) at (\offset + \xcoord-\halfsidelength,\ycoord-\height);
	\coordinate (b22) at (\offset + \xcoord,\ycoord);
	\coordinate (b23) at (\offset + \xcoord+\halfsidelength,\ycoord-\height);

	\coordinate (a1) at (1 - 1, 1);
	\coordinate (a2) at (1,1);
	\coordinate (a3) at (1 + 1, 1);

	\draw (b11) -- (b12) -- (b13) -- (b11);
	\draw (b21) -- (b22) -- (b23) -- (b21);
	\draw (b11) -- (a1);
	\draw (b12) -- (a2);
	\draw (b13) -- (a3);
	\draw (b21) -- (a1);
	\draw (b22) -- (a2);
	\draw (b23) -- (a3);

	\begin{scope}[yshift=74, yscale=-1]
		\tikzmath{
			\sidelength = 0.7;
			\halfsidelength = \sidelength/2;
			\height = sin(60)*\sidelength;
			\offset = 0.65;
			\xcoord = 1;
			\ycoord = 0.4;
		}
		\coordinate (2b11) at (-\offset + \xcoord-\halfsidelength,\ycoord-\height);
		\coordinate (2b12) at (-\offset + \xcoord,\ycoord);
		\coordinate (2b13) at (-\offset + \xcoord+\halfsidelength,\ycoord-\height);

		\coordinate (2b21) at (\offset + \xcoord-\halfsidelength,\ycoord-\height);
		\coordinate (2b22) at (\offset + \xcoord,\ycoord);
		\coordinate (2b23) at (\offset + \xcoord+\halfsidelength,\ycoord-\height);

		\coordinate (2a1) at (1 - 1, 1);
		\coordinate (2a2) at (1,1);
		\coordinate (2a3) at (1 + 1, 1);

		\draw (2b11) -- (2b12) -- (2b13) -- (2b11);
		\draw (2b21) -- (2b22) -- (2b23) -- (2b21);
		\draw (2b11) -- (2a1);
		\draw (2b12) -- (2a2);
		\draw (2b13) -- (2a3);
		\draw (2b21) -- (2a1);
		\draw (2b22) -- (2a2);
		\draw (2b23) -- (2a3);
	\end{scope}

	\draw (a1) -- (2a1);
	\draw (a2) -- (2a2);
	\draw (a3) -- (2a3);

	\draw (a1)node[vtx]{} (a2)node[vtx]{} (a3)node[vtx]{} (b11)node[vtx]{} (b12)node[vtx]{} (b13)node[vtx]{} (b21)node[vtx]{} (b22)node[vtx]{} (b23)node[vtx]{};

	\draw (2a1)node[vtx]{} (2a2)node[vtx]{} (2a3)node[vtx]{} (2b11)node[vtx]{} (2b12)node[vtx]{} (2b13)node[vtx]{} (2b21)node[vtx]{} (2b22)node[vtx]{} (2b23)node[vtx]{};
\end{scope}
\end{tikzpicture}}

We now switch our attention to bipartite graphs.
We begin by stating characterizations of \bmcg s that are well-known and easily proved using Hall's Theorem; see \cite[Theorems 2.9 and 3.8]{lumu24}.

\begin{proposition}\label{characterization of bmcgs}
	For a bipartite matchable graph $H[A,B]$, the following are equivalent:
	\begin{enumerate}[label=(\roman*)]
		\item $H$ is matching covered,
		\item $H-a-b$ is matchable for every pair $a \in A$ and $b \in B$, and
		\item $|N_H(A')| \geqslant |A'|+1$ for every nonempty $A' \subset A$. \qed
	\end{enumerate}
\end{proposition}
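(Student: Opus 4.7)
The plan is to establish the three-way equivalence along the cyclic chain $(i) \Rightarrow (iii) \Rightarrow (ii) \Rightarrow (i)$, with Hall's Theorem as the universal tool. All three implications are short; the meatier step is $(i) \Rightarrow (iii)$, where strict inequality in the Hall condition has to be extracted from the assumption that every edge lies in some perfect matching.

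For $(i) \Rightarrow (iii)$, I would argue by contradiction. Suppose some nonempty proper $A' \subset A$ saturates the Hall inequality, i.e.\ $|N_H(A')| = |A'|$, and set $B' := N_H(A')$. Matchability of $H$ forces every perfect matching to pair $A'$ bijectively with $B'$, so any edge with one end in $A - A'$ and the other in $B'$ lies in no perfect matching. It therefore suffices to exhibit such an edge: otherwise $A' \cup B'$ would be a union of components of $H$, contradicting the connectedness that is built into the definition of matching covered (together with $A' \neq A$).

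For $(iii) \Rightarrow (ii)$, I would fix $a \in A$ and $b \in B$, set $A^* := A - a$ and $B^* := B - b$, and verify Hall's condition for $A^*$ inside $H - a - b$. Any $A' \subseteq A^*$ is automatically a nonempty proper subset of $A$ (since $a \notin A'$), so (iii) yields $|N_H(A')| \geq |A'| + 1$, and deleting $b$ from the universe shrinks this neighborhood by at most one. Hall's Theorem then produces a matching of $H - a - b$ saturating $A^*$, and since $|A^*| = |B^*|$ this matching is perfect.

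Finally, $(ii) \Rightarrow (i)$ is immediate: for any edge $ab$, extending a perfect matching of $H - a - b$ by $ab$ gives a perfect matching of $H$ through that edge. The connectedness clause in the definition of matching covered can be recovered from (ii) directly: were $H$ to split into two components, one could pick $a$ and $b$ from distinct components so that $H - a - b$ contained a bipartite component with unbalanced color classes, violating matchability. The only bookkeeping required throughout is to confirm that the set $A'$ in each Hall argument is a proper nonempty subset of $A$, which is automatic in every instance.
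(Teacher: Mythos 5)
Your proof is correct, and it uses exactly the route the paper indicates (the paper itself omits the proof, citing it as well-known and easily derived from Hall's Theorem, with a pointer to Lucchesi--Murty). All three implications check out, including the connectedness bookkeeping in $(i)\Rightarrow(iii)$ and $(ii)\Rightarrow(i)$, where you correctly use that components of a matchable bipartite graph are balanced.
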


Recall that a brace is a \bmcg\ that is free of nontrivial tight cuts. They admit similar sounding characterizations that may be easily deduced from \cref{characterization of tight cuts in bmcgs}; see \cite[Theorems~5.16~and~5.17]{lumu24}.

\begin{proposition}\label{characterization of braces}
	For a \bmcg\ $H[A,B]$, the following are equivalent:
	\begin{enumerate}[label=(\roman*)]
		\item $H$ is a brace,
		\item $H-a_1-a_2-b_1-b_2$ is matchable for distinct $a_1,a_2 \in A$ and $b_1,b_2 \in B$, and
		\item $|N_H(A')| \geqslant |A'|+2$ for every nonempty subset $A'$ of $A$ such that $|A'| \leqslant |A|-2$. \qed
	\end{enumerate}
\end{proposition}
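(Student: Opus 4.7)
The plan is to prove the three equivalences by routing everything through condition (iii), which is the most concrete Hall-type inequality. I will establish (iii) $\Leftrightarrow$ (ii) by Hall's theorem in the same spirit as the proof of \cref{characterization of bmcgs}, and (iii) $\Leftrightarrow$ (i) by invoking \cref{characterization of tight cuts in bmcgs}.

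For (iii) $\Rightarrow$ (ii), I would fix any distinct $a_1,a_2 \in A$ and distinct $b_1,b_2 \in B$ and verify Hall's condition for $H' := H - a_1 - a_2 - b_1 - b_2$: every nonempty $A'' \subseteq A \setminus \{a_1,a_2\}$ has $|A''| \leq |A|-2$, so (iii) gives $|N_H(A'')| \geq |A''|+2$ and hence $|N_{H'}(A'')| \geq |A''|$. Before running this argument on the $B$-side, I would first observe that (iii) is symmetric between $A$ and $B$: if it failed for some $B' \subseteq B$ with $1 \leq |B'| \leq |B|-2$ and $|N_H(B')| \leq |B'|+1$, then setting $A' := A \setminus N_H(B')$ and using $|A|=|B|$ together with \cref{characterization of bmcgs}(iii) produces a subset contradicting (iii) on the $A$-side. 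For the converse (ii) $\Rightarrow$ (iii), I argue contrapositively: if some nonempty $A' \subsetneq A$ with $|A'| \leq |A|-2$ violates (iii), matching-coveredness forces $|N_H(A')| = |A'|+1$, and then deleting any two $a_1,a_2 \in A \setminus A'$ together with any two $b_1,b_2 \in N_H(A')$ leaves $A'$ with only $|A'|-1$ neighbors, so $H - a_1 - a_2 - b_1 - b_2$ fails Hall and is not matchable.

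For (i) $\Leftrightarrow$ (iii), I would invoke \cref{characterization of tight cuts in bmcgs}, which should set up a correspondence between nontrivial tight cuts of $H$ and nonempty proper subsets $A' \subseteq A$ satisfying $|N_H(A')|=|A'|+1$, where the two shores of the cut are $A' \cup N_H(A')$ and its complement. Nontriviality of $\partial(X)$ translates precisely into $1 \leq |A'| \leq |A|-2$, so the absence of nontrivial tight cuts is exactly condition (iii). For self-containedness one verifies directly that $\partial(A' \cup N_H(A'))$ is tight, since in any perfect matching the $|A'|$ vertices of $A'$ are matched into $N_H(A')$, leaving precisely one vertex of $N_H(A')$ to be matched across the cut to $A \setminus A'$.

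The main obstacle is really just bookkeeping: matching the tight-cut data to the Hall deficiency sets, and handling the $B$-side Hall condition via the symmetry lemma above. All of the matching-theoretic content is imported wholesale from \cref{characterization of bmcgs} and \cref{characterization of tight cuts in bmcgs}; no further ingredients are required.
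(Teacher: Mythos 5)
Your proof is correct, and it follows exactly the route the paper indicates: the paper itself offers no proof of this proposition (it cites Theorems 5.16 and 5.17 of Lucchesi--Murty and remarks that the equivalences "may be easily deduced from" \cref{characterization of tight cuts in bmcgs}), and your argument supplies precisely that deduction, pairing the Hall-type computation with the tight-cut correspondence. The only cosmetic remark is that your $A$--$B$ symmetry lemma is not actually needed for (iii)\,$\Rightarrow$\,(ii), since Hall's condition on one side of a balanced bipartite graph already suffices, and in (i)\,$\Leftrightarrow$\,(iii) a tight cut whose deficient shore-part lies in $B$ can be handled by passing to the complementary shore $\overline{X}$ rather than by symmetry.
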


In the case of connected bipartite cubic graphs, one might hope to prove a lower bound on the number of \lm\ pairs that is quadratic in the order~$n$.
However, as we shall see later, such a bound does not exist even for \bcc3s; see \cref{rho lower bound 3-conn - easy direction}. 
Inspired by \cref{in a simple brace every pair is lm} stated below, we define the invariant $\b$ of a matching covered graph $G$ as follows:
\[ \b(G) := \sum \left(\frac{n(H)}{2}\right)^2 \]
wherein the sum is over all braces $H$ (of $G$) of order six or more.

\begin{proposition}\label{in a simple brace every pair is lm}
	Every pair $(a,b)$ of a cubic brace $H[A,B]$ distinct from $\cfour$, where $a \in A$ and $b \in B$, is \lm, or equivalently, if $H \neq \Theta$ then $\rho(H)=\b(H)$.
\end{proposition}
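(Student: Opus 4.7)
The plan is to work with the equivalent characterization noted earlier in the text: for a cubic brace $H \neq \cfour$, the pair $(a,b)$ is \lm\ if and only if $H':=H-a-b-N(a)-N(b)$ is matchable. Since $H$ is bipartite, $N(a) \subseteq B$ and $N(b) \subseteq A$ are disjoint, and each consists of three distinct vertices by \cref{characterization of braces}(iii) applied to singletons (using $|A|\geqslant 3$ since $H \neq \Theta, \cfour$). Writing $A^-:=A \setminus (\{a\} \cup N(b))$ and $B^-:=B \setminus (\{b\} \cup N(a))$, a quick count gives $|A^-|=|B^-|$, with common value $|A|-3$ if $a \sim b$ and $|A|-4$ otherwise. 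When this value is zero---in particular for $H=K_{3,3}$, and for the boundary case $n=8$ with $a \not\sim b$---$H'$ is the empty graph and is vacuously matchable. So I may henceforth assume $|A^-| \geqslant 1$, which forces $n \geqslant 8$ and $|A| \geqslant 4$.

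The heart of the argument is Hall's Theorem applied to the balanced bipartite graph $H'[A^-,B^-]$. Assume for contradiction that $S \subseteq A^-$ is a nonempty Hall violator, i.e.\ $|N_{H'}(S)|<|S|$. Two inequalities pin down the structure. First, $b \notin N_H(S)$ since $S \cap N(b) = \emptyset$ by construction of $A^-$, so $N_H(S) \setminus N_{H'}(S) \subseteq N(a)$, giving $|N_H(S)| \leqslant |N_{H'}(S)|+3$. Second, because $1 \leqslant |S| \leqslant |A|-3 \leqslant |A|-2$, \cref{characterization of braces}(iii) applied to $S$ yields $|N_H(S)| \geqslant |S|+2$. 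Combining, $|N_{H'}(S)| \geqslant |S|-1$; together with $|N_{H'}(S)|<|S|$ this forces every inequality to be tight. In particular, $|N_H(S)|=|S|+2$ and $N(a) \subseteq N_H(S)$.

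The decisive step is an augmentation. Set $S':=S \cup \{a\}$, so $|S'|=|S|+1$. Since $N_H(a)=N(a) \subseteq N_H(S)$, we have $N_H(S')=N_H(S)$, giving $|N_H(S')|=|S|+2=|S'|+1$. But $1 \leqslant |S'| \leqslant |A|-2$, so \cref{characterization of braces}(iii) applied to $S'$ demands $|N_H(S')| \geqslant |S'|+2$, a contradiction. Thus $H'$ is matchable, and every pair in $A \times B$ is \lm. The ``or equivalently'' formulation then follows at once: under the standing hypotheses $H \neq \Theta, \cfour$, we have $\rho(H)=|A|\cdot|B|=(n/2)^2=\b(H)$.

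The main obstacle, I expect, is spotting the augmentation trick. The direct Hall computation yields only $|N_{H'}(S)| \geqslant |S|-1$, which is not by itself a contradiction; one must observe that equality in this chain forces $N(a)$ to sit inside $N_H(S)$, so appending $a$ to $S$ costs nothing in the neighbourhood count and exposes a second, fatal violation of the brace condition. A small but essential bookkeeping check is that the index constraint $|S'| \leqslant |A|-2$ remains valid in the boundary case $n=8$.
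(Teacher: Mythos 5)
Your argument is correct, but it takes a genuinely different route from the paper's. The paper's proof is a three\mbox{-}line direct construction from \cref{characterization of braces}~$(ii)$: choose distinct $a_1,a_2 \in N_H(b)-a$ and $b_1,b_2 \in N_H(a)-b$, take a perfect matching $M$ of $H-a_1-a_2-b_1-b_2$, and observe that $M+ab_1+ab_2+ba_1+ba_2$ is an $(a,b)$-matching; this works uniformly whether or not $a$ and $b$ are adjacent and stays entirely within the spanning-subgraph definition of \lmity. You instead pass to the Chen--Lu--Zhang reformulation ($(a,b)$ is \lm\ if and only if $H-a-b-N(a)-N(b)$ is matchable --- stated but not proved in the paper) and verify Hall's condition in the balanced graph $H'[A^-,B^-]$ using the surplus characterization \cref{characterization of braces}~$(iii)$; your bookkeeping is sound (the bounds $|S|\leqslant|A|-3$ and $|S'|\leqslant|A|-2$, the tightness forcing $N(a)\subseteq N_H(S)$, and the balancedness $|A^-|=|B^-|$ all check out), and the augmentation $S\mapsto S\cup\{a\}$ is exactly the right move to convert the near-violation into a genuine violation of the brace condition. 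What your approach costs is length and the dependence on the unproved equivalence of the two definitions; what it buys is, in effect, a hands-on derivation of the relevant instance of $(iii)\Rightarrow(ii)$ that the paper simply cites. Two minor remarks: the statement also covers $H=\Theta$, whose unique pair is trivially \lm\ via its three parallel edges --- you set $\Theta$ aside together with $\cfour$ without comment, whereas the paper's proof disposes of it in its opening sentence (noting $\rho(\Theta)=1$ but $\b(\Theta)=0$ for the second formulation); and your appeal to \cref{characterization of braces}~$(iii)$ for the three distinct neighbours of $b$ needs the symmetric version for subsets of $B$, which is immediate since cubic braces of order six or more are simple.
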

\begin{proof}
	Observe that $\rho(\Theta)=1$ whereas $\b(\Theta)=0$. Now suppose that $H$ is a cubic brace of order six or more; in particular, it is simple.
	By \cref{characterization of braces} $(ii)$, the graph $H-a_1-a_2-b_1-b_2$ has a perfect matching~$M$ for distinct $a_1,a_2 \in N_H(b)-a$ and $b_1,b_2 \in N_H(a)-b$.
	Observe that $M+ab_1+ab_2+ba_1+ba_2$ is an $(a,b)$-matching in $H$.
\end{proof}

We are now ready to state our lower bounds for the number of \lm\ pairs in \bcc3s;
this, along with \cref{lm pairs across a 2-cut}, implies \cite[Corollary 4.2]{clz25} which states that every simple connected bipartite cubic graph satisfies $\rho \geqslant 9$ and equality holds if and only if it is $K_{3,3}$.

\begin{theorem}\label{rho lower bound 3-conn - intro}
	Every \bcc3\ $H$ satisfies:
	\[ \rho(H) ~\geqslant~ \b(H) + 3b'(H) - 3 ~\geqslant~ 3n(H) -9 \]
\end{theorem}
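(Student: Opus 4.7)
The plan is to prove the two inequalities in turn, with the second being a direct algebraic consequence of the structure of the tight cut decomposition. If $H = \Theta$ the second inequality $\b(H) + 3b'(H) - 3 \geqslant 3n(H) - 9$ reduces to $-3 \geqslant -3$. Otherwise, by \cref{3-cut contractions preserve connectivity} every piece of the decomposition is 3-connected, and since the only cubic brace of order four is $\cfour$, which is not 3-connected, every piece $K_i$ has even order $n_i \geqslant 6$. Using $n(H) = \sum_i n_i - 2(b'(H) - 1)$ and $\b(H) = \sum_i (n_i/2)^2$, a direct substitution gives $\b(H) + 3b'(H) - 3 - (3n(H) - 9) = \sum_i (n_i/2 - 3)^2 \geqslant 0$.

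For the main inequality $\rho(H) \geqslant \b(H) + 3b'(H) - 3$, I would induct on $n(H)$, simultaneously establishing an auxiliary claim described below. The base case is when $H$ is itself a brace: for $H = \Theta$ the bound is vacuous, and for a brace of order at least six \cref{in a simple brace every pair is lm} gives $\rho(H) = (n/2)^2 = \b(H)$ with $b'(H) = 1$, so equality holds. For the inductive step, fix a nontrivial tight cut $\partial(X)$; by \cref{3-cut contractions preserve connectivity} both cut-contractions $H/X$ and $H/\Xbar$ are 3-connected cubic bipartite graphs of order at least six, to which the induction applies. Using the characterization of tight cuts in bipartite matching covered graphs (\cref{characterization of tight cuts in bmcgs}), after relabelling I may assume that the three edges of $\partial(X)$ all run from $X \cap A$ to $\Xbar \cap B$; this forces $|X \cap A| = |X \cap B| + 1$ and $|\Xbar \cap B| = |\Xbar \cap A| + 1$.

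The heart of the proof is the identity
\[ \rho(H) ~=~ \rho(H/\Xbar) + \rho(H/X) + (\alpha - 1)(\gamma - 1) - 1, \]
where $\alpha := \numlmpartners{H/\Xbar}{\xbar}$ and $\gamma := \numlmpartners{H/X}{x}$ count the lm partners of the two contraction vertices. To establish it I would partition the lm pairs of $H$ by the shore containing each endpoint. For pairs $(a,b)$ with both endpoints in $X$, \paritylemma\ forces exactly one edge of $\partial(X)$ into any \mbox{$(a,b)$-matching}, and a pair-analogue of \cref{lm vertex in a separating cut-contraction is lm in the bigger graph} (stitching together matchings across the separating cut) yields a bijection with lm pairs of $H/\Xbar$ that avoid $\xbar$, contributing $\rho(H/\Xbar) - \alpha$; symmetrically the $\Xbar$-side contributes $\rho(H/X) - \gamma$. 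For pairs crossing the cut, \paritylemma\ forces \emph{all three} cut edges into the matching, and a degree-balance computation on each color class rules out the orientation $a \in \Xbar \cap A$, $b \in X \cap B$ altogether; the other orientation contributes a product $\alpha \gamma$, since each such lm pair in $H$ decomposes uniquely into an lm partner of $\xbar$ in $H/\Xbar$ and an lm partner of $x$ in $H/X$.

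Applying the induction hypothesis and the additivity of $\b$ and $b'$ under the decomposition (via \cref{tcd uniqueness}), $\rho(H/\Xbar) + \rho(H/X) \geqslant \b(H) + 3b'(H) - 6$. To close the gap I need $(\alpha - 1)(\gamma - 1) \geqslant 4$, which reduces to the auxiliary claim: in every 3-connected cubic bipartite graph of order at least six, each vertex has at least three lm partners. For braces this is immediate from \cref{in a simple brace every pair is lm} since $n/2 \geqslant 3$; for the inductive step the same tight-cut analysis shows that $\numlmpartners{H}{u}$ equals either $\numlmpartners{H/\Xbar}{u}$ or $\numlmpartners{H/\Xbar}{u} - 1 + \numlmpartners{H/X}{x}$, both of which are at least three by induction. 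I expect the main obstacle to be the parity-based case analysis for the crossing pairs, particularly the color-balance argument that annihilates one of the two crossing orientations entirely.
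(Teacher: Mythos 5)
Your proposal is correct, and its backbone is exactly the paper's argument: induction over a nontrivial tight cut $\partial(X)$, the counting identity $\rho(H)=\rho(H_1)+\rho(H_2)+\alpha\gamma-\alpha-\gamma$ (your $(\alpha-1)(\gamma-1)-1$ is the same quantity, and it is precisely \cref{lm pairs across a tight cut - count version}, proved from the three-case analysis of \cref{lm pairs across a tight cut - lemma version}), and the closing step $\alpha,\gamma\geqslant 3$ together with \cref{technical lemma - lower bound on pq-p-q}. You deviate in two sub-arguments, both legitimately. First, for the inequality $\b+3b'-3\geqslant 3n-9$ you give a closed-form identity, $\b(H)+3b'(H)-3-(3n(H)-9)=\sum_i\left(n_i/2-3\right)^2$ summed over the braces of the decomposition, using that every piece of a \bcc3\ distinct from $\Theta$ is a $3$-connected cubic brace and hence of order at least six; the paper instead threads this inequality through the same induction. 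Your version is slicker and makes the equality case (all braces equal to $K_{3,3}$) visible at a glance. Second, for the auxiliary claim that every vertex of a \bcc3\ of order at least six has at least three \lm\ partners, the paper gets this in one line from \cref{each edge is lm unless it participates in a 2-cut} (a $3$-connected cubic graph has no $2$-cuts, and each vertex has three distinct neighbours), whereas you run a second induction through the tight-cut decomposition; your induction is sound but heavier than necessary. One point of attribution to tidy up: for crossing pairs, \paritylemma\ alone only forces an \emph{odd} number (one or three) of cut edges into an \mbox{$(a,b)$-matching}; pinning this down to exactly three, and annihilating the orientation with both ends in the smaller parts $X^-$ and $\Xbar^-$, both require the colour-class degree balance in the contractions combined with the absence of edges between $X^-$ and $\Xbar^-$ from \cref{characterization of tight cuts in bmcgs} --- computations you gesture at and which do go through, so this is an imprecision of emphasis rather than a gap.
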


A proof of the above appears in \cref{sec: lm pairs in bipartite cubic graphs} wherein we also provide structural characterizations, using splicing, of those graphs that attain either lower bound with equality. \cref{fig: example for lower bound on rho} shows a few examples; in each of these graphs, the only tight cut $C$ is shown, and the set $A' \times B'$ comprises the only non \lm\ pairs.

\fig{The graphs $K_{3,3} \odot K_{3,3}$, \emph{cube} $\odot\;K_{3,3}$ and \emph{cube} $\odot$ \emph{cube}}{fig: example for lower bound on rho}{
\subfig
	{0.45,0.55}
	{$\rho ~=~ \b + 3b' - 3 ~=~ 3n - 9 ~=~ 21$ , $30 ~=~ \rho ~>~ \b + 3b' - 3 ~=~ 28 ~>~ 3n - 9 ~=~ 27$}
	{fig: k33 splice k33,fig: cube splice k33}
	{k33-splice-k33.tikz,cube-splice-k33.tikz}
\\ \bigskip 
\subfig
	{1}
	{$40 ~=~ \rho ~>~ \b + 3b' - 3 ~=~ 35 ~>~ 3n - 9 ~=~ 33$}
	{fig: cube splice cube}
	{cube-splice-cube.tikz}
}

It is worth noting that the aforementioned bounds on $\rho$ do not hold for all connected bipartite cubic graphs.
For instance, for the graph shown in \cref{fig: k33 glue theta glue k33}, $P= \{(a,b)\} \cup (A_1 \times B_1) \cup (A_2 \times B_2)$; whence, $\rho=19$ whereas, $\b+3b'-3=21$ and $3n-9=33$.

\fig{A connected bipartite cubic graph whose \con3 pieces are $\Theta$ and two copies of $K_{3,3}$}{fig: k33 glue theta glue k33}{\begin{tikzpicture}[scale=1.5]

	\foreach \i in {0,...,6}{
		\node[vtx-white] (a\i) at (\i,0) {};
		\node[vtx] (b\i) at (\i,1) {};
	}

	\foreach \i in {0,1,2}{
		\foreach \j in {0,1,2}{
			\ifnum \i=2
				\ifnum \j=2
					\draw (a\i) -- (b3);
					\draw (b\j) -- (a3);
				\else
					\draw (a\i) -- (b\j);
				\fi
			\else
				\draw (a\i) -- (b\j);
			\fi
			\pgfmathtruncatemacro{\i}{\i + 4}		
			\pgfmathtruncatemacro{\j}{\j + 4}
			\ifnum \i=4
				\ifnum \j=4
					\draw (a\i) -- (b3);
					\draw (b\j) -- (a3);
				\else
					\draw (a\i) -- (b\j);
				\fi
			\else
				\draw (a\i) -- (b\j);
			\fi
		}
	}

	\draw (a3) -- (b3);

	\draw ($(a0)+(-0.25,0.25)$) rectangle ($(a2)+(0.25,-0.25)$);
	\node [left = 2mm of a0] {$A_1$};
	\draw ($(b0)+(-0.25,0.25)$) rectangle ($(b2)+(0.25,-0.25)$);
	\node [left = 2mm of b0] {$B_1$};
	\draw ($(a4)+(-0.25,0.25)$) rectangle ($(a6)+(0.25,-0.25)$);
	\node [right = 2mm of a6] {$A_2$};
	\draw ($(b4)+(-0.25,0.25)$) rectangle ($(b6)+(0.25,-0.25)$);
	\node [right = 2mm of b6] {$B_2$};

	\node [right = -0.25mm of a3] {$a$};
	\node [right = -0.25mm of b3] {$b$};

\end{tikzpicture}}

In order to describe our extensions of \cref{rho lower bound 3-conn - intro,lambda lower bound 3-conn - intro} to all \cc2s, we first discuss a decomposition of such graphs into \cc3s.
For a graph of even order, a cut is \emph{even} if both of its shores have even cardinality; otherwise, it is \emph{odd}.
Let $G$ be any \cc2\ that is not \con3.
By \cref{for cubic vtx-conn = edge-conn}, $G$ contains a \mbox{$2$-cut}, say $C$;
by \paritylemma, $C$ is an even cut.
Clearly, $G-C$ has precisely two components, and from either of them, one may construct a smaller cubic graph by adding an edge joining the vertices of degree two.
Inspired by the terminology in \cite[Section 9.4]{bomu08}, we refer to these smaller (cubic) graphs as the \emph{marked \mbox{\mbox{$C$-components}}} of~$G$;
if either of them is not \con3, then one may repeat this process recursively.
Clearly, at the end of this procedure, that we refer to as the \emph{\mbox{$2$-cut} decomposition procedure}, one obtains a list of \cc3s.
Depending on the choice of \mbox{$2$-cuts} made at each step, $G$ may admit multiple applications of this procedure.
Interestingly, the following is well-known;
it follows from \cite[Theorem 1]{cued80} as well as \cite[Theorem 1.16]{nmgk25}.

\begin{theorem}\label{unique 2cd}
	Any two applications of the \mbox{$2$-cut} decomposition procedure to a \cc2\ $G$ yield the same list of \cc3s.
\end{theorem}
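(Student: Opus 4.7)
The plan is to proceed by induction on $n(G)$, using the submodularity of the cut function together with \paritylemma\ to control the interaction of distinct 2-cuts. If $G$ is itself \con3, then both procedures terminate immediately with the singleton list $\{G\}$, and there is nothing to prove. Otherwise, let $\mathcal{A}_1$ and $\mathcal{A}_2$ denote two applications of the procedure, choosing 2-cuts $C_1 = \partial(X_1)$ and $C_2 = \partial(X_2)$ at the first step, respectively; the marked $C_i$-components have strictly fewer vertices than $G$, so it suffices to prove a \emph{commutation lemma}: whenever $C_1 \neq C_2$, one can reorder each application so that it first processes the same 2-cut, reaching a common intermediate list of smaller \cc2s. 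Induction on order then handles the rest.

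To prove the commutation lemma, two cases arise. In the \emph{nested} case one may assume $X_2 \subseteq X_1$. After $\mathcal{A}_1$ processes $C_1$, I would verify that $C_2$ corresponds to a 2-cut in the marked $C_1$-component on $X_1$: the marker edge added during splicing either does not lie in the image of $C_2$, or can be swapped with an edge of $C_1 \cap C_2$ to produce an honest 2-cut of size two (and the marked $C_1$-component on $\bar X_1$ is unaffected by $C_2$). Therefore the two-step sequence ``$C_1$ then the lift of $C_2$'' arising in $\mathcal{A}_1$ yields the same three marked components, up to labels, as the two-step sequence ``$C_2$ then the lift of $C_1$'' arising in $\mathcal{A}_2$.

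In the \emph{crossing} case, where all four quadrants $X_1 \cap X_2$, $X_1 \setminus X_2$, $X_2 \setminus X_1$, and $V(G) \setminus (X_1 \cup X_2)$ are nonempty, submodularity of the cut function gives
\[ |\partial(X_1 \cap X_2)| + |\partial(X_1 \cup X_2)| \leqslant |\partial(X_1)| + |\partial(X_2)| = 4, \]
and since $G$ is \con2, each of these cuts has size at least two; hence both equal two, and by \paritylemma\ both $X_1 \cap X_2$ and $X_1 \cup X_2$ have even cardinality. A symmetric application yields that $\partial(X_1 \setminus X_2)$, $\partial(X_2 \setminus X_1)$ and $\partial(V(G) \setminus (X_1 \cup X_2))$ are all 2-cuts as well. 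Both $\mathcal{A}_1$ and $\mathcal{A}_2$ can therefore be extended by processing all four such ``quadrant'' 2-cuts (in a compatible nested order) as their next steps; the refined procedures yield the same four marked components, one for each quadrant, so again we land in a common intermediate state, and induction finishes the argument.

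The main obstacle, and the step requiring the most careful bookkeeping, is the crossing case: one must verify that after successively splicing along the four quadrant cuts the resulting four pieces (together with their marker edges) do not depend on the order of splicing, and consequently that their subsequent decompositions into \cc3s coincide. Each individual sub-step reduces to checking how a marker edge introduced at one splicing corresponds to an edge of the next 2-cut; this is a routine but tedious case analysis, best organized by tracking, for each quadrant, where the ends of the marker edges land. Once this invariant is in place, the commutation lemma, and with it the theorem, follows.
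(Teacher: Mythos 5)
Your proposal takes a genuinely different route from the paper, which does not prove \cref{unique 2cd} at all: it simply cites \cite{cued80} and \cite{nmgk25}. You instead give a self-contained local-confluence argument --- induction on order plus a commutation lemma reconciling the first two choices of $2$-cut --- and the architecture is correct: since the marked $C$-components are strictly smaller, the induction hypothesis makes their decompositions canonical, so it suffices to exhibit a common two-step refinement. The verifications you defer do go through. In the crossing case your submodular and posimodular counts force a rigid picture: if $c$ and $d$ denote the numbers of edges between $X_1\cap X_2$ and $\overline{X_1\cup X_2}$, and between $X_1\setminus X_2$ and $X_2\setminus X_1$, respectively, then $c=d=0$ and there is exactly one edge between each pair of cyclically adjacent quadrants; hence after processing $C_1$ the cut $C_2$ lifts to a $2$-cut of \emph{each} marked $C_1$-component (one surviving edge of $C_2$ plus that component's marker edge), and the marker edge of each quadrant piece ends up joining the same pair of vertices regardless of which of $C_1,C_2$ is processed first. (So one processes two further cuts, one per component, rather than ``all four quadrant cuts''; the outcome is the four quadrant pieces.) In the nested case $X_2\subseteq X_1$ the parameter $|C_1\cap C_2|$ is $0$ or $1$ --- the value $2$ would force $C_1=C_2$ as cuts --- which is exactly your dichotomy between ``marker edge not in the image of $C_2$'' and ``swap one edge for the marker edge.'' What the paper's citation buys is brevity and a connection to Cunningham--Edmonds' general decomposition theory; what your argument buys is an elementary proof using only the Parity Lemma and counting, at the cost of the marker-edge bookkeeping, which you have sketched rather than carried out but which does close.
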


For a \cc2 $G$, we refer to these \cc3s as its \emph{\con3 pieces}.
We use $n_{\sf nonbip}(G)$ to refer to the sum of the orders of the nonbipartite \con3 pieces of~$G$; this invariant shall appear in our aforementioned extension of \cref{lambda lower bound 3-conn - intro}.
For instance, the \con3 pieces of the graph shown in \cref{fig: k4 glue k33} are one $K_4$ and one $K_{3,3}$; consequently, $n_{\sf nonbip}=4$.
Using the fact that, for a \cc2, $|M \cap C| \in \{0,2\}$, where $M$ is any perfect matching and $C$ is any $2$-cut, the following is easily proved.

\begin{proposition}\label{cc2 is bipartite iff each marked C-component is bipartite}
	A \cc2 $H$ is bipartite if and only if both of its marked \mbox{$C$-components} are bipartite, where $C$ is any $2$-cut; consequently, $H$ is bipartite if and only if \mbox{$n_{\sf nonbip}(H)=0$}. \qed
\end{proposition}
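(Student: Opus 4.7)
Write $C = \{u_1v_1, u_2v_2\}$ with $u_1,u_2 \in X$ and $v_1,v_2 \in \Xbar$, so that the marked $C$-components are $H_1 := H[X] + u_1u_2$ and $H_2 := H[\Xbar] + v_1v_2$. The plan is to prove the local biconditional (bipartiteness of $H$ is equivalent to simultaneous bipartiteness of $H_1$ and $H_2$) for this arbitrary $2$-cut $C$, and then bootstrap to the global statement by induction on $n(H)$ using \cref{unique 2cd}.

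For the ``if'' direction of the local biconditional, I would fix bipartitions $(A_i, B_i)$ of $H_i$ for $i \in \{1,2\}$. Since $u_1u_2 \in E(H_1)$ and $v_1v_2 \in E(H_2)$, after relabeling we may assume $u_1 \in A_1$, $u_2 \in B_1$, $v_1 \in A_2$, $v_2 \in B_2$. Setting $A := A_1 \cup B_2$ and $B := B_1 \cup A_2$ gives a bipartition of $H$: edges inside $H[X]$ and inside $H[\Xbar]$ respect the bipartitions of $H_1$ and $H_2$ respectively, while the cut edges $u_1v_1$ (joining $A_1 \subseteq A$ with $A_2 \subseteq B$) and $u_2v_2$ (joining $B_1 \subseteq B$ with $B_2 \subseteq A$) also cross.

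For the ``only if'' direction, assume $H$ has a bipartition $(A, B)$, and set $a := |X \cap A|$, $b := |X \cap B|$, and let $c$ denote the number of $i \in \{1,2\}$ with $u_i \in A$. Double-counting edges incident to each part of $X$, with $e$ denoting the number of edges of $H[X]$ between $X \cap A$ and $X \cap B$, yields $3a = e + c$ and $3b = e + (2-c)$, hence $3(a-b) = 2(c-1)$. The left-hand side is a multiple of $3$ while the right-hand side lies in $\{-2, 0, 2\}$, so both vanish; in particular $c = 1$, meaning $u_1$ and $u_2$ lie in opposite color classes of $H$. Consequently the induced bipartition of $H[X]$ extends across the added edge $u_1u_2$ to a bipartition of $H_1$; the symmetric argument handles $H_2$. (Equivalently, invoking the hint $|M \cap C| \in \{0,2\}$, one can pick a perfect matching $M$ with $M \cap C = \emptyset$ to force $a = b$, and another with $C \subseteq M$ to force $c = 1$.)

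For the consequence, I would induct on $n(H)$. If $H$ is already \con3 then $H$ is its own only \con3 piece and the claim is immediate. Otherwise choose any $2$-cut $C$; applying the local biconditional together with the induction hypothesis on the strictly smaller $H_1$ and $H_2$, bipartiteness of $H$ is equivalent to bipartiteness of every \con3 piece of $H_1$ or $H_2$, which by \cref{unique 2cd} is exactly bipartiteness of every \con3 piece of $H$, i.e., $n_{\sf nonbip}(H) = 0$. The main obstacle is the forward direction of the local biconditional, where one must pin down that $u_1$ and $u_2$ lie in distinct color classes; the slick finish is the divisibility identity $3(a-b) = 2(c-1)$ that forces $c = 1$.
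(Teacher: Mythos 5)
Your proof is correct. The paper itself omits the argument, offering only the hint that one should use the fact that $|M \cap C| \in \{0,2\}$ for every perfect matching $M$ and $2$-cut $C$; your parenthetical remark is precisely that route (a perfect matching containing one edge of $C$ must contain both, which, together with one avoiding $C$, pins down that $u_1$ and $u_2$ receive opposite colours). Your primary argument reaches the same key fact by a different and slightly more elementary mechanism: the degree count $3a = e + c$, $3b = e + (2-c)$ and the divisibility identity $3(a-b) = 2(c-1)$ force $c = 1$ using only that $H$ is cubic and bipartite and that $|C|=2$, with no appeal to Sch\"onberger's theorem or to $H$ being matching covered. The ``if'' direction (gluing the two bipartitions so that both cut edges cross) and the bootstrap to the statement $n_{\sf nonbip}(H)=0$ by induction on the order via \cref{unique 2cd} are both sound. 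One point you use tacitly when relabelling $u_1 \in A_1$, $u_2 \in B_1$ is that the two edges of $C$ cannot share an endpoint: if they did, that endpoint's side of the cut would yield a $1$-cut, contradicting $\kappa'(H) \geqslant 2$; hence $u_1 \neq u_2$, $v_1 \neq v_2$, and the marker edges are genuine (non-loop) edges. Making that one sentence explicit would close the only gap.
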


We state one more result that relates the bricks and braces of a \cc2 to those of its marked \mbox{$C$-components} with respect to a given $2$-cut $C$; we shall find it useful in proving our extensions of \cref{lambda lower bound 3-conn - intro,rho lower bound 3-conn - intro}.

\begin{proposition}\label{bstar across even 2-cut}
	For a \cc2 $G$ that has a $2$-cut $C$, precisely one of the following holds, where $G_1$ and $G_2$ are its marked \mbox{$C$-components}:
	\begin{enumerate}[label=(\roman*)]
		\item neither $G_1$ nor $G_2$ is $\Theta$, and $\bstar(G) \,=\, \bstar(G_1)~\cupdot~\bstar(G_2)~\cupdot~\{C_4\}$, or
		\item precisely one of $G_1$ and $G_2$, say $G_1$, is $\Theta$, and $\bstar(G) \,=\, \bstar(G_2)~\cupdot~\{C_4\}$, or
		\item both $G_1$ and $G_2$ are $\Theta$, and $G$ is $\cfour$ and $\bstar(G) = \{C_4\}$.
	\end{enumerate}
	Consequently, \hspace{1mm} $b(G)=b(G_1)+b(G_2)$, \hspace{1mm} $\B(G)=\B(G_1)+\B(G_2)$, $b'(G)=b'(G_1)+b'(G_2)$, and \mbox{$\b(G)=\b(G_1)+\b(G_2)$}.
\end{proposition}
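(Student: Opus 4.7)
The plan is to realise the $2$-cut $C$ as arising from nontrivial tight cuts of $G$: I shall exhibit tight $3$-cuts whose iterated contractions produce precisely the multiset $\bstar(G_1) \cupdot \bstar(G_2) \cupdot \{C_4\}$ (or the appropriate sub-multiset in cases (ii) and (iii)), and then invoke \cref{tcd uniqueness} to finish.

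Write $C = \partial_G(X) = \{u_1v_1, u_2v_2\}$ with $u_1, u_2 \in X$ and $v_1, v_2 \in \overline{X}$, so that $G_1$ (respectively $G_2$) is the marked $C$-component obtained from $G[X]$ (resp.\ $G[\overline{X}]$) by adding the edge $u_1u_2$ (resp.\ $v_1v_2$). A standard degree count gives $|X|$ and $|\overline{X}|$ even. Case (iii), when $|X|=|\overline{X}|=2$, is immediate: $u_1$ is joined to $u_2$ by two parallel edges in $G$ and similarly for $v_1,v_2$, so the underlying simple graph of $G$ is $C_4$, and a direct check confirms that $G$ has no nontrivial tight cut, making $G$ itself a brace with $\bstar(G) = \{C_4\}$.

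The heart of the proof is Case (i), when $|X|,\,|\overline{X}|\geqslant 4$. Set $X_1 := X \setminus \{u_2\}$, so that $\partial_G(X_1)$ consists of $u_1v_1$ together with the two edges from $u_2$ to its neighbours in $X$; thus $|\partial_G(X_1)|=3$. For any perfect matching $M$ of $G$, \paritylemma\ yields $|M \cap \partial_G(X_1)|$ odd, while $u_2$ can be saturated by at most one edge, forcing $|M \cap \partial_G(X_1)|=1$; hence $\partial_G(X_1)$ is a tight $3$-cut of $G$. The contraction $G/\overline{X_1}$ sends the contraction-vertex to a vertex whose neighbours are $u_1$ and the two $G[X]$-neighbours of $u_2$, which are exactly the neighbours of $u_2$ in $G_1$; consequently $G/\overline{X_1} \cong G_1$. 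For the companion contraction $G/X_1$ with contraction-vertex $x_1$, I apply an entirely analogous parity argument to the set $Y := \{u_2,x_1,v_1\}$: the cut $\partial_{G/X_1}(Y)$ consists of $u_2v_2$ together with the two edges from $v_1$ to its $G[\overline{X}]$-neighbours, and is tight for the same reason. Its two contractions are, respectively, a four-vertex graph whose underlying simple graph is $C_4$ and a graph isomorphic to $G_2$ (with the contraction-vertex playing the role of $v_1$).

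Case (ii), where exactly one of $G_1, G_2$ equals $\Theta$, is handled by applying the same construction on the shore of order $\geqslant 4$; the cascade of contractions then produces $G_2$ (say) and a $C_4$ directly. In all cases, \cref{tcd uniqueness} delivers the claimed identity for $\bstar(G)$. The consequences for $b(G)$, $\B(G)$, $b'(G)$, and $\b(G)$ are immediate, since $C_4$ is a brace (contributing $0$ to $b$ and $\B$) of order $4$ (contributing $0$ to $b'$ and $\b$). The main subtlety I expect to navigate is the bookkeeping around multiple edges---specifically, confirming that $x_1$ acquires two parallel edges to $u_2$ in $G/X_1$, so that the secondary tight cut genuinely produces $C_4$ as underlying simple graph.
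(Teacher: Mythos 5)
Your proof is correct and follows essentially the same route as the paper's: both realise the $2$-cut decomposition step as two successive (nontrivial) tight $3$-cut contractions yielding $G_1$, $G_2$ and a copy of $\cfour$, and then invoke the uniqueness of the tight cut decomposition. The only cosmetic difference is that the paper takes the two tight cuts $\partial_G(X-u)$ and $\partial_G(\Xbar-v)$ with $uv \in C$, both defined directly in $G$ with disjoint shores, whereas you perform the second contraction inside $G/X_1$; the resulting multiset identity and the bookkeeping for $b$, $\B$, $b'$ and $\b$ are identical.
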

\begin{proof}
	Let $X:=V(G_1)$ and let $uv \in C$ so that $u \in X$. Let $Y_1:=X-\{u\}$ and $Y_2:=\Xbar - \{v\}$. Observe that $\partial_G(Y_1)$ and $\partial_G(Y_2)$ are tight cuts of $G$ such that $Y_1 \cap Y_2 = \emptyset$. Observe that $G/\overline{Y_1}=G_1$, $G/\overline{Y_2}=G_2$ and $(G/Y_1)/Y_2$ is $\cfour$. For each $i \in \{1,2\}$, the tight cut $\partial_G(Y_i)$ is nontrivial if and only if $G_i$ is not $\Theta$. With these facts, the reader may verify that the desired conclusion holds in each case.
\end{proof}

We remark that the above result may be generalized to all \mcg s using even $2$-cuts but we choose to omit it.
We now state our extension of \cref{lambda lower bound 3-conn - intro} that applies to all \cc2s.
It implies \cite[Theorem 3.1 $(i)$]{clz25} which states that every \con2 nonbipartite cubic graph satisfies $\lambda \geqslant 4$, and provides a complete characterization of the tight examples.

\begin{theorem}\label{lambda lower bound 2-conn - intro}
	Every \cc2~$G$ satisfies the lower bound $\lambda(G) \geqslant \B(G)$ and equality holds if and only if $\B(G) = n_{\sf nonbip}(G)$.
\end{theorem}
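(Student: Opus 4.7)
The plan is to induct on $n(G)$, using \cref{lambda lower bound 3-conn - intro} as the base case. If $G$ is \con3 then $G$ is its own only \con3 piece, so $n_{\sf nonbip}(G) = 0$ if $G$ is bipartite and $n_{\sf nonbip}(G) = n(G)$ otherwise. In the bipartite case, \cref{mcg is bipartite iff b=0} yields $\B(G) = 0$, so the claim is trivial; in the nonbipartite case, the claim follows directly from \cref{lambda lower bound 3-conn - intro}.

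For the inductive step, suppose that $G$ has a $2$-cut $C$, with marked $C$-components $G_1$~and~$G_2$. \cref{bstar across even 2-cut} supplies the additivity $\B(G) = \B(G_1) + \B(G_2)$, and an application of the $2$-cut decomposition procedure to $G$ starting with $C$, together with \cref{unique 2cd}, yields the analogous $n_{\sf nonbip}(G) = n_{\sf nonbip}(G_1) + n_{\sf nonbip}(G_2)$. Combined with the inductive hypothesis $\lambda(G_i) \geqslant \B(G_i)$ with equality iff $\B(G_i) = n_{\sf nonbip}(G_i)$, along with the easy fact that $\B(G_i) \leqslant n_{\sf nonbip}(G_i)$ (as each brick of $G_i$ lies within a nonbipartite \con3 piece of $G_i$), these additivities reduce everything to the stronger key claim:
\[ \Lambda(G) \cap V(G_i) ~=~ \Lambda(G_i) \qquad \text{for each } i \in \{1, 2\}, \]
which in particular yields $\lambda(G) = \lambda(G_1) + \lambda(G_2)$.

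To establish the key claim, I would write $C = \{u_1 u_1', u_2 u_2'\}$ with $u_1, u_2 \in V(G_1)$ and $u_1', u_2' \in V(G_2)$, so that the marker edges of $G_1$ and $G_2$ are $u_1 u_2$ and $u_1' u_2'$, respectively. The pivotal observation, obtained by applying \paritylemma\ to $V(G_1)$ (whose vertices all have odd degree in any $\lambda$-matching of $G$), is that every $v$-matching $M$ of $G$ satisfies $|M \cap C| \in \{0, 2\}$. The restriction direction is then immediate: for any $v \in V(G_1)$, restricting $M$ to $V(G_1)$ and adding the marker precisely when $|M \cap C| = 2$ produces a $v$-matching of $G_1$. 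For the extension direction, given a $v$-matching $M_1$ of $G_1$, I would extend it to $G$ by unioning with any perfect matching of $G[V(G_2)]$ if $M_1$ avoids the marker, and by removing the marker, adding both $C$-edges, and unioning with a perfect matching of $G[V(G_2) \setminus \{u_1', u_2'\}]$ if $M_1$ contains the marker; the needed auxiliary perfect matchings of $G_2$ exist because $G_2$ is matching covered by Sch\"onberger's theorem. The marker vertices $u_1, u_2$ are handled the same way, noting that any $u_i$-matching of $G_1$ is forced to contain the marker (since $d_{G_1}(u_i) = 3$) and any $u_i$-matching of $G$ is forced to have $|M \cap C| = 2$, so the translations proceed identically.

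The main obstacle is the careful bookkeeping in the case analysis across the $2$-cut, especially for the marker vertices; but once the parity constraint $|M \cap C| \in \{0, 2\}$ is in place and the matching coverability of $G_2$ furnishes the auxiliary perfect matchings (one avoiding and one containing the marker), the extensions and restrictions are entirely routine.
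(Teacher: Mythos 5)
Your proposal is correct and follows essentially the same route as the paper: the paper likewise reduces to the \con3 case (\cref{beta lower bound 3-conn}) by inducting over a $2$-cut, using the additivity of $\B$ and $n_{\sf nonbip}$ across the marked $C$-components (\cref{bstar across even 2-cut,theta thetabar and n-nonbip over a 2 cut}) together with $\Lambda(G)=\Lambda(G_1)\cupdot\Lambda(G_2)$, which is exactly the paper's \cref{lm vertices across a 2-cut} and is proved there by the same parity-plus-marker-edge argument you sketch. The one soft spot is your parenthetical justification of $\B(G_i)\leqslant n_{\sf nonbip}(G_i)$ --- knowing that each brick lies in a nonbipartite \con3 piece does not by itself bound the sum of brick orders within a single piece $P$, since tight cut contractions can increase total order --- but the needed bound $\B(P)\leqslant\lambda(P)\leqslant n(P)$ for each nonbipartite piece $P$ follows immediately from \cref{beta lower bound 3-conn}, so this is a gap in exposition rather than in substance.
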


A proof of the above appears in \cref{sec: lm vertices in cubic graphs}.
In order to state our extension of \cref{rho lower bound 3-conn - intro}, we need two other invariants that also arise from \cref{unique 2cd}.
For a \cc2 $G$, we use $\theta(G)$ to denote the number of its \con3 pieces that are isomorphic to~$\Theta$, and $\thetabar(G)$ to denote the number of them that are not isomorphic to $\Theta$. For instance, $\theta(H)=1$ and $\thetabar(H)=2$ for the graph shown in \cref{fig: k33 glue theta glue k33}.

The following is a consequence of \cref{unique 2cd} that we shall find useful later.

\begin{corollary}\label{theta thetabar and n-nonbip over a 2 cut}
	For a \cc2\ $G$ that has a $2$-cut $C$, the following hold, where $G_1$ and $G_2$ are its marked \mbox{$C$-components}: $(i)$ $\theta(G)=\theta(G_1)+\theta(G_2)$,
	$(ii)$ $\thetabar(G)=\thetabar(G_1)+\thetabar(G_2)$, and
	\mbox{$(iii)$ $n_{\sf nonbip}(G)=n_{\sf nonbip}(G_1)+n_{\sf nonbip}(G_2)$} \qed
\end{corollary}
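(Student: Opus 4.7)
The plan is to derive all three additive identities as direct consequences of \cref{unique 2cd}, by exhibiting a particular application of the \mbox{$2$-cut} decomposition procedure to $G$ that is compatible with the given $2$-cut $C$. First I would take the $2$-cut $C$ as the first cut chosen in the decomposition procedure applied to $G$; by definition of marked $C$-components, this step produces precisely $G_1$ and $G_2$. I would then continue the decomposition procedure separately on each of $G_1$ and $G_2$, fixing any particular applications of the procedure to them, which yield lists $\mathcal{L}_1$ and $\mathcal{L}_2$ of \cc3s. By construction, the concatenation $\mathcal{L}_1 \cupdot \mathcal{L}_2$ is the list of \cc3s produced by one complete application of the \mbox{$2$-cut} decomposition procedure to $G$.

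Next, invoking \cref{unique 2cd}, this list $\mathcal{L}_1 \cupdot \mathcal{L}_2$ coincides (as a multiset) with the list of \con3 pieces of $G$, independently of the choices made during any other application of the procedure. In particular, $\mathcal{L}_i$ is the list of \con3 pieces of $G_i$ for each $i \in \{1,2\}$.

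Each of the three invariants in question is defined purely as a function of the multiset of \con3 pieces, and each such function is additive over disjoint unions of multisets: $\theta$ counts the pieces isomorphic to $\Theta$, $\thetabar$ counts the remaining pieces, and $n_{\sf nonbip}$ sums the orders of the nonbipartite pieces (where being bipartite is a property of the underlying simple piece, unaffected by the decomposition order). Applying this observation to $\mathcal{L}_1 \cupdot \mathcal{L}_2$ immediately yields $(i)$, $(ii)$ and $(iii)$.

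I do not expect any real obstacle; the only small matter to check is that treating $C$ as the first cut in an application of the procedure is legitimate (which is immediate from \cref{for cubic vtx-conn = edge-conn} and the definition of the procedure), and that the concatenated recursion is a valid single application — both of which are essentially bookkeeping.
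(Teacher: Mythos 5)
Your proposal is correct and is precisely the argument the paper intends: it states the result as an immediate consequence of \cref{unique 2cd} without writing out the details, and your filling-in (start the decomposition at $C$, concatenate decompositions of $G_1$ and $G_2$, invoke uniqueness, then use additivity of $\theta$, $\thetabar$ and $n_{\sf nonbip}$ over the resulting multiset) is exactly that omitted bookkeeping.
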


The following is our extension of \cref{rho lower bound 3-conn - intro}.

\begin{theorem}\label{rho lower bound 2-conn - intro}
	Every connected bipartite cubic graph $H$ satisfies:
	\[ \rho(H) ~\geqslant~ \b(H) + 3b'(H) - 3\thetabar(H)+\theta(H) ~\geqslant~ 3n(H) -9\thetabar(H)-5\theta(H)\]
\end{theorem}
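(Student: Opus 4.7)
I induct on the number $\thetabar(H)+\theta(H)$ of \con{3} pieces of $H$. For the base case, $H$ itself is \con{3}: either $H=\Theta$, so $\rho(H)=1$ and both sides of each inequality evaluate to $1$, or else $H\neq\Theta$, whence $\thetabar(H)=1$ and $\theta(H)=0$, and the chain of inequalities collapses to $\rho(H)\geqslant \b(H)+3b'(H)-3\geqslant 3n(H)-9$, which is precisely \cref{rho lower bound 3-conn - intro}.

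For the inductive step, pick a $2$-cut $C=\{u_1v_1,u_2v_2\}$ of $H$ with shores $X\ni u_1,u_2$ and $\Xbar\ni v_1,v_2$, and let $H_1,H_2$ denote the marked $C$-components. A short modulo-$3$ count on edge-colour intersections in the bipartite cubic graph $H$ (using $|C|=2$ and cubicity) forces $u_1,u_2$ into opposite colour classes (and similarly $v_1,v_2$), so that adjoining the artificial edges $u_1u_2$ and $v_1v_2$ preserves bipartiteness; hence each $H_i$ is again a connected bipartite cubic graph with strictly fewer \con{3} pieces than $H$, by \cref{unique 2cd}.

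The heart of the proof is the identity
\[
\rho(H)\,=\,\rho(H_1)+\rho(H_2).
\]
It splits into two ingredients. First, no \emph{cross pair}---$(a,b)$ with $a$ and $b$ on opposite shores of $C$---is \lm\ in $H$; this is \cref{lm pairs across a 2-cut}, and can be argued by a colour count on any would-be $(a,b)$-matching $M$: \paritylemma\ gives $|M\cap C|\in\{0,2\}$, yet comparing degree sums of $M$ on $X\cap A$ versus $X\cap B$ forces the number of cut edges of $M$ with $A$-end in $X$ to exceed the number with $B$-end in $X$ by exactly $2$, impossible since only one cut edge of each orientation exists. Second, for any same-shore pair $(a,b)\subseteq X$ (the $\Xbar$ case being symmetric), $(a,b)$ is \lm\ in $H$ iff it is \lm\ in $H_1$. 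The forward direction restricts an $(a,b)$-matching $M$ of $H$ to $G[X]$ and adjoins the artificial edge $u_1u_2$ precisely when $|M\cap C|=2$, restoring the correct degrees at $u_1$ and $u_2$; the reverse direction glues an $(a,b)$-matching of $H_1$ with a perfect matching of $H_2-v_1-v_2$ (when $u_1u_2$ is used) or of $H_2-v_1v_2$ (when it is not), both of which exist since $H_2$ is matching covered and, as \cref{every tight cut of a cc2 is a 3-cut} implies, no edge of a cubic matching covered graph is forced.

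Granted the identity, applying the induction hypothesis to $H_1$ and $H_2$ and invoking the additivity of $\b$, $b'$, $\theta$, $\thetabar$, and $n$ across $2$-cuts---from \cref{bstar across even 2-cut} and \cref{theta thetabar and n-nonbip over a 2 cut}---yields both inequalities for $H$ by summing the corresponding inequalities for $H_1$ and $H_2$. The delicate step will be the cross-pair impossibility: one has to handle uniformly every overlap of $\{a,b\}$ with $\{u_1,u_2,v_1,v_2\}$ and keep track of how the colour count behaves in the two subcases $|M\cap C|=0$ and $|M\cap C|=2$. The rest of the argument is bookkeeping on top of \cref{rho lower bound 3-conn - intro}.
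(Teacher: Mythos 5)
Your proposal is correct and follows essentially the same route as the paper: induct (the paper uses the order, you use the number of \con3 pieces, which is immaterial), reduce the base case to \cref{rho lower bound 3-conn - intro}, and in the inductive step combine the identity $\rho(H)=\rho(H_1)+\rho(H_2)$ from \cref{lm pairs across a 2-cut} with the additivity of $\b$, $b'$, $\theta$, $\thetabar$ and $n$ across $2$-cuts given by \cref{bstar across even 2-cut} and \cref{theta thetabar and n-nonbip over a 2 cut}. Your colour-count justification that the two cut edges join opposite colour classes (so the marked components stay bipartite) is the content of \cref{cc2 is bipartite iff each marked C-component is bipartite}, and the rest matches the paper's argument.
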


For instance, for the graph shown in \cref{fig: k33 glue theta glue k33}, $\rho ~=~ \b + 3b' - 3\thetabar +\theta$ $ ~=~ 3n - 9\thetabar -5\theta ~=~ 19$.
A proof of the above appears in \cref{sec: lm pairs in bipartite cubic graphs}.
For both of the above theorems, we use $2$-cuts as an induction tool, and shall also provide structural characterizations of the tight examples in terms of their \con3 pieces, in their corresponding sections.

\section{\lm\ pairs in bipartite cubic graphs}\label{sec: lm pairs in bipartite cubic graphs}
In order to establish the lower bound stated in \cref{rho lower bound 3-conn - intro}, we shall use tight cuts as our induction tool.
To this end, we first discuss a characterization of tight cuts, or equivalently, of separating cuts in bipartite graphs.
Since any tight cut is an odd cut, let us consider an odd cut $\partial(X)$ of a \bmcg\ $H[A,B]$.
Clearly, one of $X \cap A$ and $X \cap B$ is larger than the other; we use $X^+$~and~$X^-$ to denote the larger and the smaller sets, respectively, and
$\Xbar^+$~and~$\Xbar^-$ are defined analogously.
The following is a well-known characterization of tight cuts that is easily proved, and implies \cref{mcg is bipartite iff b=0}; see \cite[Theorem 4.8]{lumu24}.

\begin{proposition}\label{characterization of tight cuts in bmcgs}
	An odd cut $\partial(X)$ of a \bmcg\ $H[A,B]$ is tight if and only if: $(i)$~$|X^+| = |X^-|+1$, and $(ii)$ there is no edge that has one end in $X^-$ and its other end in $\Xbar^-$. Consequently, if $\partial(X)$ is a tight cut, then both  \mbox{$\partial(X)$-contractions} are bipartite. \qed
\end{proposition}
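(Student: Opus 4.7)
The plan is to prove both equivalences by a single double-counting argument, using matching-coveredness only at one precise moment. First, set up notation by observing that, since $H[A,B]$ is bipartite and matchable, $|A|=|B|$. Adjust labels so that $X^+ \subseteq A$, and note that $|A|-|X^+| < |B|-|X^-|$ exactly because $|X^+|>|X^-|$; hence $\Xbar^+ \subseteq B$ and $\Xbar^- \subseteq A$. From this, the bipartite structure forces every edge of $\partial(X)$ to go either between $X^+$ and $\Xbar^+$, or between $X^-$ and $\Xbar^-$.

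Next, for any perfect matching $M$, let $r$ denote the number of edges of $M$ with both ends in $X$. These edges all connect $X^+$ to $X^-$, so counting incidences on each side gives
\[
|M \cap \partial(X)| \;=\; (|X^+|-r) + (|X^-|-r) \;=\; |X|-2r,
\]
and moreover $r \leqslant |X^-|$. Since $|X|$ is odd and $|X^+|>|X^-|$, we get $|M \cap \partial(X)| \geqslant |X^+|-|X^-| \geqslant 1$, with equality if and only if $|X^+|=|X^-|+1$ and $M$ contains no edge between $X^-$ and $\Xbar^-$. Here the only nontrivial step is bookkeeping; I do not expect a real obstacle.

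For the main equivalence, tightness requires the above equality for every perfect matching. The parity condition $(i)$ depends only on $X$, so it must hold. The remaining requirement — that no perfect matching uses an edge between $X^-$ and $\Xbar^-$ — is, by matching-coveredness (every edge lies in some perfect matching), equivalent to condition $(ii)$: no such edge exists at all. This is the single place where the \mc\ hypothesis is used. Conversely, if $(i)$ and $(ii)$ both hold, plugging $r=|X^-|$ into the counting identity immediately yields $|M \cap \partial(X)|=1$ for every perfect matching, so $\partial(X)$ is tight.

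Finally, for the consequence about bipartiteness of the contractions: once $(ii)$ is established, all edges of $\partial(X)$ join $X^+ \subseteq A$ to $\Xbar^+ \subseteq B$. Therefore in $H/X\to x$ every neighbor of $x$ lies in $B$, so coloring $x$ with $A$ extends the bipartition $(A,B)$ of $H[\Xbar]$ to a proper bipartition of the contraction; the argument for $H/\Xbar$ is symmetric. This also automatically yields \cref{mcg is bipartite iff b=0} by iterating the tight cut decomposition, since bipartiteness is preserved at every contraction step.
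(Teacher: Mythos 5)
Your proof is correct, and it is the standard counting argument that the paper itself omits (the proposition is stated with a citation to Lucchesi--Murty and no proof). The identity $|M\cap\partial(X)|=|X|-2r$ together with $r\leqslant|X^-|$, the use of matching-coveredness exactly once to convert ``no perfect matching uses an $X^-$--$\Xbar^-$ edge'' into ``no such edge exists,'' and the bipartition of the contractions via $X^+\subseteq A$, $\Xbar^+\subseteq B$ are all sound and complete.
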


Using the above, and an easy counting argument, one may easily prove that in a connected bipartite cubic graph, every tight cut is a $3$-cut --- a special case of \cref{every tight cut of a cc2 is a 3-cut}.
Interestingly, \lm\ pairs admit a characterization that involves tight cuts as well as $2$-cuts, but we need a couple of results in order to establish it.
The following lemma, which is a generalization of \cite[Lemma 4.5]{clz25},
relates \lmity\ of pairs in a connected bipartite cubic graph $H$ with \lmity\ of appropriate pairs in its \mbox{$\partial(X)$-contractions}, where $\partial(X)$ is any tight cut.

\begin{lemma}\label{lm pairs across a tight cut - lemma version}
	Let $\partial(X)$ denote a tight cut of a connected bipartite cubic graph $H[A,B]$; adjust notation so that $X^+ \subseteq B$, and let $H_1:=H/\Xbar \rightarrow \xbar$ and $H_2:=H/X \rightarrow x$. Then:
	\begin{enumerate}[label=(\roman*)]
		\item for every $a \in X^-$ and $b \in \Xbar^-$, the pair $(a,b)$ is not \lm\ in $H$,
		\item for every $a \in X^-$ and $b \in X^+$, the pair $(a,b)$ is \lm\ in $H$ if and only if it is \lm\ in $H_1$, and
		\item for every $a \in \Xbar^+$ and $b \in X^+$, the pair $(a,b)$ is \lm\ in $H$ if and only if the pairs $(\xbar, b)$ and $(a,x)$ are \lm\ in $H_1$ and in $H_2$, respectively.
	\end{enumerate}
\end{lemma}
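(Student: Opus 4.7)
The plan is to combine a careful degree count across the cut with the fact that $\partial(X)$, being a tight $3$-cut, ensures that $H_1$ and $H_2$ are both bipartite connected cubic matching covered graphs. First, I would invoke \cref{characterization of tight cuts in bmcgs} and \cref{every tight cut of a cc2 is a 3-cut} to establish that $|X^+|=|X^-|+1$, symmetrically $|\Xbar^+|=|\Xbar^-|+1$, and every edge of $\partial(X)$ joins $X^+$ to $\Xbar^+$. In particular, no cut edge is incident to a vertex of $X^-$. Consequently, for any spanning subgraph $M$ of $H$, counting the $M$-edges internal to $H[X]$ from either side of the bipartition of $X$ yields the identity
\[
  |M \cap \partial(X)| ~=~ \sum_{v \in X^+} d_M(v) \;-\; \sum_{v \in X^-} d_M(v).
\]

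Applying this identity to a hypothetical $(a,b)$-matching $M$: in case $(i)$, the right-hand side equals $|X^+|-(|X^-|+2)=-1$, which is impossible, settling $(i)$ immediately; in case $(ii)$, it equals $(|X^+|+2)-(|X^-|+2)=1$, so $M$ contains exactly one cut edge; and in case $(iii)$, it equals $(|X^+|+2)-|X^-|=3$, so $M$ contains all three cut edges. The forward directions of $(ii)$ and $(iii)$ then fall out immediately: in $(ii)$, the lone cut edge of $M$ becomes the unique edge of $M\cap E(H_1)$ incident to $\xbar$, giving $\xbar$ degree $1$ and making $M \cap E(H_1)$ an $(a,b)$-matching of $H_1$; in $(iii)$, the three cut edges give $\xbar$ degree $3$ in $M\cap E(H_1)$ and $x$ degree $3$ in $M\cap E(H_2)$, producing a $(\xbar,b)$-matching of $H_1$ and an $(a,x)$-matching of $H_2$, respectively.

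For the converses, I would rely on the matching coverage of $H_1$ and $H_2$ (which holds since $\partial(X)$ is tight, hence separating). For $(ii)$, given an $(a,b)$-matching $M_1$ of $H_1$, write the unique edge of $M_1$ at $\xbar$ as $\xbar v$ with $v\in X^+$, corresponding to the cut edge $e=uv$ of $H$ where $u\in \Xbar^+$; the lift of $M_1$ to $E(H)$ then leaves exactly $\Xbar-u$ to be covered. By \cref{characterization of bmcgs} $(ii)$ applied to the bipartite matching covered graph $H_2$ (whose color classes are $\Xbar^+$ and $\Xbar^-\cup\{x\}$, placing $u$ and $x$ on opposite sides), the graph $H_2-u-x=H[\Xbar]-u$ is matchable, and adjoining any such perfect matching completes the $(a,b)$-matching of $H$. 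For $(iii)$, given a $(\xbar,b)$-matching $M_1$ of $H_1$ and an $(a,x)$-matching $M_2$ of $H_2$, both matchings contain (lifts of) all three cut edges of $H$, so the union $(M_1\cap E(H[X])) \cup \partial(X) \cup (M_2\cap E(H[\Xbar]))$ is a spanning subgraph of $H$ with $a$ and $b$ of degree $3$ and all other vertices of degree $1$.

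The main obstacle is not a single hard step but the uniform treatment of three rather different-looking cases; centralizing the combinatorics in the universal identity above cleanly splits the analysis into an impossibility in $(i)$ and exact cut-edge counts of $1$ and $3$ in $(ii)$ and $(iii)$. After that, the remaining work in $(ii)$ is one application of matching coverage in $H_2$ via \cref{characterization of bmcgs}, and the remaining work in $(iii)$ is a routine degree verification: the constraint that every non-special $v\in X^+$ has $M_1$-degree $1$ automatically limits its $\xbar$-incident edges to at most one, and the symmetric constraint on $\Xbar^+$ makes the union's degrees line up without further adjustment.
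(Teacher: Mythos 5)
Your proof is correct and follows essentially the same route as the paper: a degree count across the tight cut (using that no edge leaves $X^-$) pins down $|M\cap\partial(X)|$ as $-1$, $1$, or $3$ in the three cases, and the converses are assembled by lifting and gluing matchings across the cut. The only cosmetic difference is in the converse of $(ii)$, where you delete $u$ and $x$ from $H_2$ and invoke \cref{characterization of bmcgs}~$(ii)$ rather than picking a perfect matching of $H_2$ through the relevant cut edge; the two are equivalent.
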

\begin{proof}
	First, let $a \in X^-$  and $b \in \Xbar^-$.
	Suppose that there exists an $(a,b)$-matching $M$.
	Observe that $|\partial_M(X^-)| = |X^-|+2 = |X^+| + 1 > |X^+| = |\partial_{M}(X^+)|$.
	On the other hand, \cref{characterization of tight cuts in bmcgs} implies that $\partial_{M}(X^-) \subseteq \partial_{M}(X^+)$; a contradiction. This proves $(i)$.

	Now, let $a \in X^-$ and $b \in X^+$.
	First, suppose that $(a,b)$ is \lm\ in $H$, and let $M$ denote an $(a,b)$-matching.
	We let $M_1:=M \cap E(H_1)$;
	observe that $d_{M_1}(v)=1$ for each $v \in V(H_1)-\{a,b,\xbar\}$ and that $d_{M_1}(a)=d_{M_1}(b)=3$.
	By counting, we infer that $d_{M_1}(\xbar) = 1$. Ergo, $M_1$ is an $(a,b)$-matching of~$H_1$.
	Conversely, suppose that $(a,b)$ is \lm\ in $H_1$, and let $M_1$ denote an $(a,b)$-matching of~$H_1$.
	Let $M_2$ denote a perfect matching of $H_2$ that contains
	the unique edge in $M_1 \cap \partial_{H_1}(\xbar)$.
	Observe that $M_1 \cup M_2$ is an $(a,b)$-matching of~$H$, and this proves $(ii)$.

	Finally, let $a \in \Xbar^+$ and $b \in X^+$.
	First, suppose that $(a,b)$ is \lm\ in $H$, and let $M$ denote an $(a,b)$-matching.
	We let $M_1:=M \cap E(H_1)$;
	observe that $d_{M_1}(v)=1$ for each $v \in V(H_1)-\{b,\xbar\}$ and that $d_{M_1}(b)=3$.
	By counting, we conclude that $d_{M_1}(\xbar) = 3$. In other words, $M_1$ is an $(\xbar,b)$-matching of $H_1$.
	Thus, $(\xbar,b)$ is \lm\ in $H_1$. An analogous argument shows that $(a,x)$ is \lm\ in~$H_2$.
	Conversely, suppose that $(\xbar,b)$ and $(a,x)$ are \lm\ in $H_1$ and in $H_2$, respectively.
	Let~$M_1$ denote an $(\xbar,b)$-matching of $H_1$ and let $M_2$ denote an $(a,x)$-matching of $H_2$.
	Observe that $M_1 \cup M_2$ is an $(a,b)$-matching of $H$, and this proves $(iii)$.
\end{proof}

The following lemma, which yields \cite[Lemma 4.7]{clz25} if restricted to simple graphs, relates \lmity\ of pairs in a connected bipartite cubic graph $H$ with \lmity\ of appropriate pairs in its marked \mbox{$C$-components}, where $C$ is any $2$-cut of $H$.

\begin{lemma}\label{lm pairs across a 2-cut}
	Let $H[A,B]$ denote a connected bipartite cubic graph that has a $2$-cut $C$, and let $H_1$ and $H_2$ denote its marked $C$-components.
	Then, for any $a \in A$ and $b \in B$, the pair
	$(a,b)$ is \lm\ in $H$ if and only if both $a$ and $b$ belong to the same component of $H-C$ and $(a,b)$ is \lm\ in the corresponding marked $C$-component;
	consequently, $\Rho(H) = \Rho(H_1) \cupdot \Rho(H_2)$ and $\rho(H)=\rho(H_1)+\rho(H_2)$.
\end{lemma}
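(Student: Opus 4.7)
The plan is to prove both directions by adapting the style of \cref{lm pairs across a tight cut - lemma version}, with an additional preliminary step that extracts the bipartite/cubic structure of the marked $C$-components. First, I would set up notation: write $C=\{u_1v_1,u_2v_2\}$ with $u_1,u_2 \in X$ and $v_1,v_2 \in \Xbar$. A routine degree count (using $3|X|=2|E(H[X])|+2$) together with the bipartite identity $3|X \cap A| - c_1 = 3|X \cap B| - c_2$, where $c_i \in \{0,1,2\}$ records the bipartite "orientation" of the cut edges, forces $(c_1,c_2)=(1,1)$ and $|X \cap A|=|X \cap B|$. In particular $u_1,u_2$ lie in different color classes, so the marker edges in $H_1$ and $H_2$ respect the bipartition; consequently $H_1$ and $H_2$ are connected bipartite cubic graphs and hence matching covered by Sch\"onberger's theorem.

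For the "not $\lambda$-matchable across $C$" direction, assume $a \in X \cap A$ and $b \in \Xbar \cap B$ and suppose, for contradiction, an $(a,b)$-matching $M$ exists. I would mimic the counting in part $(i)$ of \cref{lm pairs across a tight cut - lemma version}: let $\alpha$ (resp.\ $\beta$) be the number of $M$-edges going from $X \cap A$ (resp.\ $X \cap B$) to $\Xbar$; degree summation on $X \cap A$ gives $|M \cap E(H[X])| + \alpha = |X \cap A| + 2$, while on $X \cap B$ it gives $|M \cap E(H[X])| + \beta = |X \cap B|$. Since $|X \cap A|=|X \cap B|$, this yields $\alpha - \beta = 2$, contradicting $\alpha,\beta \in \{0,1\}$.

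For the "same component" direction, assume WLOG $a,b \in X$. Running the same bipartite count (now with the degree-$3$ vertex $b$ also in $X \cap B$) gives $\alpha=\beta \in \{0,1\}$, so $|M \cap C| \in \{0,2\}$. In the forward direction, I would take $M_1:=M \cap E(H[X])$ when $M \cap C = \emptyset$, and $M_1:=(M \cap E(H[X])) \cup \{u_1u_2\}$ when $M \cap C = C$, then verify case-by-case (handling the sub-cases where $u_1$ or $u_2$ coincides with $a$ or $b$) that $M_1$ is an $(a,b)$-matching of $H_1$. Conversely, given an $(a,b)$-matching $M_1$ of $H_1$: if $u_1u_2 \notin M_1$, I would find a perfect matching $M_2$ of $H_2-v_1v_2$ and set $M:=M_1 \cup M_2$; if $u_1u_2 \in M_1$, I would find a perfect matching $M_2$ of $H_2$ containing $v_1v_2$ (which exists since $H_2$ is matching covered) and set $M:=(M_1 - u_1u_2) \cup C \cup (M_2 - v_1v_2)$.

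The only real obstacle is the backward Case A: establishing that $H_2 - v_1v_2$ has a perfect matching. This will follow from \cref{characterization of bmcgs}$(iii)$: for every nonempty proper $A' \subseteq V(H_2) \cap A$, matching coveredness of $H_2$ gives $|N_{H_2}(A')| \geqslant |A'|+1$, and deleting the edge $v_1v_2$ can decrease this neighborhood by at most one, so Hall's condition $|N_{H_2-v_1v_2}(A')| \geqslant |A'|$ still holds; the edge case $A' = V(H_2) \cap A$ is handled by $|N(A')|=|B|=|A'|$ unaffected by removing an edge. The equality $\Rho(H) = \Rho(H_1) \cupdot \Rho(H_2)$ then follows from the three established statements (across $C$ gives nothing, each component contributes exactly its $\lambda$-matchable pairs, and $V(H_1) \cap V(H_2) = \emptyset$), and $\rho(H)=\rho(H_1)+\rho(H_2)$ is immediate.
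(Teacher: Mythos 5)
Your proof is correct and follows essentially the same route as the paper's: restrict the matching to a shore and adjust by the marker edge in the forward direction, and paste in a perfect matching of the other marked component (containing or avoiding its marker edge, according as the marker edge lies in $M_1$ or not) in the reverse direction. The differences are cosmetic: where the paper cites \cref{cc2 is bipartite iff each marked C-component is bipartite}, \cref{lambda is 0 for bipartite graphs} and \paritylemma, you re-derive the corresponding facts by direct bipartite degree counts, and you establish the existence of a perfect matching of $H_2$ avoiding its marker edge via Hall's condition rather than simply noting that it follows from $H_2$ being cubic and matching covered.
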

\begin{proof}
	We let $H_1$ and $H_2$ denote the marked $C$-components, and $e_1$ and $e_2$ their marker edges, respectively. By \cref{cc2 is bipartite iff each marked C-component is bipartite}, both $H_1$ and $H_2$ are bipartite.

	First, suppose that $(a,b)$ is \lm\ in $H$, and let $M$ denote an \mbox{$(a,b)$-matching}.
	Adjust notation so that $a \in V(H_1)$.
	By \paritylemma, $|C \cap M|$ is even.
	If $C\cap M = \emptyset$, let $M_1:=M \cap E(H_1)$;
	otherwise, let $M_1:=(M \cap E(H_1)) + e_1$.
	If $b \notin V(H_1)$, then $d_{M_1}(v)=1$ for each \mbox{vertex $v \in V(H_1)-a$}, and $d_{M_1}(a)=3$; this contradicts \cref{lambda is 0 for bipartite graphs}.
	Therefore, $b \in V(H_1)$. Observe that $M_1$ is \mbox{an $(a,b)$-matching} of $H_1$.

	Now, suppose that $a,b \in V(H_1)$ and $(a,b)$ is \lm\ in $H_1$, and let $M_1$ denote an \mbox{$(a,b)$-matching}.
	Let $M_2$~and~$N_2$ denote perfect matchings of $H_2$ containing $e_2$ and not containing $e_2$, respectively.
	If~$e_1 \in M_1$, let $M:=(M_1-e_1) + (M_2-e_2) + C$;
	otherwise, let $M:=M_1 + N_2$.
	In either case, the reader may verify that $M$ is an $(a,b)$-matching of $H$.
\end{proof}

We are now ready to prove our characterization of \lm\ pairs;
this, along with \cref{in a simple brace every pair is lm},
implies \cite[Proposition 4.4]{clz25}.

\begin{lemma}
	\label{characterization of pairs that are not lm}
Let $H[A,B]$ denote a connected bipartite cubic graph, and let $a \in A$ and $b \in B$. Then, $(a,b)$ is not \lm\ if and only if precisely one of the following holds: $(i)$ there exists a (unique) edge joining~$a$~and~$b$ and it participates in some $2$-cut, or $(ii)$ $a$ and $b$ are nonadjacent and there exists a (nontrivial) tight cut~$\partial(X)$ such that $a \in X^-$ and $b \in \overline{X}^-$.
\end{lemma}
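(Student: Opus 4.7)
The "if" direction is immediate from the previous lemmas: case (i) follows from \cref{lm pairs across a 2-cut} (any $2$-cut containing $ab$ puts $a$ and $b$ into different components of $H-C$), and case (ii) follows from \cref{lm pairs across a tight cut - lemma version}(i) directly. For the "only if" direction I plan to prove by induction on $n(H)$, with base cases $\Theta$ (where every pair is \lm, making the claim vacuous) and $\cfour$ (direct check: the pairs joined by a single edge lie in the unique $2$-cut $\{a_1b_2, a_2b_1\}$, while the pairs joined by a double edge are \lm). Throughout assume $(a,b)$ is not \lm, and split on whether $ab$ is an edge.

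In Case~A ($ab$ is an edge), I would first look for a $2$-cut of $H$ separating $a$ from $b$: if one exists, then $ab$ necessarily crosses it and lies in it, yielding (i). Otherwise, iterated application of \cref{lm pairs across a 2-cut} reduces the problem to the unique \con3 piece $H'$ (by \cref{unique 2cd}) that contains both $a$ and $b$; the pair is still not \lm\ and $ab$ is still an edge in $H'$. Applying the induction hypothesis to $H'$: clause (ii) is blocked by the non-adjacency requirement, and clause (i) cannot hold since $H'$ is \con3 and has no $2$-cut --- a contradiction, so this branch of Case~A cannot occur.

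In Case~B ($ab$ is not an edge), an $(a,b)$-matching forces all edges at $a$ and all edges at $b$ into $M$, and there are two possible failure modes. First, some $y \notin \{a,b\}$ may be joined to $\{a,b\}$ by two or more edges; WLOG $|N(a)|=2$, and then a degree count gives $|\partial(\{a\} \cup N(a))| = 3$, which \cref{characterization of tight cuts in bmcgs} certifies to be a tight cut with $a \in X^-$. Since $b \notin N(a)$ in Case~B, $b \in \overline{X}^-$, giving (ii). Otherwise $|N(a)|=|N(b)|=3$ and non-\lmity\ is equivalent to the non-matchability of $H[V']$ where $V' := V(H) - \{a,b\} - N(a) - N(b)$. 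Hall's theorem (via \cref{characterization of bmcgs}(iii)) then yields a non-empty $A'' \subseteq V'(A)$ with $|N_{H[V']}(A'')| < |A''|$ (the symmetric $V'(B)$ case is handled by enlarging with $b$ instead). Setting $A''' := A'' \cup \{a\}$, I would show $|N_H(A''')| = |A'''| + 1$, so that $X := A''' \cup N_H(A''')$ is a tight cut (again by \cref{characterization of tight cuts in bmcgs}) with $a \in X^-$; and $b \in \overline{X}^-$ because $A'' \cap N(b) = \emptyset$ and $b \notin N(a)$ together keep $b$ outside $N_H(A''')$.

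The main obstacle is the saturation $|N_H(A''')| = |A'''|+1$ in Case~B: matching-coveredness and the Hall violation only sandwich the value between $|A'''|+1$ and $|A'''|+2$. I plan to force equality by choosing $A''$ to maximize the deficiency $|A''| - |N_{H[V']}(A'')|$; by the standard submodularity of the neighborhood function, maximum-deficiency subsets of $V'(A)$ form a lattice, and the largest such $A''$ combined with the constraint $|N(a)|=3$ pins down the required equality. Finally, (i) and (ii) are mutually exclusive because the tight-cut condition forbids edges between $X^-$ and $\overline{X}^-$, so (ii) implies $ab \notin E(H)$ automatically.
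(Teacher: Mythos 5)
Your ``if'' direction and your treatment of nonadjacent pairs are sound and essentially reproduce the paper's argument: Hall's Theorem applied to $H-a-b-N(a)-N(b)$ yields a deficient set, which is enlarged by $a$ and closed under neighbourhoods to produce the tight cut. But the ``main obstacle'' you identify there is illusory: since $|N_{H[V']}(A'')|\leqslant |A''|-1$ and every neighbour of $A''':=A''\cup\{a\}$ lies in $N_{H[V']}(A'')\cup N_H(a)$ (the vertex $b$ is excluded because $A''$ avoids $N(b)$ and $b\notin N(a)$), you get $|N_H(A''')|\leqslant (|A''|-1)+3=|A'''|+1$ directly, and \cref{characterization of bmcgs}~$(iii)$ supplies the matching lower bound; no maximum-deficiency or submodularity argument is needed.

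The genuine gap is in Case~A. When no $2$-cut separates $a$ from $b$, you reduce to the \con3 piece $H'$ containing both and invoke the induction hypothesis --- but if $H$ is itself \con3, then $H'=H$ and the induction hypothesis is unavailable, so the argument is circular precisely in the case that carries all the content (namely, that in a \bcc3\ every edge yields a \lm\ pair, which is \cref{each edge is lm unless it participates in a 2-cut}). Your Case~B machinery does not rescue this: for adjacent $a$ and $b$ the same construction places $b$ inside $N_H(A''')=X^+$ rather than in $\overline{X}^-$, so clause $(ii)$ is not produced. The paper closes exactly this case by running the Hall construction for adjacent pairs as well, and then observing that two of the three edges of $b$ cross the resulting tight $3$-cut $\partial(X)$, whence $\partial(X-b)$ is a $2$-cut containing the (unique) edge $ab$; in the \con3 setting this gives the needed contradiction, and in general it is what delivers clause~$(i)$. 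You need to add this step, or some equivalent direct argument, to your plan.
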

\begin{proof}
	Observe that the reverse implication holds by \cref{lm pairs across a 2-cut} and \cref{lm pairs across a tight cut - lemma version} $(i)$.
	Conversely, assume that~$(a,b)$ is not \lm\ in $H$, whence $H$ is not $\Theta$.
	First suppose that at least one of~$a$~and~$b$ has precisely two neighbors; adjust notation so that $a$ has neighbors $b_1$ and $b_2$, and there are two edges joining $a$ and $b_2$.
	If $a$ and $b$ are nonadjacent then $\partial(X)$, where $X:=\{a,b_1,b_2\}$, satisfies $(ii)$.
	Now suppose that $a$ and $b$ are adjacent, and consider the $2$-cut $C:=\partial(\{a,b_2\})$.
	By the reverse implication of \cref{lm pairs across a 2-cut}, $(a,b_2)$ is \lm{}; ergo, $b=b_1$ and $(i)$ holds.

	Next, suppose that each of $a$ and $b$ has three distinct neighbors.
	Hence, the graph $J:=H-a-b-N_H(a)-N_H(b)$ is not matchable;
	consequently, by Hall's Theorem, there exists $Y' \subseteq V(J) \cap A$ such that $|N_J(Y')| \leqslant |Y'|-1$.
	We let $Y:=Y' + a$.
	By definition of $J$, the vertex~$b$ has no neighbors in $Y'$ in the graph $H$; consequently, $N_H(Y) = N_J(Y') \cupdot N_H(a)$. 
	Therefore, $|N_H(Y)| = |N_J(Y')| + 3 \leqslant (|Y'|-1)+3 = |Y|+1$;
	by \cref{characterization of bmcgs} $(iii)$, $|N_H(Y)|=|Y|+1$.
	Observe that $\partial_H(X)$, where $X:=Y \cup N_H(Y)$, is a tight cut as well as a $3$-cut.
	If $a$ and $b$ are nonadjacent, then~$\partial_H(X)$ is the desired tight cut that satisfies $(ii)$.
	Otherwise, $b \in X$ and $|\partial_H(b) \cap \partial_H(X)| = 2$;
	note that $\partial_H(X-b)$ is the desired $2$-cut that satisfies $(i)$.
\end{proof}

The below intermediate result is implied by the above.
Note that it immediately yields a linear bound of $\frac{3n}{2}$ on $\rho$ for \con3 bipartite cubic graphs, but we shall find it useful to prove our aforementioned stronger lower bounds.

\begin{corollary}\label{each edge is lm unless it participates in a 2-cut}
	In a connected bipartite cubic graph $H[A,B]$, each edge $e:=ab$ satisfies precisely one of the following: $(i)$ either $e$ participates in a $2$-cut, or otherwise $(ii)$ the pair $(a,b)$ is \lm. \qed
\end{corollary}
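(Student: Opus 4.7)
The plan is to derive this corollary directly from the preceding characterization lemma by observing that the hypothesis $e=ab \in E(H)$ eliminates one of the two cases in that lemma, and then to argue mutual exclusivity of the two alternatives using \cref{lm pairs across a 2-cut}.

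First, I would note that the immediately preceding lemma asserts $(a,b)$ is not \lm\ if and only if either $(i)$ there is a unique edge joining $a$ and $b$ that lies in some $2$-cut, or $(ii)$ $a$ and $b$ are nonadjacent and there is a tight cut $\partial(X)$ with $a \in X^-$ and $b \in \overline{X}^-$. In our setting the edge $e=ab$ exists, so $a$ and $b$ are adjacent and alternative $(ii)$ is vacuous. Hence $(a,b)$ fails to be \lm\ if and only if $e$ participates in some $2$-cut, which gives one direction of the claimed dichotomy together with the fact that at least one of $(i)$ or $(ii)$ of the corollary must hold.

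To finish, I would verify that the two alternatives of the corollary cannot hold simultaneously, i.e.\ that if $e$ participates in a $2$-cut $C$ then $(a,b)$ is not \lm. This is immediate from \cref{lm pairs across a 2-cut}: since $e \in C$, the endpoints $a$ and $b$ lie in the two different components of $H-C$, hence they belong to distinct marked $C$-components, and the lemma then forbids $(a,b)$ from being \lm.

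There is no real obstacle here; the content of the corollary is essentially a specialization of the previous lemma to the adjacent case, and the only extra ingredient is the trivial consequence of \cref{lm pairs across a 2-cut} that ensures mutual exclusivity. If anything, the one point to be careful about is the \emph{uniqueness} clause in case $(i)$ of the preceding lemma: one should simply remark that if $H$ has two parallel edges joining $a$ and $b$, then together with any third edge of $H$ incident to~$a$ they form no $2$-cut, and also that $(a,b)$ is trivially \lm\ via the two parallel edges plus a perfect matching of $H-a-b$ provided by \cref{characterization of bmcgs}$(ii)$, so the multiple-edge case is consistent with the dichotomy.
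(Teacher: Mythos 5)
Your proposal is correct and matches the paper's (implicit) proof: the corollary is indeed just the specialization of the preceding characterization lemma to adjacent pairs, with mutual exclusivity supplied by \cref{lm pairs across a 2-cut} since the two endpoints of an edge in a $2$-cut $C$ lie in distinct components of $H-C$. Your closing remark about parallel edges is the only wobble --- ``together with any third edge incident to $a$ they form no $2$-cut'' is not the relevant claim (one needs that the two parallel edges alone cannot constitute a $2$-cut, which follows from a bipartite degree count) --- but this aside is not needed for the argument, since the uniqueness clause of the lemma together with your exclusivity step already covers that case.
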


We now redirect our focus back to proving lower bounds on $\rho$.
The following is simply a restatement of \cref{lm pairs across a tight cut - lemma version} in a more compact form.

\begin{theorem}\label{lm pairs across a tight cut - set version}
	For any tight cut $\partial(X)$ of a connected bipartite cubic graph $H[A,B]$, adjusting notation so that $X^+ \subseteq B$, the following holds:
	\[\Rho(H) = \big\{ (a,b) \in \Rho(H_1) \mid a \neq \xbar\big\}~\cupdot~\big\{(a,b) \in \Rho(H_2) \mid b \neq x\big\}~\cupdot \] \[\big\{ (a,b) \mid  (\xbar,b) \in \Rho(H_1) \text{ and } (a,x) \in \Rho(H_2) \big\} \]
	where $H_1:=H/\Xbar \rightarrow \xbar$ and $H_2:=H/X \rightarrow x$. \qed
\end{theorem}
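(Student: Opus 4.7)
The plan is to partition $A \times B$ into four ``quadrants'' determined by the tight cut $\partial(X)$, apply the three clauses of \cref{lm pairs across a tight cut - lemma version} one quadrant at a time, and verify that the resulting three subsets of $\Rho(H)$ are exactly the three sets on the right-hand side and are pairwise disjoint.

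First I would pin down the color-class bookkeeping. Since $X^+ \subseteq B$ by convention, and a matchable bipartite graph satisfies $|A|=|B|$, \cref{characterization of tight cuts in bmcgs} forces $X^+ = X\cap B$, $X^- = X\cap A$, $\Xbar^+ = \Xbar\cap A$, $\Xbar^- = \Xbar\cap B$, and further implies that every cut edge joins $X^+$ to $\Xbar^+$. Consequently the contraction vertex $\xbar$ of $H_1$ has all its neighbors in $X^+ \subseteq B$, so $\xbar \in A(H_1)$; symmetrically, $x \in B(H_2)$. In particular, a pair $(\alpha,\beta) \in \Rho(H_1)$ either has $\alpha \in X^-$ and $\beta \in X^+$ (when $\alpha \neq \xbar$) or is of the form $(\xbar, \beta)$ with $\beta \in X^+$; the analogous dichotomy holds in $H_2$.

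Next, every $(a,b) \in A \times B$ lies in exactly one of the quadrants $X^- \times X^+$, $X^- \times \Xbar^-$, $\Xbar^+ \times X^+$, $\Xbar^+ \times \Xbar^-$. Clause~(i) of \cref{lm pairs across a tight cut - lemma version} kills the quadrant $X^- \times \Xbar^-$. Clause~(ii), as stated, handles $X^- \times X^+$ and yields exactly the first set on the RHS, namely the \lm\ pairs of $H_1$ whose first coordinate is not $\xbar$. The same clause~(ii), applied with the roles of $X$ and $\Xbar$ (and of $H_1$ and $H_2$) interchanged---legitimate because the hypothesis ``$\partial(X)$ is a tight cut'' is shore-symmetric---handles $\Xbar^+ \times \Xbar^-$ and yields the second set. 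Finally, clause~(iii) handles $\Xbar^+ \times X^+$ and yields the third set.

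Disjointness is immediate by inspecting shores: the first set has $a \in X^-$, the second has $a \in \Xbar^+$ and $b \in \Xbar^-$, and the third has $a \in \Xbar^+$ and $b \in X^+$, so any two sets disagree on the shore of at least one coordinate. There is no real obstacle here; the entire argument is a bookkeeping rewrite of the preceding lemma. The only step worth a moment's care is verifying that $\xbar \in A(H_1)$ and $x \in B(H_2)$, since this is what makes the exclusions ``$a \neq \xbar$'' and ``$b \neq x$'' in the first two sets precisely cut out the pairs that, after lifting back to $H$, would fall outside $A \times B$ or into the forbidden quadrant of clause~(i).
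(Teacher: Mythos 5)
Your proof is correct and matches the paper's intent exactly: the paper states this theorem as ``simply a restatement'' of \cref{lm pairs across a tight cut - lemma version} and offers no written proof, and your quadrant-by-quadrant bookkeeping (including the verification that $\xbar$ lies in the $A$-side of $H_1$ and $x$ in the $B$-side of $H_2$, which justifies the exclusions $a \neq \xbar$ and $b \neq x$) is precisely the omitted routine argument.
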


Using the above theorem, one may easily compute the number of \lm\ pairs in~$H$ using information pertaining to \lm\ pairs of its $\partial(X)$-contractions. To this end, for a vertex $u$ of $H$, we let $\numlmpartners{H}{u}$ denote the number of vertices $v \in V(H)$ such that the pair $(u,v)$ is \lm\ in $H$. The statement below is an immediate consequence of the above.

\begin{corollary}\label{lm pairs across a tight cut - count version}
	For any tight cut $\partial(X)$ of a connected bipartite cubic graph $H$, the following holds:
	\[\rho(H)~=~\rho(H_1)-\numlmpartners{H_1}{\xbar}~+~\rho(H_2)-\numlmpartners{H_2}{x}~+~\numlmpartners{H_1}{\xbar}\numlmpartners{H_2}{x}\]
	where $H_1:=H/\Xbar \rightarrow \xbar$ and $H_2:=H/X \rightarrow x$. \qed
\end{corollary}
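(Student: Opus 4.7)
The plan is to apply \cref{lm pairs across a tight cut - set version} and count the cardinalities of its three disjoint pieces; the proof is almost entirely combinatorial bookkeeping, so the only thing that needs care is tracking the color classes of the contraction vertices.

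First I would verify that, in $H_1$, the contraction vertex $\xbar$ lies in the color class $A$, and in $H_2$, the vertex $x$ lies in the color class $B$. Since $X^+ \subseteq B$ by the stated convention and $|A| = |B|$ (as $H$ is a matchable bipartite graph), the identity $|X^+| = |X^-| + 1$ from \cref{characterization of tight cuts in bmcgs} forces $\Xbar^+ \subseteq A$. Every neighbor of $\xbar$ in $H_1$ corresponds to an edge of $\partial_H(X)$, whose $X$-endpoint lies in $X^+ \subseteq B$; hence $\xbar \in A(H_1)$, and by an entirely symmetric argument $x \in B(H_2)$.

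Second, I would enumerate the three sets in \cref{lm pairs across a tight cut - set version}. The first set $\{(a,b) \in \Rho(H_1) \mid a \neq \xbar\}$ is obtained from $\Rho(H_1)$ by deleting those \lm\ pairs whose first coordinate is $\xbar$; by the definition of $\numlmpartners{H_1}{\xbar}$ and the placement of $\xbar$ in $A(H_1)$, there are exactly $\numlmpartners{H_1}{\xbar}$ such pairs, so this set has cardinality $\rho(H_1) - \numlmpartners{H_1}{\xbar}$. A symmetric argument yields $\rho(H_2) - \numlmpartners{H_2}{x}$ for the second set. The third set $\{(a,b) \mid (\xbar,b) \in \Rho(H_1) \text{ and } (a,x) \in \Rho(H_2)\}$ is described by two independent conditions, one on $b \in X^+$ and one on $a \in \Xbar^+$; it therefore factors as a Cartesian product of sets of sizes $\numlmpartners{H_1}{\xbar}$ and $\numlmpartners{H_2}{x}$, giving $\numlmpartners{H_1}{\xbar}\,\numlmpartners{H_2}{x}$ pairs.

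Summing the three cardinalities and invoking the disjointness of the union in \cref{lm pairs across a tight cut - set version} yields the stated identity. There is no real obstacle to this argument; the only point worth stating explicitly is that the third piece genuinely factors as a Cartesian product, which is an immediate consequence of the fact that the conditions on $a$ and $b$ in the set-theoretic description are independent.
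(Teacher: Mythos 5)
Your proposal is correct and follows exactly the route the paper intends: the corollary is stated with a \qed precisely because it is the cardinality count of the three disjoint sets in \cref{lm pairs across a tight cut - set version}, which is what you carry out. Your extra care in placing $\xbar$ in the color class $A(H_1)$ and $x$ in $B(H_2)$ (so that $\numlmpartners{H_1}{\xbar}$ and $\numlmpartners{H_2}{x}$ really do count the deleted pairs) is a detail the paper leaves implicit, but it is accurate and does not change the argument.
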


By \cref{in a simple brace every pair is lm}, each simple cubic brace satisfies $\rho = \b + 3b' - 3$; we let $\braces$ denote the family that comprises them. Now, we define another family $\K$ of \bcc3s\ whose members satisfy the bound with equality (as proven below): $\K$ contains $K_{3,3}$, and if $H \in \K$ then $(H \odot K_{3,3})_v \in \K$ for any $v \in V(H)$ such that $\numlmpartners{H}{v}=3$. For instance, the graph $K_{3,3} \odot K_{3,3}$, shown in \cref{fig: k33 splice k33}, is a member if $\K$.

\begin{proposition}\label{rho lower bound 3-conn - easy direction}
	The family $\K$ is infinite, and each of its members, say $H$, is a \bcc3 that satisfies $\rho(H) = \b(H) + 3b'(H) - 3 = 3n(H) -9$.
\end{proposition}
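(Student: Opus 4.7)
The plan is to prove all four claims (infinitude of $\K$, \bcc3-ness, and both equalities) simultaneously by induction on the number of splicing operations applied to $K_{3,3}$ to obtain $H$. For the base case $H=K_{3,3}$, one has $n=6$, $b'=1$, and by \cref{in a simple brace every pair is lm}, $\rho(K_{3,3})=\b(K_{3,3})=(6/2)^2=9$; both equalities then reduce to $9=9$.

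For the inductive step, assume $H\in\K$ satisfies all claims, let $v\in V(H)$ with $\numlmpartners{H}{v}=3$, and set $H':=(H\odot K_{3,3})_v$, choosing the splicing vertex $w$ of $K_{3,3}$ in the color class opposite to $v$ so that $H'$ remains bipartite. Then $H'$ is cubic bipartite, and \cref{splicing of two 3-conn cubic graphs is 3-conn} supplies \mbox{$3$-connectedness}. Writing $X:=V(H)-v$, I verify via \cref{characterization of tight cuts in bmcgs} that the splicing cut $\partial_{H'}(X)$ is tight: $|X^+|=|X^-|+1$ follows from $H$ being cubic bipartite and losing the single vertex $v$, and no edge joins $X^-$ to $\Xbar^-$ since all such edges lie inside the individual cubic bipartite pieces. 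The $\partial(X)$-contractions are $H_1=H$ and $H_2=K_{3,3}$, so \cref{tcd uniqueness} gives $\bstar(H')=\bstar(H)\cupdot\{K_{3,3}\}$, whence $b'(H')=b'(H)+1$ and $\b(H')=\b(H)+9$; clearly $n(H')=n(H)+4$. Applying \cref{lm pairs across a tight cut - count version} with $\numlmpartners{H}{v}=3$ (by choice of $v$) and $\numlmpartners{K_{3,3}}{x}=3$ (by \cref{in a simple brace every pair is lm}) then yields $\rho(H')=\rho(H)-3+9-3+9=\rho(H)+12$, and both target equalities propagate from $H$ to $H'$ by straightforward arithmetic.

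To close the induction and conclude that $\K$ is infinite, I must exhibit some $u\in V(H')$ with $\numlmpartners{H'}{u}=3$, ensuring the recursive step can always be taken. The candidates I plan to use are the two vertices of $\Xbar^-$: for such a $u$, \cref{lm pairs across a tight cut - lemma version}$(i)$ rules out all partners in $X^-$, while the symmetric counterpart of part $(ii)$ applied to the $\Xbar$-shore, together with the fact that $K_{3,3}$ is a brace, makes every vertex of $\Xbar^+$ a valid partner; hence $\numlmpartners{H'}{u}=|\Xbar^+|=3$.

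The step I expect to require the most care is invoking the symmetric counterpart of \cref{lm pairs across a tight cut - lemma version}$(ii)$ for pairs lying entirely inside $\Xbar$: the stated lemma is phrased for pairs with one coordinate in $X^-$, but the argument is manifestly symmetric in $X$ and $\Xbar$, and a brief justification will suffice. Everything else reduces to direct applications of the cited theorems and arithmetic.
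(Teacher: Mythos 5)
Your proof is correct and follows essentially the same route as the paper's: induction driven by \cref{lm pairs across a tight cut - count version}, the identical arithmetic $\rho(H')=\rho(H)+12$, and the same candidate vertices (those of $\Xbar^-$) for the new vertex with exactly three \lm\ partners. The only (valid) variation is that you certify $\numlmpartners{H'}{u} \geqslant 3$ via a symmetrized form of \cref{lm pairs across a tight cut - lemma version} $(ii)$ together with \cref{in a simple brace every pair is lm}, whereas the paper obtains the same lower bound more directly from \cref{each edge is lm unless it participates in a 2-cut}.
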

\begin{proof}
	Clearly, $H$ is bipartite and cubic, and the fact that $H$ is \con3 follows from \cref{splicing of two 3-conn cubic graphs is 3-conn}.
	To prove that~$\K$ is an infinite family, the reader may observe that it suffices to show that each $H \in \K$ has a vertex~$v$ such that $\numlmpartners{H}{v}=3$; additionally, we need to establish that $\rho(H) = \b(H) + 3b'(H) - 3 = 3n(H) -9$.
	We proceed by induction on the order.

	If $H=K_{3,3}$, the reader may easily verify that the desired conclusions hold.
	Now, suppose that $H$ is not $K_{3,3}$.
	By definition of $\K$, note that $H$ has a tight cut $\partial(X)$ such that $H_1:=H/\Xbar \rightarrow \xbar$ belongs to $\K$ and $\numlmpartners{H_1}{\xbar}=3$, whereas $H_2:=H/X \rightarrow x$ is $K_{3,3}$.
	By \cref{lm pairs across a tight cut - lemma version} $(i)$, for any~$a \in \Xbar^-$ and~$b \in X^-$, the pair~$(a,b)$ is not \lm\ in $H$; consequently, $\numlmpartners{H}{a} \leqslant |\Xbar^+| = 3$.
	However, since $H$ is \con3, by invoking \cref{each edge is lm unless it participates in a 2-cut}, we infer that $\numlmpartners{H}{a} = 3$.
	As noted earlier, this proves that~$\K$ is an infinite family.
	Now, by \cref{lm pairs across a tight cut - count version}:
	\begin{equation*}
	\begin{split}
	\begin{array}[b]{ c @{} c @{} c @{} c @{} c @{} c @{} c @{} c @{} c @{} c @{} c }
		\rho(H)~
		&~=~& ~\rho(H_1)~
		&~-~& \numlmpartners{H_1}{\xbar}
		&~+~& \rho(H_2)
		&~-~& \numlmpartners{H_2}{x}
		&~+~& \numlmpartners{H_1}{\xbar} \numlmpartners{H_2}{x} \\
		&~=~& \rho(H_1) &~-~& 3  &~+~& 9 &~-~& 3  &~+~& 3 \,\cdotp 3 \\
		&~=~& \rho(H_1) &~+~& 12 \\
	\end{array}
	\end{split}
	\end{equation*}

	Observe that $n(H) = n(H_1) + 4$, $b'(H)=b'(H_1)+1$ and $\b(H)=\b(H_1)+9$.
	By the induction hypothesis, $\rho(H_1) = \b(H_1) + 3b'(H_1) - 3$; consequently, $\rho(H)=(\b(H_1)+3b'(H_1)-3)+12=\b(H)+3b'(H)-3$.
	Also, by the induction hypothesis, $\rho(H_1) =  3n(H_1) -9$; consequently, $\rho(H) = (3n(H_1)-9)+12=3n(H)-9$.
	This completes the proof.
\end{proof}

We now state and prove a technical lemma that we shall find useful in proving our lower bounds.

\begin{lemma}\label{technical lemma - lower bound on pq-p-q}
	Every pair of integers $p$ and $q$, each of which is at least three, satisfies $pq-p-q~\geqslant~3$, and equality holds if and only if $p=q=3$. \qed
\end{lemma}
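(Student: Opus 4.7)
The plan is to prove this by the standard factorization trick $pq - p - q = (p-1)(q-1) - 1$, which turns the inequality and the equality condition into an elementary statement about a product of two integers each of which is at least two.

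More precisely, I would first rewrite the quantity as
\[
	pq - p - q ~=~ (p-1)(q-1) - 1.
\]
Since $p \geqslant 3$ and $q \geqslant 3$, both factors $p-1$ and $q-1$ are integers that are at least $2$, so $(p-1)(q-1) \geqslant 2 \cdot 2 = 4$, which yields $pq - p - q \geqslant 3$ as required.

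For the equality characterization, note that $pq - p - q = 3$ forces $(p-1)(q-1) = 4$ with $p-1, q-1 \geqslant 2$. Since $4$ is the only way to write $4$ as a product of two integers each at least $2$ (namely $2 \cdot 2$), we must have $p - 1 = q - 1 = 2$, that is, $p = q = 3$. Conversely, $p = q = 3$ clearly gives $pq - p - q = 9 - 3 - 3 = 3$, completing the proof.

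There is no real obstacle here; the only thing to watch is that one must use the integrality of $p$ and $q$ for the equality direction, since over the reals $(p-1)(q-1) = 4$ admits other solutions with both factors at least $2$ (for instance $p-1 = 4, q-1 = 1$ is excluded by the lower bound, but $p - 1 = 2.5, q - 1 = 1.6$ would be, which is why integrality is needed to force the pairing $2\cdot 2$).
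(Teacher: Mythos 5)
Your proof is correct, and it takes a slightly different (though equally elementary) route from the paper's. The paper expands around the equality point, writing $pq-p-q=(p-3)(q-3)+2(p-3)+2(q-3)+3$, so that both the bound and the equality characterization drop out at once: the three extra summands are nonnegative, and they all vanish precisely when $p=q=3$. You instead use the identity $pq-p-q=(p-1)(q-1)-1$ and then analyse the factorizations of $4$; this is just as short for the inequality, but costs you an extra (easy) argument for the equality case. One small correction to your closing remark: integrality is \emph{not} actually needed for the equality direction. If $x,y\geqslant 2$ are reals with $xy=4$, then $x=4/y\leqslant 2$, forcing $x=y=2$; your proposed real counterexample $p-1=2.5$, $q-1=1.6$ violates the hypothesis $q-1\geqslant 2$ and so is already excluded. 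The proof itself is unaffected, since your factorization argument is valid for integers, but the claim that the lemma would fail over the reals is false.
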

\begin{proof}
	Clearly, both $(p-3)$ and $(q-3)$ are at least zero.
	Therefore, the lemma follows from the observation that $pq-p-q=(p-3)(q-3)+2(p-3)+2(q-3)+3$. 
\end{proof}

Now, we are ready to establish our lower bounds on $\rho$ for \bcc3s (stated in \cref{rho lower bound 3-conn - intro}), and characterize the corresponding tight examples.

\begin{theorem}\label{rho lower bound 3-conn}
	Every \bcc3\ $H$ satisfies:
	\[ \rho(H) ~\geqslant~ \b(H) + 3b'(H) - 3 ~\geqslant~ 3n(H) -9\]
	Furthermore, (i) the first inequality holds with equality if and only if $H \in \braces \cup \K$, and (ii)~equality holds everywhere if and only if $H \in \K$.
\end{theorem}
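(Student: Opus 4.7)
The easy direction of $(i)$ is immediate: for $H \in \braces$, \cref{in a simple brace every pair is lm} combined with $b'(H) = 1$ yields $\rho(H) = \left(\frac{n(H)}{2}\right)^2 = \b(H) + 3b'(H) - 3$; for $H \in \K$, both equalities are delivered by \cref{rho lower bound 3-conn - easy direction}. Within $\braces$, the second equality $(n/2)^2 = 3n-9$ reduces to $(n-6)^2 = 0$, isolating $H = K_{3,3} \in \K$. This settles the reverse implications in both $(i)$ and $(ii)$.

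For the lower bounds and the forward implications, the plan is induction on $n(H)$. The base case occurs when $H$ has no nontrivial tight cut; then $H$ is a brace and, being bipartite cubic 3-connected of order at least six, $H$ is simple, so $H \in \braces$, and both bounds reduce to the computation above. In the inductive step, $H$ has a nontrivial tight cut $\partial(X)$, which is a $3$-cut by \cref{every tight cut of a cc2 is a 3-cut}. Set $H_1 := H/\Xbar$ and $H_2 := H/X$; by \cref{3-cut contractions preserve connectivity} these are 3-connected bipartite cubic of strictly smaller order, and a short check rules out $H_i = \cfour$ (whose multi-edge structure would produce a $2$-cut in $H$, contradicting its 3-connectedness). \cref{tcd uniqueness} then yields $\b(H) = \b(H_1) + \b(H_2)$ and $b'(H) = b'(H_1) + b'(H_2)$, while $n(H) = n(H_1) + n(H_2) - 2$.

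Setting $p := \numlmpartners{H_1}{\xbar}$ and $q := \numlmpartners{H_2}{x}$, \cref{lm pairs across a tight cut - count version} gives $\rho(H) = \rho(H_1) + \rho(H_2) + pq - p - q$. Since neither $H_i$ has a $2$-cut, applying \cref{each edge is lm unless it participates in a 2-cut} at the contraction vertex yields $p, q \geqslant 3$; then \cref{technical lemma - lower bound on pq-p-q} gives $pq - p - q \geqslant 3$. Combined with the inductive bound on $\rho(H_i)$, this proves $\rho(H) \geqslant \b(H) + 3b'(H) - 3$, and summing the second inequality over $H_1, H_2$ (and using $n(H_1) + n(H_2) = n(H) + 2$) yields $\b(H) + 3b'(H) - 3 \geqslant 3n(H) - 9$.

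For the tightness characterizations, I trace the equalities backwards. Equality in the first inequality forces $\rho(H_i) = \b(H_i) + 3b'(H_i) - 3$ (so $H_i \in \braces \cup \K$ by induction) and $p = q = 3$ (by the lemma). If some $H_i$ lies in $\braces$, then every pair of $H_i$ is \lm, so $3 = \numlmpartners{H_i}{v_i} = n(H_i)/2$ forces $H_i = K_{3,3} \in \K$, and then the defining recursion of $\K$ yields $H \in \K$ directly. The remaining subcase, in which both $H_1, H_2 \in \K \setminus \{K_{3,3}\}$, is the main obstacle I foresee: I plan to resolve it by invoking the uniqueness of the tight cut decomposition (\cref{tcd uniqueness}) to reselect a different nontrivial tight cut of $H$ that extracts a leaf $K_{3,3}$ of the TCD --- such a leaf must exist, since every $\K$-member other than $K_{3,3}$ contains a $K_{3,3}$ leaf in its TCD --- and reapplying the inductive argument to this reselected cut reduces to the previous subcase, yielding $H \in \K$. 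Tightness in $(ii)$ is handled analogously: equality in the second inequality forces $H_1, H_2 \in \K$ by induction, and the same reselection argument concludes $H \in \K$.
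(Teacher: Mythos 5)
Your treatment of the two inequalities and of the reverse implications is essentially the paper's own: induction on the order, an arbitrary nontrivial tight cut, \cref{lm pairs across a tight cut - count version} combined with \cref{each edge is lm unless it participates in a 2-cut} and \cref{technical lemma - lower bound on pq-p-q}, and the additivity of $\b$, $b'$ and $n$ across the cut. Two smaller remarks before the main one: your base case assumes every brace arising here has order at least six, which misses $\Theta$ --- a \bcc3 under this paper's conventions, with $\rho(\Theta)=1 > -3 = \b(\Theta)+3b'(\Theta)-3 = 3n(\Theta)-9$ --- so it must be verified separately (it never reappears inside the induction, since both contractions of a nontrivial tight $3$-cut have order at least six once $\cfour$ is excluded); and your separate induction for the tightness in $(ii)$ is unnecessary, since equality everywhere already places $H$ in $\braces\cup\K$ by $(i)$, and the brace case then forces $H=K_{3,3}$.

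The genuine gap is in the forward implication of $(i)$, precisely where you anticipate it. With an arbitrary tight cut you land in the subcase $H_1,H_2\in\K\setminus\{K_{3,3}\}$, where the recursive definition of $\K$ (splice a $\K$-member with $K_{3,3}$ at a vertex $v$ with $\numlmpartners{H_1}{v}=3$) gives you nothing, and your proposed repair --- ``reselect a tight cut that extracts a $K_{3,3}$ leaf of the TCD'' --- is not justified as stated. A leaf of the tight cut decomposition is produced at the end of a recursion whose later cuts are tight cuts of intermediate contractions, not of $H$ itself, so its existence as a \emph{contraction of $H$ along a single tight cut} is exactly what needs proving; and your stated reason (``every $\K$-member other than $K_{3,3}$ contains a $K_{3,3}$ leaf'') appeals to the very membership $H\in\K$ you are trying to establish. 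What is needed is the standard fact that every non-brace \bmcg\ admits a \emph{peripheral} tight cut, one of whose contractions is itself a brace; the paper simply chooses such a cut for the tightness analysis, whereupon $H_2$ is a simple cubic brace, $\numlmpartners{H_2}{x}=3$ forces $H_2=K_{3,3}$, $\numlmpartners{H_1}{\xbar}=3$ rules out $H_1\in\braces\setminus\{K_{3,3}\}$, and $H\in\K$ falls out of the definition, with no residual subcase. Your argument can be completed the same way --- either choose the peripheral cut from the outset, or prove its existence by minimizing a shore over all nontrivial tight cuts and then use $\bstar(H)=\bstar(H_1)\cupdot\bstar(H_2)$, all of whose members are $K_{3,3}$, to identify the peripheral brace --- but that step is the crux of the characterization and is currently missing.
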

\begin{proof}
	By \cref{in a simple brace every pair is lm,rho lower bound 3-conn - easy direction}, it suffices to prove that the inequalities hold, and that the forward implications of $(i)$ and $(ii)$ hold.
	We proceed by induction on the order. If $H=\Theta$, the conclusion may be verified easily.

	Now, suppose that $H \in \braces$; by \cref{in a simple brace every pair is lm}, \mbox{$\rho(H) = \b(H)=\b(H)+3b'(H)-3$}.
	For any real number~$n$, note that $n \geqslant 6$ if and only if $\ds \left(\frac{n}{2}\right)^2 \geqslant 3n-9$, and equality holds if and only if $n=6$. Consequently, $\rho(H)=\b(H) \geqslant 3n(H)-9$ and if equality holds, then $H = K_{3,3}$, which belongs to $\K$. In summary, the desired conclusions hold.

	Henceforth, suppose that $H \notin \braces$.
	We invite the reader to observe that it suffices to prove that the inequalities hold, and that the forward implication of $(i)$ holds.
	As $H$ is~not a brace, let $\partial(X)$ be any nontrivial tight cut, and let $H_1:=H/\Xbar \rightarrow \xbar$ and $H_2:=H/X \rightarrow x$. By \cref{characterization of tight cuts in bmcgs,3-cut contractions preserve connectivity,every tight cut of a cc2 is a 3-cut}, both $H_1$ and $H_2$ are \bcc3s; by the induction hypothesis,
	$\rho(H_i) \geqslant \b(H_i) + 3b'(H_i) - 3 \geqslant 3n(H_i)-9$ for each $i \in \{1,2\}$.

	By \cref{lm pairs across a tight cut - count version}, $\rho(H) = \rho(H_1) + \rho(H_2) + \numlmpartners{H_1}{\xbar}\numlmpartners{H_2}{x} -\numlmpartners{H_1}{\xbar} -\numlmpartners{H_2}{x}$.
	By \cref{each edge is lm unless it participates in a 2-cut}, both $\numlmpartners{H_1}{\xbar}$ and $\numlmpartners{H_2}{x}$ are at least three; consequently, by \cref{technical lemma - lower bound on pq-p-q}, $\rho(H) \geqslant \rho(H_1) + \rho(H_2) + 3$ and equality holds if and only if $\numlmpartners{H_1}{\xbar} = \numlmpartners{H_2}{x}=3$. Now, putting all of this together:
	\begin{equation*}
		\begin{split}
			\begin{array}[b]{ c @{} c @{} c @{} c @{} c @{} c @{} c @{} c @{} c @{} c @{} c @{} c @{} c @{} c @{} c @{} c @{} c}
				\rho(H) &~\geqslant~& \rho(H_1) &~+~& \rho(H_2) &~+~& 3 \\
				&~\geqslant~& (\b(H_1) + 3b'(H_1) - 3) &~+~& (\b(H_2) + 3b'(H_2) - 3) &~+~& 3 \\
				&~=~& \underbrace{(\b(H_1)+\b(H_2))} &~+~& 3\underbrace{(b'(H_1)+b'(H_2))} &~-~& 3 \\
				&~=~& \b(H) &~+~& 3b'(H) &~-~& 3
			\end{array}
		\end{split}
	\end{equation*}
	as well as:
	\begin{equation*}
		\begin{split}
			\begin{array}[b]{ c @{} c @{} c @{} c @{} c @{} c @{} c @{} c @{} c @{} c @{} c @{} c @{} c @{} c @{} c @{} c @{} c}
				\b(H) + 3b'(H) - 3 &~~=~~& (\b(H_1)+3b'(H_1)-3) &~~+~~& (\b(H_2) + 3b'(H_2) - 3) &~+~& 3 \\
				&~~\geqslant~~& (3n(H_1) - 9) &~~+~~& (3n(H_2) - 9) &~+~& 3 \\
				&~~=~~& 3\underbrace{(n(H_1)+n(H_2)-2)} &~+~& 6-9-9+3~~~~~~~~~ \\
				&~~=~~& 3n(H) &~-~& \hspace{-10.5mm} 9
			\end{array}
		\end{split}
	\end{equation*}

	In summary, we have proved the inequalities, and as noted earlier, it remains to prove the forward implication of $(i)$. To this end, we choose $\partial(X)$ to be a peripheral tight cut (of~$H$) so that $H_2$ is a brace. Assume that $\rho(H) = \b(H)+3b'(H)-3$. It follows from the earlier chain of inequalities that $\rho(H) = \rho(H_1) + \rho(H_2) + 3$ and $\rho(H_1)=\b(H_1)+3b'(H_1)-3$. The latter equation implies, by the induction hypothesis, that $H_1 \in \braces \cup \K$, whereas the former one implies, as per our earlier discussion, that $\numlmpartners{H_1}{\xbar} = \numlmpartners{H_2}{x}=3$.
	Note that for any~$J \in \braces - K_{3,3}$ and $v \in V(J)$, \cref{in a simple brace every pair is lm} implies that $\numlmpartners{J}{v} \geqslant 4$. Applying this observation to $H_1$ as well as~$H_2$, the reader may infer that $H_1 \in \K$ and~$H_2=K_{3,3}$; consequently, $H \in \K$.

	This completes the proof of \cref{rho lower bound 3-conn}.
\end{proof}

In order to prove our extension of the above theorem to all connected bipartite cubic graphs, we shall use $2$-cuts as an induction tool.
We proceed to define families $\K'$ and $\L$ of connected bipartite cubic graphs that appear in the characterizations of tight examples for our lower bounds. The family~$\K'$ comprises those graphs whose \mbox{$3$-connected} pieces belong to $\K \cup \{\Theta\}$, whereas $\L$ comprises those graphs whose \mbox{$3$-connected} pieces belong to $\braces \cup \K \cup \{\Theta\}$.
For instance, the graph shown in \cref{fig: k33 glue theta glue k33} is a member of both $\K'$ and $\L$, whereas the graph shown in \cref{fig: script l prime example} is a member of $\L$ but not of $\K'$.

\fig{A member of $\L$ but not of $\K'$}{fig: script l prime example}{\begin{tikzpicture}[scale=1.35]

\tikzmath{
	\one = 0.365;
	\two = 0.825;
}


\node[vtx] (a1) at (-\two,  \two) {};
\node[vtx-white] (a2) at ( \two,  \two) {};
\node[vtx] (a3) at ( \two, -\two) {};
\node[vtx-white] (a4) at (-\two, -\two) {};

\node[vtx-white] (b1) at (-\one,  \one) {};
\node[vtx] (b2) at ( \one,  \one) {};
\node[vtx-white] (b3) at ( \one, -\one) {};
\node[vtx] (b4) at (-\one, -\one) {};

\foreach \i in {1,2,3}{
	\node[vtx-white] (x\i) at (1.25*\i+0.75,-0.825) {};
	\node[vtx] (y\i) at (1.25*\i+0.75,0.825) {};
}


\foreach \i in {1,2,3} {
	\foreach \j in {1,2,3} {
		\ifnum \i=1
			\ifnum \j=1
				\draw (x\i) -- (a3);
				\draw (y\j) -- (a2);
			\else
				\draw (x\i) -- (y\j);
			\fi
		\else
			\draw (x\i) -- (y\j);
		\fi
	}
}

\draw (a3) -- (a4) -- (a1) -- (a2);
\draw (b1) -- (b2) -- (b3) -- (b4) -- (b1);
\foreach \i in {1,...,4}
	\draw (b\i) -- (a\i);

\end{tikzpicture}}

The following is an easy consequence of \cref{rho lower bound 3-conn - easy direction}, \cref{lm pairs across a 2-cut}, \cref{bstar across even 2-cut} and \cref{theta thetabar and n-nonbip over a 2 cut}.

\begin{proposition}\label{rho lower bound 2-conn - easy direction}
	Every member $H$ of $\L$ satisfies $\rho(H) = \b(H) + 3b'(H) - 3\thetabar(H)+\theta(H)$, and every member~$H$ of $\K'$ satisfies $\rho(H) = 3n(H) -9\thetabar(H)-5\theta(H)$. \qed
\end{proposition}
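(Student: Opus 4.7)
The plan is induction on $n(H)$, using \cref{unique 2cd} to split $H$ at a $2$-cut while preserving membership in $\L$ (respectively $\K'$).

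For the base case, $H$ is \con3, so it coincides with its unique \con3 piece, which by hypothesis belongs to $\braces \cup \K \cup \{\Theta\}$ (respectively $\K \cup \{\Theta\}$). If $H = \Theta$, then $\rho(H) = 1$, and $\b(H) = b'(H) = \thetabar(H) = 0$, $\theta(H) = 1$, so both claimed formulas evaluate to~$1$. If $H \in \braces$, then \cref{in a simple brace every pair is lm} yields $\rho(H) = \b(H)$, and since $H$ is its own only brace (of order at least six) we have $b'(H) = \thetabar(H) = 1$ and $\theta(H) = 0$, so $\b + 3b' - 3\thetabar + \theta = \b = \rho$. Finally, if $H \in \K$, then \cref{rho lower bound 3-conn - easy direction} gives $\rho(H) = \b(H) + 3b'(H) - 3 = 3n(H) - 9$, and the substitution $\thetabar(H) = 1$, $\theta(H) = 0$ verifies both identities simultaneously.

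For the inductive step, suppose $H$ has a $2$-cut $C$, with marked $C$-components $H_1$ and $H_2$. By \cref{unique 2cd}, the \con3 pieces of $H$ are precisely those of $H_1$ and $H_2$ combined; hence $H_1, H_2 \in \L$ (respectively $\K'$), and the induction hypothesis applies to each. Summing the formulas for $H_1$ and $H_2$ and invoking \cref{lm pairs across a 2-cut} for the additivity of $\rho$, \cref{bstar across even 2-cut} for that of $\b$ and $b'$, \cref{theta thetabar and n-nonbip over a 2 cut} for that of $\thetabar$ and $\theta$, together with the trivial identity $n(H) = n(H_1) + n(H_2)$, both identities propagate to $H$.

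I expect no substantive obstacle: every quantity appearing on either side of the two identities is additive across a $2$-cut, and the constants $-3$ versus $-3\thetabar + \theta$ and $-9$ versus $-9\thetabar - 5\theta$ are calibrated precisely so that $\Theta$-pieces (which contribute one to $\theta$ but zero to $\thetabar$, $\b$, $b'$) and non-$\Theta$ pieces (which contribute one to $\thetabar$ and zero to $\theta$) each match their intended per-piece value. The entire argument is really just repeated bookkeeping anchored by the three base-case computations above.
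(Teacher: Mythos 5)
Your proof is correct and matches the paper's intended argument: the paper states this proposition with only the remark that it is an easy consequence of Propositions \ref{rho lower bound 3-conn - easy direction}, \ref{lm pairs across a 2-cut}, \ref{bstar across even 2-cut} and Corollary \ref{theta thetabar and n-nonbip over a 2 cut}, and your induction on the order with the three base cases ($\Theta$, a member of $\braces$, a member of $\K$) followed by additivity across a $2$-cut is precisely how that consequence is obtained. The arithmetic in your base cases checks out, so there is nothing to add.
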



We remark that one may equivalently define either of the above families in a recursive manner.
To this end, we first define an operation.
The (\con2 cubic) graph obtained by \emph{gluing} disjoint \cc2s $G_1$ and $G_2$ at specified edges $e_1:=u_1v_1$ and $e_2:=u_2v_2$, respectively,
is the one obtained from the union of $G_1-e_1$ and $G_2-e_2$ by adding the edges $u_1u_2$ and $v_1v_2$.
Observe that, depending on the labels of the ends of $e_1$ and $e_2$, one may obtain two such graphs.
\cref{fig: k4 glue k33,fig: script l prime example} show examples.
We may now define $\K'$ as follows: firstly, $\K \cup \Theta \subseteq \K'$, and secondly, if $G_1,G_2 \in \K'$ then any graph obtained by gluing them also belongs to $\K'$.

Finally, we prove the lower bounds on $\rho$ that apply to all connected bipartite cubic graphs (mentioned in \cref{rho lower bound 2-conn - intro}), and characterize the corresponding tight examples.
It is in fact an easy consequence of \cref{rho lower bound 3-conn}, \cref{lm pairs across a 2-cut}, \cref{bstar across even 2-cut} and \cref{theta thetabar and n-nonbip over a 2 cut}; however, we choose to include its proof.

\begin{corollary}
	Every connected bipartite cubic graph $H$ satisfies:
	\[ \rho(H) ~\geqslant~ \b(H) + 3b'(H) - 3\thetabar(H)+\theta(H) ~\geqslant~ 3n(H) -9\thetabar(H)-5\theta(H)\]
	Furthermore, (i) the first inequality holds with equality if and only if $H \in \L$, and (ii)~equality holds everywhere if and only if $H \in \K'$.
\end{corollary}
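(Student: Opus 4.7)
The plan is to induct on $n(H)$, using \cref{rho lower bound 3-conn} as the base case and a $2$-cut reduction for the inductive step. The reverse implications in both (i) and (ii) have already been settled in \cref{rho lower bound 2-conn - easy direction}, so it suffices to prove the two inequalities together with the forward implications.

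For the base case, $H$ is $3$-connected. If $H = \Theta$, one checks directly that $\rho(\Theta) = 1$, $\b(\Theta) = b'(\Theta) = \thetabar(\Theta) = 0$ and $\theta(\Theta) = 1$, so both bounds hold with equality, and $\Theta \in \K' \subseteq \L$. If instead $H$ is $3$-connected with $H \neq \Theta$, then $\theta(H) = 0$ and $\thetabar(H) = 1$, so both inequalities and their equality conditions reduce verbatim to the statement of \cref{rho lower bound 3-conn}: equality in the first corresponds to $H \in \braces \cup \K$ (which in the $3$-connected case is exactly $H \in \L$), and equality everywhere to $H \in \K$ (equivalently $H \in \K'$).

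For the inductive step, I would pick any $2$-cut $C$ of $H$ and let $H_1$ and $H_2$ be its marked $C$-components; both are connected bipartite cubic graphs of strictly smaller order, so the induction hypothesis applies. The four additivity identities I would assemble are: $\rho(H) = \rho(H_1) + \rho(H_2)$ from \cref{lm pairs across a 2-cut}; $\b(H) = \b(H_1) + \b(H_2)$ and $b'(H) = b'(H_1) + b'(H_2)$ from \cref{bstar across even 2-cut}; $\theta(H) = \theta(H_1) + \theta(H_2)$ and $\thetabar(H) = \thetabar(H_1) + \thetabar(H_2)$ from \cref{theta thetabar and n-nonbip over a 2 cut}; and $n(H) = n(H_1) + n(H_2)$, which is immediate from the construction of marked $C$-components. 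Summing the two induction-hypothesis inequalities and applying these identities yields both desired inequalities for $H$.

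For the forward implications, if $\rho(H)$ equals $\b(H) + 3b'(H) - 3\thetabar(H) + \theta(H)$, additivity forces equality in the induction hypothesis for both $H_1$ and $H_2$, whence $H_1, H_2 \in \L$. By \cref{unique 2cd}, the $3$-connected pieces of $H$ are the disjoint union of those of $H_1$ and $H_2$, all of which lie in $\braces \cup \K \cup \{\Theta\}$; hence $H \in \L$. The argument for (ii) is identical after replacing $\braces \cup \K \cup \{\Theta\}$ by $\K \cup \{\Theta\}$ and $\L$ by $\K'$. The main obstacle is not technical difficulty but careful bookkeeping, in particular treating the $\Theta$ base case separately, since the invariants $\theta$ and $\thetabar$ behave differently there than in the generic $3$-connected case.
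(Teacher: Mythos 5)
Your proposal is correct and follows essentially the same route as the paper: reduce to the inequalities and the forward implications via \cref{rho lower bound 2-conn - easy direction}, handle the $3$-connected case by \cref{rho lower bound 3-conn}, and induct over a $2$-cut using the additivity of $\rho$, $\b$, $b'$, $\theta$, $\thetabar$ and $n$ from \cref{lm pairs across a 2-cut}, \cref{bstar across even 2-cut} and \cref{theta thetabar and n-nonbip over a 2 cut}. Your explicit verification of the $\Theta$ base case is a detail the paper leaves to the reader, but it is the same proof.
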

\begin{proof}
	By \cref{rho lower bound 2-conn - easy direction}, it suffices to prove that the inequalities hold, and that the forward implications of $(i)$ and $(ii)$ hold. We proceed by induction on the order. If $H$ is \mbox{$3$-connected}, by invoking \cref{rho lower bound 3-conn}, the reader may verify that the desired conclusions hold. Now, suppose that $H$ is not \mbox{$3$-connected}; let $C$ denote a $2$-cut, and let $H_1$~and~$H_2$ denote the marked \mbox{$C$-components}. Clearly, $H_1$ and $H_2$ are connected and cubic; by~\cref{cc2 is bipartite iff each marked C-component is bipartite}, they are bipartite as well. By the induction hypothesis, for each $i \in \{1,2\}$:
	\begin{equation}\label{P(H_i) >= beta prime bound for i = 1 and 2}
		\rho(H_i) ~\geqslant~ \b(H_i) + 3b'(H_i) - 3\thetabar(H_i)+\theta(H_i) ~\geqslant~ 3n(H_i) -9\thetabar(H_i)-5\theta(H_i)
	\end{equation}
	By \cref{lm pairs across a 2-cut}:
	\begin{equation}\label{rho(H) is the sum of that of H_1 and H_2}
		\rho(H)=\rho(H_1)+\rho(H_2)
	\end{equation}
	By \cref{bstar across even 2-cut} and \cref{theta thetabar and n-nonbip over a 2 cut}:
	\begin{equation}\label{beta prime bound of H is the sum of that of H_1 and H_2}
		\begin{split}
			\b(H) + 3b'(H) - 3\thetabar(H)+\theta(H) ~=~
			\begin{array}{ c @{} c @{} c @{} c @{} c @{} c @{} c @{} c @{} c @{} c @{} c @{} c @{} c @{} c @{} c @{} c @{} c}
				&& \b(H_1) + 3b'(H_1) - 3\thetabar(H_1)+\theta(H_1) \\
				&+~& \b(H_2) + 3b'(H_2) - 3\thetabar(H_2)+\theta(H_2)  \\
			\end{array}
		\end{split}
	\end{equation}
	As $n(H)=n(H_1)+n(H_2)$, by \cref{theta thetabar and n-nonbip over a 2 cut}:
	\begin{equation}\label{linear bound of H is the sum of that of H_1 and H_2}
		\begin{split}
			n(H) - 9\thetabar(H)-5\theta(H) ~=~
			\begin{array}{ c @{} c @{} c @{} c @{} c @{} c @{} c @{} c @{} c @{} c @{} c @{} c @{} c @{} c @{} c @{} c @{} c}
				&& n(H_1) - 9\thetabar(H_1)-5\theta(H_1) \\
				&+~& n(H_2) - 9\thetabar(H_2)-5\theta(H_2)  \\
			\end{array}
		\end{split}
	\end{equation}
	By combining \cref{P(H_i) >= beta prime bound for i = 1 and 2,rho(H) is the sum of that of H_1 and H_2,beta prime bound of H is the sum of that of H_1 and H_2,linear bound of H is the sum of that of H_1 and H_2} above, we may conclude that:
	\[ \rho(H) ~\geqslant~ \b(H) + 3b'(H) - 3\thetabar(H)+\theta(H) ~\geqslant~ 3n(H) -9\thetabar(H)-5\theta(H)\]
	Now, if $\rho(H) = \b(H) + 3b'(H) - 3\thetabar(H)+\theta(H)$ then $\rho(H_i)=\b(H_i) + 3b'(H_i) - 3\thetabar(H_i)+\theta(H_i)$ for each $i \in \{1,2\}$; hence, by the induction hypothesis, $H_1, H_2 \in \L$; by definition, $H \in \L$.
	Likewise, if $\rho(H) = 3n(H) -9\thetabar(H)-5\theta(H)$ then $\rho(H_i)=3n(H_i) -9\thetabar(H_i)-5\theta(H_i)$ for~each $i \in \{1,2\}$; hence, by the induction hypothesis, $H_1, H_2 \in \K'$; by definition, $H \in \K'$.
\end{proof}

\section{\lm\ vertices in cubic graphs}\label{sec: lm vertices in cubic graphs}
In order to establish the bound $\lambda \geqslant \B$, we first deal with \cc3s as in the preceding section.
To this end, we use nontrivial barriers as our induction tool;
see \cref{a 2 conn cubic graph is not theta or brick iff there is a nontrivial barrier}.
We shall first look at some matching-theoretic concepts related to nontrivial barriers that apply to all matching covered graphs and play an important role in our work.
Observe that if $B$ is any barrier of a \mcg\ $G$ and if $J$ is any (odd) component of $G-B$, then $C:=\partial(V(J))$ is a tight cut; such cuts are called \emph{barrier cuts}, and the \mbox{$C$-contraction}
$G/\overline{V(J)}$ is called a \emph{barrier fragment} of $G$ with respect to $B$, or simply a \emph{$B$-fragment} of~$G$.
Clearly, the number of $B$-fragments is $|B|$.
Additionally, the bipartite (\mc) graph $H[A,B]$ obtained from~$G$ by shrinking each (odd) component of $G-B$ into a single vertex is called the \emph{core} of $G$ with respect to $B$. 
Since each vertex $a \in A$ is obtained by shrinking some odd component $J$, which in turn corresponds to a \mbox{$B$-fragment} $G/\overline{V(J)}$, there is a natural correspondence between the vertices in $A$ and the $B$-fragments, that we shall find useful. Below is an example wherein the correspondence is shown using the same subscripts.

\fig{\mbox{A barrier $B$ of a \cc3 $G$, the corresponding core $H$, and the $B$-fragments $G_i$'s}}{fig: barriers and fragments example}{\begin{tikzpicture}[xscale=1.3,yscale=1.4]


\tikzmath{
	\height = 0.7;
	\halfsidelength = sin(30)*\height;
	\sidelength = 2*\halfsidelength;
}

\node[vtx] (b1) at (1,0) {};
\node[vtx] (b2) at (2.1,0) {};
\node[vtx] (b3) at (3.8,0) {};
\node[vtx] (b4) at (5.4,0) {};
\node[vtx] (b5) at (7.15,0) {};

\node[anchor=east] at ($(b1)+(-0.5,0)$) {$G\!:$};
\draw[magenta,thick] ($(b1)+(-0.25,0.25)$) rectangle ($(b5)+(0.25,-0.25)$);
\node[magenta] at ($(b5)+(0.45,0)$) {$B$};

\coordinate (a1) at ($(b1)+(0,-1.65)$);
\coordinate (a2) at ($(b2)+(0,-1.5)$);
\coordinate (a3) at ($(b3)+(0,-1.3)$);
\coordinate (a4) at ($(b4)+(0,-1.65)$);
\coordinate (a5) at ($(b5)+(0,-1.15)$);

\node[vtx] (p1) at (a1) {};

\node[vtx] (q2) at ($(a2)+(0,\height/3)$) {};
\node[vtx] (q1) at ($(a2)+(-\halfsidelength, -2*\height/3)$) {};
\node[vtx] (q3) at ($(a2)+(+\halfsidelength, -2*\height/3)$) {};

\tikzmath{ \horizgap = 0.75; \vertgap = 0.75; }
\node[vtx] (r2) at (a3) {};
\node[vtx] (r1) at ($(a3)+(-\horizgap,0)$) {};
\node[vtx] (r3) at ($(a3)+( \horizgap,0)$) {};
\node[vtx] (r4) at ($(r1)!0.5!(r2)+(0,-\vertgap)$) {};
\node[vtx] (r5) at ($(r2)!0.5!(r3)+(0,-\vertgap)$) {};

\node[vtx] (s1) at (a4) {};

\tikzmath{ \horizgap = 0.75; \height = 0.7; }
\node[vtx] (t2) at (a5) {};
\node[vtx] (t1) at ($(a5)+(-\horizgap,0)$) {};
\node[vtx] (t3) at ($(a5)+( \horizgap,0)$) {};
\coordinate (x) at ($(t1)+(\halfsidelength,-0.8)$) {};
\node[vtx] (t5) at ($(x)+(0,\height/3)$) {};
\node[vtx] (t4) at ($(x)+(-\halfsidelength, -2*\height/3)$) {};
\node[vtx] (t6) at ($(x)+(+\halfsidelength, -2*\height/3)$) {};
\node[vtx] (t7) at ($(t3)+(0,-1.05)$) {};

\draw (p1) -- (b1);
\draw (p1) -- (b2);
\draw plot [smooth,tension=1] coordinates {(p1) ($(b2)!0.6!(a2)$) (b3)};

\draw (q1) -- (b1);
\draw (q2) -- (b2);
\draw plot [smooth,tension=1] coordinates {(q3) ($(b3)!0.4!(r1)+(0.05,-0.1)$) (b4)};
\draw (q1) -- (q2) -- (q3) -- (q1);

\draw plot [smooth,tension=1] coordinates {(r1) ($(b2)!0.6!(a2)$) (b1)};
\draw (r2) -- (b2);
\draw plot [smooth,tension=1] coordinates {(r3) ($(b4)!0.5!(a4)$) (b5)};
\draw (r4) -- (r1);
\draw (r4) -- (r2);
\draw (r4) -- (r3);
\draw (r5) -- (r1);
\draw (r5) -- (r2);
\draw (r5) -- (r3);

\draw (s1) -- (b3);
\draw (s1) -- (b4);
\draw (s1) -- (b5);

\draw plot [smooth,tension=1] coordinates {(t1) ($(b4)!0.5!(a4)$) (b3)};
\draw (t2) -- (b4);
\draw (t3) -- (b5);
\draw (t4) -- (t5) -- (t6) -- (t4);
\draw (t1) -- (t4);
\draw (t2) -- (t5);
\draw (t3) -- (t6);
\draw (t1) -- (t7);
\draw (t2) -- (t7);
\draw (t3) -- (t7);

\begin{scope}[shift={(0,-3.1)}]
	\node[vtx] (b1) at (1,0) {};
	\node[vtx] (b2) at (2.1,0) {};
	\node[vtx] (b3) at (3.8,0) {};
	\node[vtx] (b4) at (5.4,0) {};
	\node[vtx] (b5) at (7.15,0) {};

	\node[vtx-white] (a1) at ($(b1)+(0,-1.3)$) {};
	\node[vtx-white] (a2) at ($(b2)+(0,-1.3)$) {};
	\node[vtx-white] (a3) at ($(b3)+(0,-1.3)$) {};
	\node[vtx-white] (a4) at ($(b4)+(0,-1.3)$) {};
	\node[vtx-white] (a5) at ($(b5)+(0,-1.3)$) {};

	\node[anchor=east] at ($(b1)+(-0.5,0)$) {$H[A,B]\!:$};
	\draw[magenta,thick] ($(b1)+(-0.25,0.25)$) rectangle ($(b5)+(0.25,-0.25)$);
	\node[magenta] at ($(b5)+(0.45,0)$) {$B$};
	\draw[cyan,thick] ($(a1)+(-0.25,0.25)$) rectangle ($(a5)+(0.25,-0.45)$);
	\node[cyan] at ($(a5)+(0.45,-0.125)$) {$A$};
	\foreach \i in {1,...,5}{
		\node at ($(a\i)+(0,-0.25)$) {$a_\i$};
	}

	\draw (a1) -- (b1);
	\draw (a1) -- (b2);
	\draw (a1) -- (b3);

	\draw (a2) -- (b1);
	\draw (a2) -- (b2);
	\draw (a2) -- (b4);

	\draw (a3) -- (b1);
	\draw (a3) -- (b2);
	\draw plot [smooth,tension=1] coordinates {(a3) ($(b4)!0.5!(a4)$) (b5)};

	\draw (a4) -- (b3);
	\draw (a4) -- (b4);
	\draw (a4) -- (b5);

	\draw plot [smooth,tension=1] coordinates {(a5) ($(b4)!0.5!(a4)$) (b3)};
	\draw (a5) -- (b4);
	\draw (a5) -- (b5);

	\node[vtx-white] (a3) at ($(b3)+(0,-1.3)$) {};
	\node[vtx-white] (a5) at ($(b5)+(0,-1.3)$) {};
\end{scope}

\begin{scope}[shift={(0,-5.5)}]
	\node[vtx] (b1) at (1,0.2) {};
	\node[vtx] (b2) at (2.1,0.2) {};
	\node[vtx] (b3) at (3.8,0.2) {};
	\node[vtx] (b4) at (5.4,0.2) {};
	\node[vtx] (b5) at (7.15,0.2) {};

	\coordinate (a1) at ($(b1)+(0,-1)$);
	\coordinate (a2) at ($(b2)+(0,-1)$);
	\coordinate (a3) at ($(b3)+(0,-1)$);
	\coordinate (a4) at ($(b4)+(0,-1)$);
	\coordinate (a5) at ($(b5)+(0,-1)$);

	\node[vtx] (p1) at (a1) {};

	\node[vtx] (q2) at ($(a2)+(0,\height/3)$) {};
	\node[vtx] (q1) at ($(a2)+(-\halfsidelength, -2*\height/3)$) {};
	\node[vtx] (q3) at ($(a2)+(+\halfsidelength, -2*\height/3)$) {};

	\tikzmath{ \horizgap = 0.75; \vertgap = 0.75; }
	\node[vtx] (r2) at (a3) {};
	\node[vtx] (r1) at ($(a3)+(-\horizgap,0)$) {};
	\node[vtx] (r3) at ($(a3)+( \horizgap,0)$) {};
	\node[vtx] (r4) at ($(r1)!0.5!(r2)+(0,-\vertgap)$) {};
	\node[vtx] (r5) at ($(r2)!0.5!(r3)+(0,-\vertgap)$) {};

	\node[vtx] (s1) at (a4) {};

	\tikzmath{ \horizgap = 0.75; \height = 0.7; }
	\node[vtx] (t2) at (a5) {};
	\node[vtx] (t1) at ($(a5)+(-\horizgap,0)$) {};
	\node[vtx] (t3) at ($(a5)+( \horizgap,0)$) {};
	\coordinate (x) at ($(t1)+(\halfsidelength,-0.8)$) {};
	\node[vtx] (t5) at ($(x)+(0,\height/3)$) {};
	\node[vtx] (t4) at ($(x)+(-\halfsidelength, -2*\height/3)$) {};
	\node[vtx] (t6) at ($(x)+(+\halfsidelength, -2*\height/3)$) {};
	\node[vtx] (t7) at ($(t3)+(0,-1.05)$) {};

	\draw (p1) -- (b1);
	\draw plot [smooth,tension=1.125] coordinates {(p1) ($(p1)!0.5!(b1)+(0.2,0)$) (b1)};
	\draw plot [smooth,tension=1.125] coordinates {(p1) ($(p1)!0.5!(b1)-(0.2,0)$) (b1)};

	\draw (q2) -- (b2);
	\draw (q1) to[in=180+50,out=90+12] (b2);
	\draw (q3) to[in=-50,out=90-12] (b2);
	\draw (q1) -- (q2) -- (q3) -- (q1);

	\draw (r1) -- (b3);
	\draw (r2) -- (b3);
	\draw (r3) -- (b3);
	\draw (r4) -- (r1);
	\draw (r4) -- (r2);
	\draw (r4) -- (r3);
	\draw (r5) -- (r1);
	\draw (r5) -- (r2);
	\draw (r5) -- (r3);

	\draw (s1) -- (b4);
	\draw plot [smooth,tension=1.125] coordinates {(s1) ($(s1)!0.5!(b4)+(0.2,0)$) (b4)};
	\draw plot [smooth,tension=1.125] coordinates {(s1) ($(s1)!0.5!(b4)-(0.2,0)$) (b4)};

	\draw (t1) -- (b5);
	\draw (t2) -- (b5);
	\draw (t3) -- (b5);
	\draw (t4) -- (t5) -- (t6) -- (t4);
	\draw (t1) -- (t4);
	\draw (t2) -- (t5);
	\draw (t3) -- (t6);
	\draw (t1) -- (t7);
	\draw (t2) -- (t7);
	\draw (t3) -- (t7);
\end{scope}

\node at ($(a1)+(0,-0.35)$) {$G_1$};
\node at ($(a2)+(0,-0.8)$) {$G_2$};
\node at ($(a3)+(0,-1.1)$) {$G_3$};
\node at ($(a4)+(0,-0.35)$) {$G_4$};
\node at ($(a5)+(0,-1.6)$) {$G_5$};

\node at (8.65,0) {};

\end{tikzpicture}}

The following is easy to see.

\begin{lemma}\label{B* over a barrier}
	Let $B$ denote a nontrivial barrier of a \mcg\ $G$, let $H$ denote the corresponding core, and let $G_1,G_2,\dots,G_{r}$ denote the $B$-fragments of order four or more. Then:
	\begin{eqnqed}
		\bstar(G)~=~\bstar(H) ~\cupdot~ \left(\, \bigcupdot_{i=1}^{r}\bstar(G_i) \right)
	\end{eqnqed}
\end{lemma}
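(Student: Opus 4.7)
The plan is to exhibit a particular sequence of nontrivial tight cuts whose first phase peels off the fragments $G_1,\dots,G_r$ and leaves the core $H$, and then to invoke the uniqueness \cref{tcd uniqueness} to conclude.

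First, I would observe that each barrier cut $\partial(V(J))$, where $J$ is a component of $G-B$, is a tight cut of $G$ (as noted just before the statement of the lemma), and that it is nontrivial iff both shores have at least two vertices; since $B$ is nontrivial, $|\overline{V(J)}| \geqslant |B| \geqslant 2$ holds automatically, so nontriviality reduces to $|V(J)| \geqslant 2$. As $J$ is odd, this is equivalent to $|V(J)| \geqslant 3$, that is, to $G/\overline{V(J)}$ having order at least four. Hence the nontrivial barrier cuts correspond precisely to the fragments $G_1,\dots,G_r$.

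Next, I would peel them off one at a time. Starting from $G$, apply $\partial(V(J_1))$ to obtain $G_1 = G/\overline{V(J_1)}$ on one side and $G'_1 := G/V(J_1) \rightarrow j_1$ on the other. Since every edge of $G$ leaving $V(J_1)$ terminates in $B$, the contraction vertex $j_1$ is a singleton component of $G'_1 - B$, while every other component $J_k$ of $G - B$ persists as a component of $G'_1 - B$; in particular $B$ remains a nontrivial barrier of $G'_1$, and $\partial_{G'_1}(V(J_k))$ is again a barrier cut of $G'_1$ that is nontrivial whenever $|V(J_k)| \geqslant 3$. Iterating this for $i = 2,\dots,r$ peels off $G_i$ and leaves a graph in which every component of $G - B$ of order at least three has been shrunk to a single vertex while every singleton component is preserved; by definition, this remaining graph is the core $H$.

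Finally, continue applying the tight cut decomposition procedure independently on each fragment $G_i$ and on the core $H$. By \cref{tcd uniqueness}, the resulting multiset of bricks and braces is $\bstar(G)$ regardless of the order of cuts chosen, so
\[
\bstar(G) ~=~ \bstar(H) ~\cupdot~ \bigcupdot_{i=1}^{r}\bstar(G_i),
\]
as desired. The only point that requires genuine care, and which I expect to be the main (though still routine) obstacle, is verifying that the successive barrier cuts remain tight and nontrivial after each contraction; but this follows immediately from the observation above that edges leaving $V(J_i)$ in $G$ must terminate in $B$, so distinct shrunken components never interact.
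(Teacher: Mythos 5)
Your proof is correct, and since the paper offers no proof of this lemma (it is introduced only with ``The following is easy to see''), your argument is precisely the intended one: peel off the nontrivial barrier cuts one at a time, check that $B$ remains a barrier after each contraction so the remaining barrier cuts stay tight and the residue is the core $H$, and then invoke \cref{tcd uniqueness}. The one point worth making explicit---which you do handle---is that tightness of $\partial_{G'_1}(V(J_k))$ in the contracted graph is cleanest to see by verifying that $B$ is still a barrier of the (matching covered) contraction, rather than by appealing to a general persistence property of tight cuts.
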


The following observation is an immediate consequence of the above.

\begin{corollary}\label{beta over a barrier}
	Let $B$ denote a barrier of a \mcg\ $G$, let $H$ denote the corresponding core, and let $G_1,G_2,\dots,G_{|B|}$ denote the $B$-fragments. Then:
	\begin{eqnqed}
		\B(G)~=~\sum_{i=1}^{|B|}\B(G_i)
	\end{eqnqed}
\end{corollary}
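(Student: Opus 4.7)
The plan is to derive this identity as a bookkeeping consequence of the preceding Lemma on $\bstar$ over a barrier, after verifying that two of the pieces appearing in that decomposition contribute nothing to $\B$.

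First, I would observe that the core $H[A,B]$ is bipartite by construction: the set $B$ is one color class, and the set $A$ (one vertex per odd component of $G-B$) is the other. Invoking the proposition that a matching covered graph is bipartite if and only if $b=0$, I conclude that $\bstar(H)$ consists entirely of braces, so $\B(H)=0$. Second, I would handle those $B$-fragments that the preceding lemma omits, namely the ones of order less than four. Each such fragment arises from an odd component $J$ of $G-B$ with $|V(J)|=1$, so the fragment is a (possibly multi-edge) graph on two vertices, which is bipartite; by the same proposition, it also satisfies $\B=0$.

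With these two observations in hand, the identity follows immediately. Indexing the $B$-fragments of order at least four as $G_1,\dots,G_r$ and the remaining order-two fragments as $G_{r+1},\dots,G_{|B|}$, and summing the orders of bricks across the multiset equation supplied by the preceding lemma, I get
\[
\B(G) \,=\, \B(H) \,+\, \sum_{i=1}^{r}\B(G_i) \,=\, 0 \,+\, \sum_{i=1}^{r}\B(G_i) \,+\, \sum_{j=r+1}^{|B|} 0 \,=\, \sum_{i=1}^{|B|}\B(G_i).
\]
The only case not directly subsumed by the preceding lemma is that of a trivial (singleton) barrier $B=\{v\}$, but here $\overline{V(J)}=\{v\}$ contracts to itself, so the unique $B$-fragment is $G$ itself and the identity is tautological.

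I do not anticipate a serious obstacle: the corollary is essentially a relabeling of the preceding lemma, and the only real content lies in the two short observations above, which certify that the core and the order-two fragments contribute no bricks.
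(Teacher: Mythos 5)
Your proof is correct and matches the paper's intent exactly: the paper offers no written argument, declaring the identity an immediate consequence of \cref{B* over a barrier}, and your writeup supplies precisely the bookkeeping that claim presupposes (the bipartite core and the order-two fragments contribute no bricks, and the singleton-barrier case is tautological). No gaps.
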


Using a straightforward counting argument, we observe that in a \cc2, every barrier cut is a $3$-cut --- a special case of \cref{every tight cut of a cc2 is a 3-cut}.
This fact and \cref{3-cut contractions preserve connectivity} imply the following.

\begin{lemma}\label{core and fragments of a 3-conn cubic graph are 3-conn and cubic}
	The core as well as barrier fragments of a \cc3, with respect to any barrier, are also cubic and \mbox{$3$-connected}. \qed
\end{lemma}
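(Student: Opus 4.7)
The plan is to leverage \cref{3-cut contractions preserve connectivity} together with the observation (noted just before the lemma) that every barrier cut in a \cc2\ is a $3$-cut; once this is in hand, both statements reduce to straightforward applications of cut-contraction.

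For the barrier fragments, I would proceed directly. Given any odd component $J$ of $G-B$, the corresponding barrier cut $C_J := \partial_G(V(J))$ is a $3$-cut, so the fragment $G_J := G/\overline{V(J)} \rightarrow \overline{v}$ is a single $3$-cut contraction of $G$. The contraction vertex has degree $|C_J|=3$, while every other vertex retains its degree of three, so $G_J$ is cubic. \cref{3-cut contractions preserve connectivity} applied to $G$ and the $3$-cut $C_J$ then yields $\kappa(G_J) \geqslant \kappa(G) = 3$, completing this part.

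For the core $H[A,B]$, I would first note that since $G$ is matching covered, the barrier $B$ is stable by \cref{when is G-u-v matchable}; hence $H$ is indeed bipartite with color classes $A$ (the vertices arising from the contracted odd components $J_1,\dots,J_{|B|}$) and $B$. Next, I would realize $H$ as an iterated $3$-cut contraction: because the components of $G-B$ are pairwise disjoint, each set $V(J_i)$ remains a shore of a $3$-cut after previous contractions of $V(J_j)$ for $j \neq i$. Thus, by induction on $i$, the intermediate graph $G/V(J_1)/\cdots/V(J_i)$ is obtained from a \cc3\ cubic graph by contracting the shore of a $3$-cut; by \cref{3-cut contractions preserve connectivity} it remains \con3, and each freshly created contraction vertex has degree $3$, so cubicness is maintained. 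After $|B|$ steps, the resulting graph is $H$, which is therefore cubic and \con3.

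The argument has essentially no obstacle: the only point requiring a little care is verifying at each stage of the iterated contraction for the core that the cut being contracted is still a $3$-cut (guaranteed by disjointness of the $V(J_i)$'s) and that cubicness is preserved (guaranteed by the $3$-cut having exactly three edges). Beyond this bookkeeping, everything follows from the propositions already established.
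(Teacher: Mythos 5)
Your proposal is correct and follows essentially the same route as the paper, which justifies the lemma by noting that every barrier cut in a \cc2\ is a $3$-cut and then invoking \cref{3-cut contractions preserve connectivity}; you merely spell out the (routine) detail that the core arises as an iterated sequence of such $3$-cut contractions.
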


We now switch our attention back to \lmity.
Each of the next three results is interesting only when the barrier $B$ is nontrivial; however, they hold even when $B$ is singleton.
The following lemma considers any vertex that does not lie in a particular barrier $B$, and relates its \lmity\ in $G$ with its \lmity\ in the corresponding $B$-fragment.

\begin{lemma}\label{outside barrier lemma}
	Let $B$ denote a nonempty barrier of a \cc3\ $G$, and let $J$ be an odd component of $G-B$. Then, a vertex $v \in V(J)$ is \lm\ in $G$ if and only if $v$ is \lm\ in the corresponding $B$-fragment $G/\overline{V(J)}$.
\end{lemma}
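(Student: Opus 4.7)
The plan is to prove the two directions separately. For the ``if'' direction, since $B$ is a barrier of $G$, the cut $\partial(V(J))$ is a barrier cut and hence tight (and so separating); because $v \in V(J)$ is distinct from the contraction vertex of $G/\overline{V(J)}$, the conclusion follows immediately from \cref{lm vertex in a separating cut-contraction is lm in the bigger graph}.

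For the ``only if'' direction, I would argue by contraposition via \cref{characterization of non lm vertices}. Set $X := V(J)$ and $G_1 := G/\Xbar \to \xbar$. Suppose $v$ is not \lm\ in $G_1$; then some barrier $B_1$ of $G_1$ isolates $v$, and the goal is to manufacture a barrier $B^*$ of $G$ that also isolates $v$, split into two cases by whether $\xbar \in B_1$. In either case, the edges from $X$ to $\Xbar$ all land in $B$ (since $J$ is a component of $G-B$), which lets one translate between neighborhoods in $G$ and in $G_1$.

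If $\xbar \in B_1$, take $B^* := (B_1 - \xbar) \cup B$. All $G$-neighbors of $v$ inside $X$ lie in $B_1 - \xbar$ (from the isolation in $G_1 - B_1$), and those outside $X$ lie in $B$, so $v$ is isolated in $G - B^*$. Letting $J_1, \dots, J_{|B|-1}$ denote the other odd components of $G - B$, the graph $G - B^*$ decomposes as $(J - (B_1 - \xbar))~\cupdot~J_1~\cupdot~\cdots~\cupdot~J_{|B|-1}$; the first piece is literally $G_1 - B_1$, so $\codd(G - B^*) = |B_1| + (|B|-1) = |B^*|$, confirming that $B^*$ is a barrier.

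If $\xbar \notin B_1$, take $B^* := B_1 \subseteq X$. Because $\xbar$ is then a non-neighbor of $v$ in $G_1 - B_1$, the vertex $v$ has no $G$-neighbors in $B$, so all of its $G$-neighbors lie in $B_1$ and $v$ is isolated in $G - B_1$. The main subtlety is verifying $\codd(G - B_1) = |B_1|$: the components of $G_1 - B_1$ that avoid $\xbar$ persist unchanged in $G - B_1$ (no edges from them cross into $\Xbar$, by the component argument in $G_1$), while the component $C$ containing $\xbar$ ``expands'' into a single piece on $(C - \xbar) \cup \Xbar$ of odd order (since $|\Xbar|$ is odd, as $n(G)$ is even and $|X|$ is odd). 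The key input here — that $G[\Xbar]$ is connected — is the principal technical obstacle; I would handle it by combining \cref{for cubic vtx-conn = edge-conn} with the observation that if $G[\Xbar]$ split into pieces $\Xbar_1, \Xbar_2$, then $\partial(\Xbar_1)$ would be a nonempty proper sub-cut of $\partial(X)$, giving an edge-cut of size strictly less than three and contradicting \mbox{$3$-edge-connectivity}. In both cases \cref{characterization of non lm vertices} now witnesses that $v$ is not \lm\ in $G$, completing the contrapositive.
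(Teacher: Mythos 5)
Your proof is correct, but your ``only if'' direction takes a genuinely different route from the paper's. The paper argues directly: given a $v$-matching $M$ of $G$, it applies \paritylemma\ to each of the $|B|$ odd components of $G-B$ to get $\sum_{L}|\partial_M(V(L))| = \sum_{u \in B} d_M(u) = |B|$, forcing $|\partial_M(V(J))|=1$, so that $M \cap E(G/\overline{V(J)})$ is already the desired $v$-matching of the fragment --- three lines, no case analysis. You instead argue the contrapositive via \cref{characterization of non lm vertices}, lifting a barrier $B_1$ of $G_1$ that isolates $v$ to a barrier of $G$ that isolates $v$, splitting on whether $\xbar \in B_1$. Your counting of odd components is right in both cases, and your one genuine technical obligation --- that $G[\Xbar]$ is connected in the case $\xbar \notin B_1$ --- is discharged correctly, though you should say explicitly that $\partial(X)$ is a $3$-cut (it is a barrier cut, hence tight, hence a $3$-cut by \cref{every tight cut of a cc2 is a 3-cut}) before concluding that a proper nonempty sub-cut of it has size less than three. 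What your approach buys is an explicit, structural description of how non-\lmity\ propagates from a fragment to the whole graph, which is of some independent interest; what it costs is length, a case split, and a genuine reliance on $3$-connectivity for the connectivity of $G[\Xbar]$, whereas the paper's matching-restriction argument is shorter, needs no connectivity beyond what makes the objects well-defined, and is the template reused immediately afterwards in \cref{inside barrier lemma}.
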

\begin{proof}
	Since $\partial_G(V(J))$ is a tight cut, the reverse implication follows from \cref{lm vertex in a separating cut-contraction is lm in the bigger graph}. Now suppose that $v \in V(J)$ is \lm\ in $G$, and let $M$ denote a $v$-matching.
	By \paritylemma, for each (odd) component $L$ of $G-B$, the cut $\partial_M(V(L))$ is odd. Note that:
	\[\sum_{L\,\in\,\mathcal{C}(G-B)} |\partial_M(V(L))| = \sum_{u\,\in\,B} d_M(u) = |B|\]
	Therefore, $|\partial_M(V(J))|=1$; whence, $M \cap E(G/\overline{V(J)})$ is a $v$-matching of $G/\overline{V(J)}$.
\end{proof}

The next lemma deals with vertices that belong to a particular barrier $B$, and characterizes their \lmity\ in terms of related properties of the core and of the $B$-fragments.

\begin{lemma}\label{inside barrier lemma}
	Let $B$ denote a nonempty barrier of a \cc3 $G$, and let $H[A,B]$ denote the core. Then, a vertex $b \in B$ is \lm\ in $G$ if and only if there is some vertex $a \in A$ such that $(i)$ the pair $(a,b)$ is \lm\ in~$H$ and $(ii)$ in the $B$-fragment corresponding to $a$, the contraction vertex is \lm.
\end{lemma}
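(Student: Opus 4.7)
The plan is to translate between a $b$-matching of $G$ and a pair consisting of an $(a,b)$-matching of the core $H$ together with an $x$-matching of a single $B$-fragment. \paritylemma\ drives the counting in the forward direction, while \cref{every tight cut of a cc2 is a 3-cut} and \cref{separating cuts characterization} handle the gluing in the backward direction.

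For the forward direction, given a $b$-matching $M$ of $G$, the key observation is that among the $|B|$ odd components of $G-B$, exactly one is ``special''. Indeed, for each such component $L$, \paritylemma\ applied to $V(L)$ forces $|\partial_M(V(L))|$ to be odd, while counting $M$-degrees at $B$ --- whose vertices all have their three $G$-neighbors outside $B$, since $B$ is stable --- gives
\[ \sum_{L} |\partial_M(V(L))| ~=~ d_M(b) + \sum_{u \in B - b} d_M(u) ~=~ |B| + 2. \]
Hence exactly one component $L^*$ satisfies $|\partial_M(V(L^*))| = 3$ and each of the rest satisfies $|\partial_M(V(L))| = 1$. Letting $a \in A$ correspond to $L^*$, the image in $E(H)$ of $M \cap \partial_G(B)$ is an $(a,b)$-matching of $H$, and the image in $E(G/\overline{V(L^*)})$ of $M \cap \big(E(G[V(L^*)]) \cup \partial_G(V(L^*))\big)$ is an $x$-matching of the $B$-fragment $G/\overline{V(L^*)} \rightarrow x$.

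Conversely, let $a \in A$ satisfy $(i)$ and $(ii)$, with $M_H$ an $(a,b)$-matching of $H$ and $M_a$ an $x$-matching of the corresponding $B$-fragment $G_a$. By \cref{every tight cut of a cc2 is a 3-cut}, the barrier cut at $a$ has size exactly three, so the three $M_H$-edges at $a$ and the three $M_a$-edges at $x$ correspond to the same three $G$-edges, giving rigid consistency on the $a$-side. For each $a' \in A$ with $a' \neq a$, the unique $M_H$-edge at $a'$ picks out a specific edge of the matching covered $B$-fragment $G_{a'}$, and \cref{separating cuts characterization} supplies a perfect matching $M_{a'}$ of $G_{a'}$ containing that edge as its sole edge across the cut at the contraction vertex of $G_{a'}$. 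Viewing $M_a$ and each $M_{a'}$ as subsets of $E(G)$, their union has degree three at $b$ and degree one at every other vertex (by a degree count mirroring the forward direction), hence is the desired $b$-matching.

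The main obstacle is verifying the boundary bookkeeping across all fragments at once. The \con3 hypothesis is essential here: it forces every barrier cut to be a precise $3$-cut, so that the three edges at $a$ in $M_H$ exactly match the three edges at $x$ in $M_a$, while each other fragment needs only a single prescribed edge extended to a perfect matching --- exactly what \cref{separating cuts characterization} provides.
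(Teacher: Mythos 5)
Your proof is correct and follows essentially the same route as the paper's: the forward direction is identical (the Parity Lemma on each odd component of $G-B$ plus the degree count $\sum_L |\partial_M(V(L))| = |B|+2$, which isolates exactly one fragment meeting $M$ in three edges), and the converse glues an $(a,b)$-matching of the core with a $\lambda$-matching of the fragment at $a$ centered at its contraction vertex. If anything, your converse is more complete than the paper's, which simply asserts that $M_1 \cup M_2$ is the desired $b$-matching and leaves implicit the perfect matchings needed to cover the interiors of the remaining $B$-fragments --- the step you carry out explicitly via \cref{separating cuts characterization} (plain matching coveredness of each fragment already suffices here, since any perfect matching of a fragment meets the trivial cut at its contraction vertex exactly once).
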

\begin{proof}
	First suppose that $b$ is \lm\ in $G$, and let $M$ denote a $b$-matching of $G$.
	Note that for any (odd) component $L$ of $G-B$, by \paritylemma, $\partial_M(V(L))$ is odd.
	Also, note that:
	\[ \sum_{L\,\in\,\mathcal{C}(G-B)} |\partial_M(V(L))| = \sum_{u \in B} d_M(u) = |B|+2\]
	Consequently, for precisely one of the members of $\mathcal{C}(G-B)$, say $J$, it holds that $|\partial_M(V(J))|=3$, whereas $|\partial_M(V(L))|=1$ for every other member $L$.
	The sets $M \cap E(G/\overline{V(J)})$ and $M \cap E(H)$ demonstrate that the desired conclusion holds.

	Conversely, suppose that there exists $a \in A$ such that $(i)$ and $(ii)$ hold. Let $M_1$ denote an $(a,b)$-matching of~$H$, and let $M_2$ denote a $\lambda$-matching, of the $B$-fragment corresponding to $a$, centered at its contraction vertex. Observe that $M_1 \cup M_2$ is the desired $b$-matching of $G$.
\end{proof}

By invoking
\cref{outside barrier lemma,inside barrier lemma}, the reader may verify the following.

\begin{theorem}\label{Lambda in terms of Lambda and Rho of barriers and fragments}
	Let $B$ denote a nonempty barrier of a \cc3 $G$, let $H[A,B]$ denote the corresponding core, and let $G_1,G_2,\dots,G_{|B|}$ denote the $B$-fragments with contraction vertices $v_1,v_2,\dots,v_{|B|}$, respectively. Let $A' \subseteq A$ be defined as follows: $a \in A'$ if the contraction vertex of the corresponding $B$-fragment is \lm. Then:
	\[\ds\Lambda(G) = \bigcupdot_{i=1}^{|B|} \big(\Lambda(G_i) - v_i\big) \cupdot B'\]
	where $B' \subseteq B$ is defined as follows: $b \in B'$ if there is some vertex $a \in A'$ such that $(a,b)$ is a \lm\ pair in $H$. \qed
\end{theorem}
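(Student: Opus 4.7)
The plan is to combine \cref{outside barrier lemma,inside barrier lemma} directly, partitioning $V(G)$ according to the barrier $B$. Since $G-B$ has components whose vertex sets are in natural bijection with the $B$-fragments, we may write $V(G) = B ~\cupdot~ V(J_1) ~\cupdot~ \cdots ~\cupdot~ V(J_{|B|})$, where $J_i$ is the component corresponding to the $B$-fragment $G_i$ (so that $V(G_i) - v_i = V(J_i)$). The claim then follows by computing $\Lambda(G) \cap V(J_i)$ for each $i$ and $\Lambda(G) \cap B$ separately.

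First I would handle the vertices outside $B$. By \cref{outside barrier lemma}, for each $i \in \{1, \dots, |B|\}$ and each $v \in V(J_i)$, the vertex $v$ is \lm\ in $G$ if and only if $v$ is \lm\ in $G_i$. Since $v \neq v_i$ (as $v_i$ is the contraction vertex, not a vertex of $J_i$), this yields $\Lambda(G) \cap V(J_i) = \Lambda(G_i) - v_i$. The sets $V(J_i)$ are pairwise disjoint by construction, so the union $\bigcupdot_{i=1}^{|B|}(\Lambda(G_i) - v_i)$ is indeed a disjoint union.

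Next I would handle the vertices in $B$. By \cref{inside barrier lemma}, a vertex $b \in B$ is \lm\ in $G$ if and only if there exists $a \in A$ such that $(a,b)$ is \lm\ in the core $H$ and the contraction vertex in the $B$-fragment corresponding to $a$ is \lm. The latter condition is precisely $a \in A'$, so $\Lambda(G) \cap B = B'$ by definition of $B'$. Finally, since $B' \subseteq B$ is disjoint from every $V(J_i)$, the full decomposition $\Lambda(G) = \bigcupdot_{i=1}^{|B|}(\Lambda(G_i) - v_i) \,\cupdot\, B'$ holds as a disjoint union.

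There is no real obstacle to surmount here: the two preceding lemmas do all the substantive matching-theoretic work, and the present theorem is essentially a bookkeeping consequence obtained by restricting to the two parts of the vertex partition $V(G) = B \cupdot (V(G) - B)$. The only subtlety is ensuring that the disjoint-union notation $\cupdot$ is justified, which amounts to the observation that the fragments $G_i$ share only the (excluded) contraction vertices, and that $B'$ is a subset of $B$ and hence disjoint from the $V(J_i)$'s.
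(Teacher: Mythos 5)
Your proposal is correct and matches the paper's intended argument exactly: the paper gives no written proof, stating only that the result follows ``by invoking'' \cref{outside barrier lemma,inside barrier lemma}, which is precisely the bookkeeping you carry out. Nothing is missing.
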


We now define a family $\G$ whose members satisfy $\lambda = \B$ (as proven below). Firstly, every cubic brick belongs to $\G$. Secondly, if $H[A,B]$ is any \bcc3, then every graph obtained from~$H$ by splicing with members of $\G$ at each vertex in~$A$, belongs to $\G$. For instance, the graph shown in \cref{fig: k33 spliced with 3 k4s}, that may be obtained by splicing with a copy of $K_4$ at each vertex in one color class of $K_{3,3}$, is a member of $\G$.

\begin{proposition}\label{beta lower bound 3-conn - easy direction}
	Every member $G$ of the (infinite) family $\G$ is a \mbox{$3$-connected} (nonbipartite) cubic graph that satisfies $\lambda(G)=\B(G)=n(G)$.
\end{proposition}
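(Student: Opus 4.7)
The plan is to induct on the recursive construction of $\G$. The base case is when $G$ is itself a cubic brick: by \cref{brick iff bicritical and 3-conn}, $G$ is \mbox{$3$-connected}, it is nonbipartite by definition, and by \cref{in a cubic brick every vertex is lm} it satisfies $\lambda(G)=\B(G)=n(G)$. The inductive step takes $G$ to be obtained from a \bcc3\ $H[A,B]$ by splicing, at each $a \in A$, with some $G_a \in \G$ at a specified vertex $v_a$. Cubicity and \mbox{$3$-connectivity} of $G$ follow from iterated application of \cref{splicing of two 3-conn cubic graphs is 3-conn}.

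The conceptual heart of the argument is the observation that the color class~$B$ of $H$ is itself a barrier of $G$. Indeed, removing $B$ from $G$ separates it into precisely the pieces $G_a - v_a$, one for each $a \in A$; each such piece is connected (since each $G_a$ is \mbox{$3$-connected} by induction) and of odd order (since each $G_a$ is cubic and hence of even order), so $\codd(G-B) = |A| = |B|$, the last equality being forced by the cubic bipartite structure of~$H$. Moreover, the core of $G$ with respect to~$B$ is precisely $H$, and the $B$-fragment corresponding to $a \in A$ is precisely $G_a$ with contraction vertex $v_a$. This sets up clean applications of \cref{Lambda in terms of Lambda and Rho of barriers and fragments} and \cref{beta over a barrier}.

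The numerical conclusion then drops out. By the inductive hypothesis, every vertex of each $G_a$ is \lm, so the set~$A'$ from \cref{Lambda in terms of Lambda and Rho of barriers and fragments} equals all of~$A$. Because $H$ is \mbox{$3$-connected}, it contains no $2$-cut, so \cref{each edge is lm unless it participates in a 2-cut} forces every edge of $H$ to yield a \lm\ pair; hence every $b \in B$ is a \lm\ partner of some $a \in A = A'$, giving $B' = B$. The theorem therefore yields $\Lambda(G) = \bigcupdot_{a \in A}(V(G_a) - v_a) \cupdot B = V(G)$, so $\lambda(G) = n(G)$; and \cref{beta over a barrier} combined with $\B(G_a) = n(G_a)$ gives $\B(G) = \sum_{a \in A} n(G_a) = n(G)$, again using $|A| = |B|$. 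Nonbipartiteness of $G$ then follows from $\lambda(G) > 0$ via \cref{lambda is 0 for bipartite graphs}, and the infiniteness of $\G$ is immediate (for instance, take $K_4 \in \G$ and iterate splicings with $K_{3,3}$). The principal obstacle—rather, the only nontrivial point—is the structural identification of $B$ as a barrier with core $H$ and fragments $\{G_a\}$; beyond that, the tools developed earlier in the section do essentially all the work.
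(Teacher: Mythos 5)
Your proposal is correct and follows essentially the same route as the paper: induction on the recursive construction, with the brick case handled by \cref{in a cubic brick every vertex is lm}, and the inductive step resting on the identification of $B$ as a barrier whose fragments are the spliced members $G_a$, followed by applications of \cref{beta over a barrier}, \cref{each edge is lm unless it participates in a 2-cut} and \cref{Lambda in terms of Lambda and Rho of barriers and fragments}. You supply somewhat more detail than the paper does (explicitly verifying that $B$ is a barrier via the odd-components count, and addressing nonbipartiteness and infiniteness), but the argument is the same.
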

\begin{proof}
	The fact that each member is cubic and \mbox{$3$-connected} follows from \cref{splicing of two 3-conn cubic graphs is 3-conn}.
	We proceed by induction. If $G$ is a brick, by \cref{in a cubic brick every vertex is lm}, the desired conclusion holds.
	Otherwise, $G$ is constructed from a \bcc3\ $H[A,B]$ by splicing with members of $\G$ --- say, $G_1,G_2,\dots,G_{|A|}$ --- at each vertex of $A$.

	By the induction hypothesis, $\lambda(G_i)=\B(G_i)=n(G_i)$ for each $i \in \{1,2,\dots,|A|\}$. Observe that:
	\[n(G)=\sum_{i=1}^{|A|}n(G_i)=\sum_{i=1}^{|A|}\B(G_i)=\B(G)\]
	where the first equality is easily verified by counting, and the last one follows from \cref{beta over a barrier}.
	Note that $B$ is a barrier of $G$ and that $G_1,G_2,\dots,G_{|A|}$ are the $B$-fragments. Since the contraction vertex of every \mbox{$B$-fragment} is \lm, by \cref{each edge is lm unless it participates in a 2-cut} and \cref{Lambda in terms of Lambda and Rho of barriers and fragments}, every vertex of $G$ is \lm. In summary, $\lambda(G)=n(G)=\B(G)$.
\end{proof}

We are now ready to prove our bound $\lambda \geqslant \B$, as well as a complete characterization of the tight examples, for the case of \cc3s.

\begin{theorem}\label{beta lower bound 3-conn}
	Every \cc3\ $G$ satisfies the lower bound $\lambda(G) \geqslant \B(G)$. Furthermore, if $G$ is bipartite then $\lambda(G)=\B(G)=0$; on the other hand, if $G$ is nonbipartite, the following are equivalent:
	\begin{enumerate}[label=(\roman*)]
		\item $\lambda(G) = \B(G)$,
		\item $\B(G) = n(G)$,
		\item $G \in \G$.
	\end{enumerate}
\end{theorem}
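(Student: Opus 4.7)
The plan is to induct on $n(G)$. The bipartite subcase is immediate: Proposition~\ref{lambda is 0 for bipartite graphs} gives $\lambda(G)=0$, and Proposition~\ref{mcg is bipartite iff b=0} gives $\B(G)=0$. For the nonbipartite subcase, the base case is when $G$ is a brick, where Proposition~\ref{in a cubic brick every vertex is lm} yields $\lambda(G)=\B(G)=n(G)$ and $G\in\G$ holds by definition, so all three conditions $(i)$--$(iii)$ hold together. Otherwise, by Corollary~\ref{a 2 conn cubic graph is not theta or brick iff there is a nontrivial barrier}, $G$ admits a nontrivial barrier $B$; I work with the core $H[A,B]$ and the $B$-fragments $G_1,\dots,G_{|B|}$ (with contraction vertices $v_1,\dots,v_{|B|}$), all of which are 3-connected cubic by Lemma~\ref{core and fragments of a 3-conn cubic graph are 3-conn and cubic}.

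The lower bound follows from assembling three identities. Theorem~\ref{Lambda in terms of Lambda and Rho of barriers and fragments} gives $\lambda(G)=\sum_i\lambda(G_i)-|A'|+|B'|$, where $A'=\{a_i : v_i\in\Lambda(G_i)\}$ and $B'\subseteq B$ is as defined therein; Corollary~\ref{beta over a barrier} gives $\B(G)=\sum_i\B(G_i)$; the inductive hypothesis gives $\lambda(G_i)\geq\B(G_i)$ for each $i$. What remains is $|B'|\geq|A'|$, which I plan to derive by noting that $H$ is 3-connected bipartite cubic, so Corollary~\ref{each edge is lm unless it participates in a 2-cut} makes every edge of $H$ a $\lambda$-matchable pair, giving $B'\supseteq N_H(A')$; Proposition~\ref{characterization of bmcgs}$(iii)$ then yields $|N_H(A')|\geq|A'|+1$ whenever $A'$ is a proper nonempty subset of $A$, and trivially $|N_H(A)|=|B|=|A|$ when $A'=A$, so $|B'|\geq|A'|$ in all cases.

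For the equivalences, $(iii)\Rightarrow(i)$ and $(iii)\Rightarrow(ii)$ follow from Proposition~\ref{beta lower bound 3-conn - easy direction}. For $(i)\Rightarrow(ii)$, equality $\lambda(G)=\B(G)$ forces both $\lambda(G_i)=\B(G_i)$ for each $i$ and $|B'|=|A'|$; the latter together with the strict Hall-type inequality above forces $A'\in\{\emptyset,A\}$. The case $A'=\emptyset$ would mean that no $v_i$ is $\lambda$-matchable in $G_i$, and when paired with $\lambda(G_i)=\B(G_i)$, the inductive hypothesis forces each $G_i$ to be bipartite (else $G_i\in\G$, making every vertex of $G_i$ $\lambda$-matchable, including $v_i$); but then the bipartite core $H$ together with bipartite fragments yields a bipartite $G$, contradicting the standing assumption. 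Hence $A'=A$; a bipartite $G_i$ is now excluded since $\Lambda(G_i)=\emptyset$ would contradict $a_i\in A'$, so every $G_i$ is nonbipartite with $\lambda(G_i)=\B(G_i)$, and induction yields $\B(G_i)=n(G_i)$; summing and using $n(G)=\sum_i n(G_i)$ gives $\B(G)=n(G)$. For $(ii)\Rightarrow(iii)$, the relations $\B(G)=n(G)=\sum_i n(G_i)$, $\sum_i\B(G_i)=\B(G)$, and $\B(G_i)\leq\lambda(G_i)\leq n(G_i)$ (the first inequality by the part already proved) together force $\B(G_i)=n(G_i)$ for every $i$; each such $G_i$ is nonbipartite (bipartite gives $\B=0<n$), so by induction $G_i\in\G$, and $G\in\G$ by the recursive definition since $G$ is the iterated splicing of $H$ with the $G_i$'s at the vertices of $A$.

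The main obstacle I expect is the equality analysis in $(i)\Rightarrow(ii)$: converting the tight equality $|B'|=|A'|$ via the strict Hall inequality into the dichotomy $A'\in\{\emptyset,A\}$, and then ruling out $A'=\emptyset$ by the bipartiteness-preservation argument about splicing (gluing bipartite pieces into a bipartite core yields a bipartite graph). This is also the only place where the hypothesis that $G$ is nonbipartite is actually used in the inductive step.
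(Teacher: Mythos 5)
Your proposal is correct and follows essentially the same route as the paper: induction on order with bricks as the base case, decomposition via a nontrivial barrier into the core and the $B$-fragments, the counting identity $\lambda(G)=\sum_i\lambda(G_i)-|A'|+|B'|$, and the Hall-type inequality $|B'|\geqslant|N_H(A')|\geqslant|A'|$ with its equality analysis forcing $A'=A$. The only (cosmetic) differences are that you close the equivalence cycle via $(i)\Rightarrow(ii)\Rightarrow(iii)$ rather than proving $(i)\Rightarrow(iii)$ directly, and you rule out $A'=\emptyset$ by a contradiction at the end instead of observing upfront (via $\B(G)=\sum_i\B(G_i)>0$) that some fragment is nonbipartite.
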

\begin{proof}
	Note that if $G$ is bipartite, $\lambda(G)=\B(G)=0$ by \cref{lambda is 0 for bipartite graphs,mcg is bipartite iff b=0}. Now, suppose that~$G$ is nonbipartite.
	By \cref{beta lower bound 3-conn - easy direction}, $(iii)$~implies~$(i)$~and~$(ii)$.
	Clearly, $n(G) \geqslant \lambda(G)$; consequently, if we assume that the inequality $\lambda(G) \geqslant \B(G)$ holds, the reader may verify that $(ii)$~implies~$(i)$. Ergo, it suffices to prove that $\lambda(G) \geqslant \B(G)$ and that $(i)$~implies~$(iii)$. To this end, we proceed by induction on the order.

	If $G$ is a brick, by \cref{in a cubic brick every vertex is lm} and definitions of $\B(G)$ and $\G$, the desired conclusion holds.
	Now, suppose that $G$ is not a brick.
	By \cref{a 2 conn cubic graph is not theta or brick iff there is a nontrivial barrier}, $G$ has a nontrivial barrier~$B$.
	Let $H[A,B]$ denote the core of $G$,
	and let $G_1,G_2,\dots,G_{|B|}$ denote the $B$-fragments; by \cref{core and fragments of a 3-conn cubic graph are 3-conn and cubic}, all of these graphs are cubic as well as \mbox{$3$-connected}.
	By \cref{beta over a barrier} and \cref{mcg is bipartite iff b=0}, at least one of the $B$-fragments, say $G_1$, is nonbipartite.

	We define $A' \subseteq A$ as follows: $a \in A'$ if the contraction vertex of the corresponding $B$-fragment is \lm.
	We define $B' \subseteq B$ as follows: $b \in B'$ if $b$ is \lm\ in $G$.
	By \cref{outside barrier lemma}:
	\[ \ds \lambda(G) = \left(\sum_{i=1}^{|B|}\lambda(G_i)\right) - |A'| + |B'|\]
	By \cref{each edge is lm unless it participates in a 2-cut} and \cref{inside barrier lemma}, every vertex in $N_H(A')$ is \lm\ in $G$; whence, $N_H(A') \subseteq B'$.
	Consequently, $|B'| \geqslant |N_H(A')| \geqslant |A'|$.
	Also, by the induction hypothesis, $\lambda(G_i) \geqslant \B(G_i)$ holds, for each~$i~\in~\{1,2,\dots,|B|\}$.
	Hence: \[\ds \lambda(G) = \left(\sum_{i=1}^{|B|}\lambda(G_i)\right) + |B'|-|A'| \geqslant \left(\sum_{i=1}^{|B|}\B(G_i)\right) + |B'|-|A'| \geqslant \B(G)\] where the last inequality follows from the fact that $|B'| \geqslant |A'|$ and \cref{beta over a barrier}.

	Furthermore, if $\lambda(G)=\B(G)$ holds, it follows that $\lambda(G_i) = \B(G_i)$ for each $i~\in~\{1,2,\dots,|B|\}$ and $|B'|=|N_H(A')|=|A'|$.
	In particular, since $G_1$ is nonbipartite (as noted earlier), $G_1 \in \G$ by the induction hypothesis;
	by \cref{beta lower bound 3-conn - easy direction}, the contraction vertex of $G_1$ is \lm.
	Consequently, $A'$ is nonempty and since $H$ is bipartite matching covered, by \cref{characterization of bmcgs}, $A'=A$.
	In other words, the contraction vertex of each \mbox{$B$-fragment} is \lm; by \cref{lambda is 0 for bipartite graphs}, each $B$-fragment is nonbipartite and by the induction hypothesis, each of them belongs to $\G$.
	By definition, $G$ belongs to $\G$.

	This completes the proof of \cref{beta lower bound 3-conn}.
\end{proof}

We now proceed to extend this bound to all \cc2s. In order to do so, we use $2$-cuts as our induction tool (as we did in the preceding section) and the following lemma whose reverse implication is proved in \cite[Lemma 2.13 $(iii)$]{clz25}.

\begin{lemma}\label{lm vertices across a 2-cut}
	Let $G$ be a \cc2 that has a $2$-cut $C$, let $G_1$ and $G_2$ denote its marked \mbox{$C$-components}, and let $v \in V(G_1)$.
	Then, $v$ is \lm\ in $G$ if and only if $v$ is \lm\ in $G_1$.
	Consequently, $\Lambda(G) = \Lambda(G_1) \cupdot \Lambda(G_2)$, and $\lambda(G)=\lambda(G_1)+\lambda(G_2)$.
\end{lemma}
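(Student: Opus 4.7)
The plan is to prove both implications via explicit constructions, using \paritylemma\ in one direction and the matching-covered-ness of the $C$-components in the other.  Let me set notation: write $C = \{e_1, e_2\}$ with $e_i = u_i x_i$, where $X := V(G_1) \subseteq V(G)$ contains $u_1, u_2$ and $\overline{X} = V(G_2)$ contains $x_1, x_2$.  The marker edges are $f_1 := u_1 u_2 \in E(G_1)$ and $f_2 := x_1 x_2 \in E(G_2)$.  Since $G$ is cubic and every vertex has odd degree, \paritylemma\ applied to $X$ (with respect to the edge set $E(G)$) gives $|X| \equiv |C| = 2 \pmod 2$, so $|X|$ and $|\overline{X}|$ are both even.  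Moreover, by the $2$-cut decomposition machinery in the excerpt, each $G_i$ is a \cc2 and hence, by Sch\"onberger's result, a \mcg.

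For the forward direction, let $M$ be a $v$-matching of $G$ and assume $v \in X$.  Applying \paritylemma\ to $X$ with respect to the spanning subgraph $M$ (each vertex of $X$ has odd degree in $M$), we obtain $|M \cap C| \in \{0, 2\}$.  If $|M \cap C| = 0$, set $M_1 := M \cap E(G[X])$; if $|M \cap C| = 2$, set $M_1 := (M \cap E(G[X])) \cup \{f_1\}$.  A routine degree count, splitting on whether $v \in \{u_1, u_2\}$, shows that in both cases $d_{M_1}(v) = 3$ and $d_{M_1}(w) = 1$ for every other $w \in X$; hence $M_1$ is a $v$-matching of $G_1$.  (For example, in the case $|M \cap C| = 2$ with $v \neq u_1, u_2$, each $u_i$ has $d_M(u_i) = 1$ contributed by the cut edge, so $d_{M \cap E(G[X])}(u_i) = 0$, and the marker $f_1$ restores degree one.)

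For the reverse direction, given a $v$-matching $M_1$ of $G_1$, I need perfect matchings of $G_2$ both containing and avoiding the marker $f_2$.  Since $G_2$ is \mc, a perfect matching $M_2 \ni f_2$ exists.  For the other, pick any edge $g \in E(G_2)$ sharing an endpoint with $f_2$ and distinct from $f_2$ (such $g$ exists as $G_2$ is cubic with $n(G_2) \ge 2$); by matching-coveredness, $g$ lies in some perfect matching $N_2$, and since $g$ and $f_2$ share an endpoint, $f_2 \notin N_2$.  Now if $f_1 \in M_1$, set $M := (M_1 - f_1) \cup (M_2 - f_2) \cup C$; if $f_1 \notin M_1$, set $M := M_1 \cup N_2$.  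In each case, a direct degree check (again splitting on whether $v$ is a marker-endpoint) confirms that $M$ is a $v$-matching of $G$.

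For the ``consequently'' conclusion, observe that $V(G) = V(G_1) \cupdot V(G_2)$ since the marked $C$-components partition the vertices of $G$; the main equivalence then yields $\Lambda(G) = \Lambda(G_1) \cupdot \Lambda(G_2)$, and $\lambda(G) = \lambda(G_1) + \lambda(G_2)$ follows by taking cardinalities.  The main obstacle is bookkeeping: the degree checks must be done carefully in the subcases $v = u_i$, since the marker edge $f_1$ behaves differently depending on whether $v$ is incident to it and whether the cut edges appear in $M$.  Everything else is routine once the existence of the two flavors of perfect matching in $G_2$ is established.
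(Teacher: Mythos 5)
Your proof is correct and follows essentially the same route as the paper's: decompose a $v$-matching across the $2$-cut using \paritylemma\ and the marker edge in one direction, and in the other direction extend a $v$-matching of $G_1$ by a perfect matching of $G_2$ chosen to contain or avoid its marker edge according to whether $M_1$ uses $f_1$. The only difference is that you spell out details the paper leaves to the reader (the existence of the perfect matching $N_2$ avoiding $f_2$, and the degree bookkeeping), which is fine.
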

\begin{proof}
	We let $e_1$ and $e_2$ denote the marker edges of $G_1$ and $G_2$, respectively.
	First, suppose that $v$ is \lm~in $G$, and let $M$ denote a \mbox{$v$-matching}.
	By \paritylemma, $|C \cap M|$ is even.
	If $C\cap M = \emptyset$, let $M_1:=M \cap E(G_1)$;
	otherwise, let $M_1:=(M \cap E(G_1)) + e_1$.
	In either case, the reader may verify that $M_1$ is a $v$-matching of $G_1$.
	Conversely, suppose that $v$ is \lm~in $G_1$, and let $M_1$ denote a \mbox{$v$-matching}.
	Let $M_2$~and~$N_2$ denote perfect matchings of $G_2$ containing $e_2$ and not containing~$e_2$, respectively.
	If $e_1 \in M_1$, let $M:=(M_1-e_1) + (M_2-e_2) + C$;
	otherwise, let $M:=M_1 + N_2$.
	In either case, the reader may verify that $M$ is a $v$-matching of~$G$.
\end{proof}

We now define a family $\G'$ of \cc2s whose members satisfy $\lambda = \B$ as follows: $\G'$~comprises those graphs each of whose \mbox{$3$-connected} pieces is either bipartite or belongs to~$\G$.
For instance, the graph shown in \cref{fig: k4 glue k33} is a member of $\G'$.
The following is easily proved by invoking \cref{beta lower bound 3-conn}, \cref{bstar across even 2-cut}, \cref{lm vertices across a 2-cut} and \cref{theta thetabar and n-nonbip over a 2 cut}.

\begin{corollary}
	Every \cc2~$G$ satisfies the lower bound $\lambda(G) \geqslant \B(G)$; furthermore, the following are equivalent:
	\begin{enumerate}[label=(\roman*)]
		\item $\lambda(G) = \B(G)$,
		\item $\B(G) = n_{\sf nonbip}(G)$,
		\item $G \in \G'$. \qed
	\end{enumerate}
\end{corollary}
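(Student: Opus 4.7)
The plan is induction on $n(G)$, with the base case $n(G) \leqslant 4$ trivial and the real base case being when $G$ is \con3, which is handled directly by \cref{beta lower bound 3-conn}. For the inductive step, pick any \mbox{$2$-cut} $C$ and let $G_1$ and $G_2$ denote the marked $C$-components. By \cref{lm vertices across a 2-cut}, $\lambda(G)=\lambda(G_1)+\lambda(G_2)$; by \cref{bstar across even 2-cut}, $\B(G)=\B(G_1)+\B(G_2)$; by \cref{theta thetabar and n-nonbip over a 2 cut}, $n_{\sf nonbip}(G)=n_{\sf nonbip}(G_1)+n_{\sf nonbip}(G_2)$. Both $G_1$ and $G_2$ are \cc2s of smaller order, so the induction hypothesis applies to each.

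The inequality $\lambda(G) \geqslant \B(G)$ follows by summing the two inductive inequalities. For the three-way equivalence I will prove the cycle $(iii) \Rightarrow (ii) \Rightarrow (i) \Rightarrow (iii)$. For $(iii) \Rightarrow (ii)$: if $G \in \G'$, then each \con3 piece is either bipartite or belongs to $\G$; in the former case it contributes $0$ to both $\B$ and $n_{\sf nonbip}$ (by \cref{mcg is bipartite iff b=0} and the definition of $n_{\sf nonbip}$), while in the latter case it contributes $n$ to both (by \cref{beta lower bound 3-conn - easy direction}). Summing over \con3 pieces via the additivity results above gives $\B(G)=n_{\sf nonbip}(G)$.

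For $(ii) \Rightarrow (i)$, I first establish the auxiliary bound $\lambda(G) \leqslant n_{\sf nonbip}(G)$ for every \cc2 $G$, by an easy parallel induction: in the \con3 case it follows from $\lambda(G) \leqslant n(G)$ (using \cref{lambda is 0 for bipartite graphs,mcg is bipartite iff b=0} in the bipartite subcase), and the inductive step follows again from the additivity via \cref{lm vertices across a 2-cut} and \cref{theta thetabar and n-nonbip over a 2 cut}. Combining this with $\lambda(G) \geqslant \B(G) = n_{\sf nonbip}(G)$ gives $(i)$. Finally, for $(i) \Rightarrow (iii)$: if $G$ is \con3, apply \cref{beta lower bound 3-conn}; otherwise, equality of the sums $\lambda(G_1)+\lambda(G_2)=\B(G_1)+\B(G_2)$ together with $\lambda(G_i) \geqslant \B(G_i)$ forces $\lambda(G_i) = \B(G_i)$ for each $i$, so by induction $G_1,G_2 \in \G'$. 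Since by \cref{unique 2cd} the \con3 pieces of $G$ are precisely those of $G_1$ together with those of $G_2$, the definition of $\G'$ yields $G \in \G'$.

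No step appears to be a genuine obstacle — the corollary is essentially a bookkeeping exercise once the \con3 case is in hand, since all the relevant invariants ($\lambda$, $\B$, $n_{\sf nonbip}$) are additive across \mbox{$2$-cuts}. The only point that requires mild care is noting that $\lambda(G) \leqslant n_{\sf nonbip}(G)$ holds unconditionally, which is needed so that the assumption $\B(G)=n_{\sf nonbip}(G)$ can squeeze $\lambda(G)$ from above against the lower bound we just proved.
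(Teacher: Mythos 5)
Your proposal is correct and follows exactly the route the paper intends: the paper gives no written proof but states that the corollary follows from \cref{beta lower bound 3-conn}, \cref{bstar across even 2-cut}, \cref{lm vertices across a 2-cut} and \cref{theta thetabar and n-nonbip over a 2 cut}, which is precisely the induction over $2$-cuts you carry out. Your explicit observation that $\lambda \leqslant n_{\sf nonbip}$ holds unconditionally is the right piece of bookkeeping needed for $(ii) \Rightarrow (i)$.
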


We now redirect our attention towards characterizing \cc2s that satisfy $\lambda=n$.
As usual, we handle \cc3s first and then use $2$-cuts as an induction tool.
It turns out that in order to describe \cc3s that satisfy $\lambda=n$, we also need to describe those that satisfy $\lambda=n-1$.
Interestingly, we find it easier to characterize them together, rather than characterizing them one by one.

To this end, we recursively describe a family $\N$ of ordered pairs; the first element of each member is a graph~$G$, and the second member is a subset of~$V(G)$.
Firstly, $(G,V(G)) \in \N$ for every cubic brick~$G$.
Secondly, if $H[A,B]$ is any \bcc3 that is distinct from $\Theta$ \mbox{where $A:=\{a_1,a_2,\dots,a_{|B|}\}$;} $A_0$ is any subset of $A$ such that for each $b \in B$, there exists an $a \in A_0$ so that~$(a,b)$ is a \lm\ pair in $H$; and $A_1$ is any subset of $A-A_0$ of cardinality at most one; then the pair $(G,X)$ defined below is also in $\N$.
\begin{enumerate}[label=(\roman*)]
	\item $G$ is obtained from $H$ by splicing at each $a_i \in A_0$ with any graph $G_i$ such that $(G_i,V(G_i)) \in \N$, and splicing at each $a_i \in A-A_0-A_1$ with any graph $G_i$ at the vertex $v_i$ such that $(G_i, \{v_i\}) \in \N$.
	\item If $A_1$ is empty, $X:=V(G)$; otherwise, $X:=A_1$.
\end{enumerate}

Observe that, for each member $(G,X) \in \N$ defined above recursively, the set $B$ is a nontrivial barrier of~$G$; consequently, $G$ is not a brick. Furthermore, if $A_1$ is empty then each component of~$G-B$ is nontrivial, whereas if $A_1$ is singleton then $G-B$ has precisely one trivial component whose vertex set is $A_1$.
For instance, the ordered pair comprising the graph shown in \cref{fig: k33 spliced with 2 k4s}, and its vertex subset~$\{v\}$, is a member of $\N$.

\fig{A \cc3 that satisfies $\lambda=n-1$}{fig: k33 spliced with 2 k4s}{\begin{tikzpicture}[scale=1.35]

\tikzmath{
	\height = 0.7;
	\halfsidelength = sin(30)*\height;
	\sidelength = 2*\halfsidelength;
	\offset = 0;
	\y = -0.7;
}


\foreach \i in {1,2}{
	\node[vtx] (b\i) at (1.25*\i,0.825) {};
	\tikzmath{ \x = 1.25*\i+\offset*(\i-2); }
	\node[vtx] (a\i2) at (\x,\y+2*\height/3) {};
	\node[vtx] (a\i1) at (\x-\halfsidelength, \y-\height/3) {};
	\node[vtx] (a\i3) at (\x+\halfsidelength, \y-\height/3) {};
}

\node[vtx] (b3) at (1.25*3,0.825) {};
\node[vtx] (a3) at (1.25*3,\y) {};


\foreach \i in {1,2}{
	\draw (b2) -- (a\i2);
	\draw (a\i1) -- (a\i2) -- (a\i3) -- (a\i1);
}
\draw (a21) -- (b1);
\draw (a23) -- (b3);

\draw (a3) -- (b2);
\draw (a3) -- (b3);

\draw plot [smooth,tension=1.1] coordinates {(a13) ($(a22)!0.35!(b2)$) (b3)};
\draw plot [smooth,tension=1.1] coordinates {(a3) ($(a22)!0.35!(b2)$) (b1)};
\draw plot [smooth,tension=1.1] coordinates {(a11) ($(a11)!0.525!(b1)+(-0.225,0)$) (b1)};

\node at ($(a3)+(0.2,0)$) {$v$};

\end{tikzpicture}}

The ordered pair comprising the graph $G$ shown in \cref{fig: lambda > beta example}, and its vertex set $V(G)$, is a member of~$\N$.
Its membership may be explained as follows. Let $H[A,B]:=K_{3,3}$ where $A:=\{a_1,a_2,a_3\}$, let~$A_0:=\{a_1,a_2\}$, let both $G_1$ and $G_2$ be $K_4$, and let $G_3$ be the graph shown in \cref{fig: k33 spliced with 2 k4s} with $v$ as its vertex where splicing is done.
We remark that $G$ may also be obtained by splicing two copies of $G_3$ at their respective copies of $v$, although this viewpoint does not explain membership in $\N$.
We now argue that each graph that appears in $\N$ satisfies $\lambda \in \{n-1,n\}$.

\begin{proposition}\label{characterization of cc3s with lambda at least n-1 - easy direction}
	Each member $(G,X)$ of the family $\N$ is a \cc3 that satisfies the following: if $X=V(G)$ then $\Lambda(G)=V(G)$ whereas if $|X|=1$ then $\Lambda(G)=V(G)-X$.
\end{proposition}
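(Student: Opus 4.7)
The plan is to proceed by induction on the recursive construction of the family $\N$. For the base case, when $(G, V(G)) \in \N$ via $G$ being a cubic brick, \cref{brick iff bicritical and 3-conn} implies that $G$ is \mbox{$3$-connected}, and \cref{in a cubic brick every vertex is lm} yields $\Lambda(G) = V(G) = X$.

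For the inductive step, suppose that $G$ is obtained from a \bcc3\ $H[A,B]$ (with $H \neq \Theta$) by splicing at each $a_i \in A - A_1$ with the corresponding $G_i$ at $v_i$, as stipulated in the recursive definition. By the induction hypothesis each $G_i$ is a \cc3; hence, performing the splicings sequentially (which is permissible since the splicing vertices $a_i$ are pairwise distinct in $H$ and each splicing at $a_i$ leaves the other vertices of $A$ intact), repeated application of \cref{splicing of two 3-conn cubic graphs is 3-conn} shows that $G$ is a \cc3.

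Next I would compute $\Lambda(G)$ via \cref{Lambda in terms of Lambda and Rho of barriers and fragments}. Note that $B$ remains a nonempty barrier of $G$, the corresponding core is precisely $H[A,B]$, and the $B$-fragments are $G_i$ (with contraction vertex $v_i$) for each $a_i \in A - A_1$, together with a copy of $\Theta$ (arising from contracting $V(G) - a_i$ to a single vertex) for each $a_i \in A_1$, since $\{a_i\}$ is then an odd component of $G-B$. I identify the subset $A' \subseteq A$ of indices whose fragment's contraction vertex is \lm: for $a_i \in A_0$, the induction hypothesis gives $\Lambda(G_i) = V(G_i)$, hence $v_i \in \Lambda(G_i)$ and $a_i \in A'$; for $a_i \in A - A_0 - A_1$, the induction hypothesis gives $\Lambda(G_i) = V(G_i) - \{v_i\}$, so $a_i \notin A'$; and for $a_i \in A_1$, the fragment $\Theta$ has no \lm\ vertex at all, since no vertex of $\Theta$ has three distinct neighbors (see the opening remark in the proof of \cref{characterization of non lm vertices}), so $a_i \notin A'$. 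Therefore $A' = A_0$, and since the definition of $\N$ stipulates that every $b \in B$ admits some $a \in A_0$ with $(a,b)$ \lm\ in $H$, we conclude that $B' = B$.

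Finally, assembling via \cref{Lambda in terms of Lambda and Rho of barriers and fragments} yields
\[ \Lambda(G) ~=~ \bigcupdot_{a_i \in A}\big(\Lambda(G_i) - v_i\big) \cupdot B' ~=~ \bigcupdot_{a_i \in A - A_1}\big(V(G_i) - v_i\big) \cupdot B ~=~ V(G) - A_1, \]
since the $\Theta$-fragments from $A_1$ contribute $\emptyset$ while each remaining fragment contributes all of its vertices other than its contraction vertex, and these together with $B$ comprise all of $V(G) - A_1$. When $A_1 = \emptyset$, this yields $\Lambda(G) = V(G) = X$; when $A_1 = \{a_1\}$, we have $X = A_1$ and $\Lambda(G) = V(G) - X$, as required. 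The only subtle point is the careful handling of the trivial $\Theta$-fragment arising from the (at most one) vertex of $A_1$: although its contraction vertex is formally well-defined, it contributes no \lm\ vertex to $\Lambda(G)$, and this is precisely what makes the unique vertex of $A_1$ (when present) fail to be \lm\ in $G$.
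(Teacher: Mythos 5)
Your proof is correct and follows essentially the same route as the paper's: induction on the recursive construction, with \cref{splicing of two 3-conn cubic graphs is 3-conn} for $3$-connectivity and the barrier $B$ (whose core is $H$ and whose fragments are the $G_i$'s together with a $\Theta$ for the possible vertex of $A_1$) as the organizing structure. The only cosmetic difference is that you invoke \cref{Lambda in terms of Lambda and Rho of barriers and fragments}, which packages \cref{outside barrier lemma,inside barrier lemma}, and dispose of the $A_1$ vertex via its trivial $\Theta$-fragment having no \lm\ vertex, whereas the paper applies those two lemmas directly and rules out the $A_1$ vertex via \cref{characterization of non lm vertices}; both are sound.
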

\begin{proof}
	We proceed by induction on the order.
	If $G$ is a brick, then $X=V(G)$ and $\Lambda(G)=V(G)$; we are done.
	Now suppose that $G$ is not a brick.

	Ergo, $G$ is constructed from a \bcc3 $H[A,B]$ of order six or more, subsets $A_0$ and $A_1$ of $A$, and smaller members $(G_i,X_i) \in \N$, for each $a_i \in A-A_1$, as described in the definition of~$\N$.
	By \cref{splicing of two 3-conn cubic graphs is 3-conn}, $G$ is a \cc3.
	Note that $A_1$ is either singleton or empty; in the former case, by \cref{characterization of non lm vertices}, the only member of $A_1$ is not \lm\ in~$G$.
	Consequently, in both cases, it suffices to argue that each member of $V(G)-A_1$ is \lm\ in~$G$; see condition $(ii)$ in the definition of $\N$.

	By condition $(i)$ in definition of $\N$ and by the induction hypothesis, we infer that:
	$\Lambda(G_i)=V(G_i)$ for each $a_i \in A_0$; whereas $\Lambda(G_i)=V(G_i)-v_i$ for each $a_i \in A-A_0-A_1$.
	We now invoke \cref{outside barrier lemma} to deduce that each member of $V(G)-B-A_1$ is \lm\ in $G$.

	Now, let $b \in B$. By definition of $A_0$, there exists an $a_i \in A_0$ such that $(a_i,b)$ is \lm\ in~$H$. Since $v_i$ is \lm\ in $G_i$ as noted above, by \cref{inside barrier lemma}, $b$ is \lm\ in $G$. Thus, each member of $B$ is \lm\ in $G$.
\end{proof}

We proceed to prove the converse, and thus establish our characterization of \cc3s that satisfy $\lambda \in \{n-1,n\}$.

\begin{theorem}\label{characterization of cc3s with lambda at least n-1}
	For every \cc3 $G$:
	\begin{enumerate}[label=(\roman*)]
		\item $\lambda(G)=n(G)$ if and only if $(G,V(G)) \in \N$, and
		\item $\lambda(G)=n(G)-1$ if and only if $(G,\{u\}) \in \N$, where $u$ is its unique vertex that is not \lm.
	\end{enumerate}
\end{theorem}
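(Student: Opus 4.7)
My plan is to induct on $n(G)$, proving the forward implications of $(i)$ and $(ii)$ simultaneously, since the recursive definition of $\N$ interleaves the two cases. The converses are Proposition \ref{characterization of cc3s with lambda at least n-1 - easy direction}. The hypothesis $\lambda(G) \in \{n(G), n(G)-1\}$ excludes bipartite $G$ (Proposition \ref{lambda is 0 for bipartite graphs}) and $G = \Theta$. If $G$ is a brick, Proposition \ref{in a cubic brick every vertex is lm} gives $\lambda(G) = n(G)$, and $(G, V(G)) \in \N$ via the base clause of the definition; the $\lambda = n-1$ sub-case does not arise here. Otherwise, Corollary \ref{a 2 conn cubic graph is not theta or brick iff there is a nontrivial barrier} furnishes a nontrivial barrier to work with.

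The pivotal choice is which barrier to use. In Case $(i)$, I would pick any nontrivial barrier $B$. In Case $(ii)$, letting $u$ denote the unique non-\lm\ vertex, I would invoke \cref{characterization of non lm vertices} to choose a barrier $B^*$ in which $u$ is isolated in $G - B^*$; since $G \neq \Theta$ forces $|N(u)| \geqslant 2$, the barrier $B^* \supseteq N(u)$ is nontrivial, and the $B^*$-fragment corresponding to the singleton odd component $\{u\}$ is precisely $\Theta$. In both cases, let $H[A,B]$ denote the core and let $G_1, \ldots, G_{|B|}$ denote the $B$-fragments with contraction vertices $v_i$; set $A' := \{a_i : v_i \in \Lambda(G_i)\}$, $B' := B \cap \Lambda(G)$, and $\delta_i := n(G_i) - \lambda(G_i) \geqslant 0$. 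Then \cref{Lambda in terms of Lambda and Rho of barriers and fragments} together with $n(G) = \sum_i n(G_i)$ gives the identity
\[ n(G) - \lambda(G) \;=\; \sum_i \delta_i + |A'| - |B'|, \]
and by \cref{inside barrier lemma}, $B'$ consists precisely of those $b \in B$ that are \lm-paired in $H$ with some vertex of $A'$.

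In Case $(i)$, since $\lambda(G) = n(G)$, \cref{outside barrier lemma} says no vertex of $G_i$ other than possibly $v_i$ can be non-\lm, so $\delta_i = [a_i \notin A']$; plugging into the identity forces $|B'| = |B|$ (so $A'$ covers $B$) and rules out any $\Theta$-fragment (which would contribute $\delta = 2$). By induction, $(G_i, V(G_i)) \in \N$ for $a_i \in A'$ and $(G_i, \{v_i\}) \in \N$ for $a_i \notin A'$. The recursive $\N$-construction with $A_0 := A'$ and $A_1 := \emptyset$ then delivers $(G, V(G)) \in \N$. In Case $(ii)$, the fragment $G_{a_u} = \Theta$ contributes $\delta_{a_u} = 2$, while uniqueness of $u$ forces $\delta_j = [a_j \notin A']$ for every other $j$; an analogous computation gives $|B'| = |B^*|$, and induction covers every non-$\Theta$ fragment as before. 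Taking $A_0 := A'$ and $A_1 := \{a_u\}$ (and identifying $a_u$ with $u \in V(G)$, since no splicing happens at $a_u$) yields $(G, \{u\}) \in \N$.

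The main obstacle will be the bookkeeping in Case $(ii)$: verifying that the barrier supplied by \cref{characterization of non lm vertices} yields exactly one $\Theta$-fragment (namely at $\{u\}$) together with $|B'| = |B^*|$, and that the partition $A = A_0 \cupdot A_1 \cupdot (A - A_0 - A_1)$ aligns exactly with the three splice types, $V(G_i)$, $\Theta$, and $\{v_i\}$, demanded by the definition of $\N$. Confirming that the core $H$ is a \bcc3 distinct from $\Theta$ is a routine invocation of \cref{core and fragments of a 3-conn cubic graph are 3-conn and cubic} together with nontriviality of the chosen barrier.
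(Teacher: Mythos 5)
Your proposal is correct and follows essentially the same route as the paper: induction on the order, using an arbitrary nontrivial barrier when $\lambda=n$ and the barrier isolating $u$ (from \cref{characterization of non lm vertices}) when $\lambda=n-1$, then invoking \cref{outside barrier lemma,inside barrier lemma} to identify $A_0$ with the fragments whose contraction vertex is \lm\ and $A_1$ with $\{u\}$ or $\emptyset$. The explicit counting identity via \cref{Lambda in terms of Lambda and Rho of barriers and fragments} is a cosmetic repackaging of the paper's direct set-theoretic argument.
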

\begin{proof}
	By \cref{characterization of cc3s with lambda at least n-1 - easy direction}, it remains to prove the forward implication of each statement.
	To this end, let $G$ be a \cc3 with $\lambda(G) \geqslant n(G)-1$; clearly, $G$ is not $\Theta$.
	We proceed by induction on the order.
	If $G$ is a brick, then $\lambda(G)=n(G)$ and the desired conclusion holds by definition of $\N$.
	Now suppose that $G$ is not a brick.

	If $\lambda(G) = n(G)-1$, we let $u$ denote the only vertex that is not \lm, and by invoking \cref{characterization of non lm vertices}, we let $B$ denote a (nontrivial) barrier such that $u$ is isolated in $G-B$.
	Otherwise, by invoking \cref{a 2 conn cubic graph is not theta or brick iff there is a nontrivial barrier}, we let $B$ denote any nontrivial barrier.
	Let $H[A,B]$ denote the corresponding core with $A:=\{a_1,a_2,\dots,a_{|B|}\}$, let $G_i$ denote the \mbox{$B$-fragment} corresponding to $a_i \in A$, and let $v_i$ denote the contraction vertex of $G_i$.
	By \cref{core and fragments of a 3-conn cubic graph are 3-conn and cubic}, $H$~is a \bcc3 and each~$G_i$ is a \cc3.

	Since each vertex except $u$ (if it exists) is \lm\ in $G$, by \cref{outside barrier lemma}, for each $B$-fragment $G_i$, except for the one corresponding to $u$ (if it exists), $V(G_i)-v_i \subseteq \Lambda(G_i)$; in particular, $\lambda(G_i) \geqslant n(G_i)-1$.
	Now, we define a subset $A_0$ of $A$ as follows: $a_i \in A_0$ if the contraction vertex $v_i$ is \lm\ in $G_i$.
	If $u$ exists, we let $A_1:=\{u\}$; otherwise, let $A_1:= \emptyset$.
	Note that $\lambda(G_i)=n(G_i)$ for each $a_i \in A_0$; consequently, by the induction hypothesis, $(G_i,V(G_i)) \in \N$.
	On the other hand, $\Lambda(G_i) = V(G_i)-v_i$ for each $a_i \in A-A_0-A_1$; consequently, by the induction hypothesis, $(G_i,\{v_i\}) \in \N$. Note that condition $(i)$ in the definition of $\N$ is satisfied.

	Now, we shall argue that our choices of $A_0$ and $A_1$ satisfy the conditions stated in the definition of $\N$.
	For each $b \in B$, since $b$ is \lm\ in $G$, by \cref{inside barrier lemma}, there exists an $a_i \in A$ such that $(a_i,b)$ is \lm\ in $H$ and the contraction vertex $v_i$ is \lm\ in $G_i$.
	Thus, it follows from the definition of~$A_0$ in the preceding paragraph that for each $b \in B$, there exists $a_i \in A_0$ such that $(a_i,b)$ is \lm\ in~$H$.
	Also, observe that $u$ (if it exists) is not in $A_0$; consequently, $A_1$ is a subset of $A-A_0$ of cardinality at most one.

	In order to satisfy condition $(ii)$ in the definition of $\N$, we define $X$ as follows. If $\lambda(G)=n(G)$, we let $X:=V(G)$; otherwise let $X:=\{u\}$. It follows that $(G,X) \in \N$ as desired.
\end{proof}

In order to characterize all \cc2s that satisfy $\lambda=n$,
we define the family $\N'$ as follows. A \cc2 $G'$ belongs to $\N'$ if the pair $(G,V(G))$ belongs to $\N$ for each \con3 piece $G$ of $G'$.
An example is shown in \cref{fig: not 3 connected lambda = n example}.

\fig{A member of $\mathcal{N}'$}{fig: not 3 connected lambda = n example}{\begin{tikzpicture}[scale=1.2]
	\begin{scope}[rotate=90]
		\tikzmath{
				\sidelength = 0.7;
				\halfsidelength = \sidelength/2;
				\height = sin(60)*\sidelength;
				\offset = 0.65;
				\xcoord = 1;
				\ycoord = 0.4;
			}
		\coordinate (b11) at (-\offset + \xcoord-\halfsidelength,\ycoord-\height);
		\coordinate (b12) at (-\offset + \xcoord,\ycoord);
		\coordinate (b13) at (-\offset + \xcoord+\halfsidelength,\ycoord-\height);
	
		\coordinate (b21) at (\offset + \xcoord-\halfsidelength,\ycoord-\height);
		\coordinate (b22) at (\offset + \xcoord,\ycoord);
		\coordinate (b23) at (\offset + \xcoord+\halfsidelength,\ycoord-\height);
	
		\coordinate (a1) at (1 - 1, 1);
		\coordinate (a2) at (1,1);
		\coordinate (a3) at (1 + 1, 1);
	
		\draw (b11) -- (b12) -- (b13) -- (b11);
		\draw (b21) -- (b22) -- (b23) -- (b21);
		\draw (b11) -- (a1);
		\draw (b12) -- (a2);
		\draw (b13) -- (a3);
		\draw (b21) -- (a1);
		\draw (b22) -- (a2);
		\draw (b23) -- (a3);
	
		\begin{scope}[yshift=74, yscale=-1]
			\tikzmath{
				\sidelength = 0.7;
				\halfsidelength = \sidelength/2;
				\height = sin(60)*\sidelength;
				\offset = 0.65;
				\xcoord = 1;
				\ycoord = 0.4;
			}
			\coordinate (2b11) at (-\offset + \xcoord-\halfsidelength,\ycoord-\height);
			\coordinate (2b12) at (-\offset + \xcoord,\ycoord);
			\coordinate (2b13) at (-\offset + \xcoord+\halfsidelength,\ycoord-\height);
	
			\coordinate (2b21) at (\offset + \xcoord-\halfsidelength,\ycoord-\height);
			\coordinate (2b22) at (\offset + \xcoord,\ycoord);
			\coordinate (2b23) at (\offset + \xcoord+\halfsidelength,\ycoord-\height);
	
			\coordinate (2a1) at (1 - 1, 1);
			\coordinate (2a2) at (1,1);
			\coordinate (2a3) at (1 + 1, 1);
	
			\draw (2b11) -- (2b12) -- (2b13) -- (2b11);
			\draw (2b21) -- (2b22) -- (2b23) -- (2b21);
			\draw (2b11) -- (2a1);
			\draw (2b12) -- (2a2);
			\draw (2b13) -- (2a3);
			\draw (2b21) -- (2a1);
			\draw (2b22) -- (2a2);
			\draw (2b23) -- (2a3);
		\end{scope}
	
		\draw (a2) -- (2a2);
		\draw (a3) -- (2a3);
	
		\draw (a1)node[vtx]{} (a2)node[vtx]{} (a3)node[vtx]{} (b11)node[vtx]{} (b12)node[vtx]{} (b13)node[vtx]{} (b21)node[vtx]{} (b22)node[vtx]{} (b23)node[vtx]{};
	
		\draw (2a1)node[vtx]{} (2a2)node[vtx]{} (2a3)node[vtx]{} (2b11)node[vtx]{} (2b12)node[vtx]{} (2b13)node[vtx]{} (2b21)node[vtx]{} (2b22)node[vtx]{} (2b23)node[vtx]{};
	\end{scope}

	\begin{scope}[shift={(-1.3,-1.5)}, yscale=-1]
		\tikzmath{
			\height = 1;
			\halfsidelength = sin(30)*\height;
			\sidelength = 2*\halfsidelength;
		}

		\node[vtx] (v1) at (0,0) {};
		\node[vtx] (v2) at (-\halfsidelength, -\height) {};
		\node[vtx] (v3) at ( \halfsidelength, -\height) {};
		\node[vtx] (v4) at (0, -2*\height/3) {};

		\draw (v1) -- (v2) -- (v4) -- (v3) -- (v1);
		\draw (v4) -- (v1);
	\end{scope}

	\draw (v2) -- (2a1);
	\draw (v3) -- (a1);
\end{tikzpicture}}

The following is an immediate consequence of \cref{characterization of cc3s with lambda at least n-1} and \cref{lm vertices across a 2-cut}.

\begin{corollary}
A \cc2 $G$ satisfies $\lambda(G)=n(G)$ if and only if $G \in \N'$. \qed
\end{corollary}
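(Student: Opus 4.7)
The plan is to proceed by induction on the order of $G$, using the tools that have been assembled: \cref{characterization of cc3s with lambda at least n-1} to handle the \con3 case, \cref{lm vertices across a 2-cut} to reduce across a \mbox{$2$-cut}, and \cref{unique 2cd} to ensure the reduction respects the family $\N'$.

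For the base case, assume $G$ is \mbox{$3$-connected}. Then $G$ has a single \con3 piece, namely itself, and so by definition $G \in \N'$ if and only if $(G,V(G)) \in \N$. \cref{characterization of cc3s with lambda at least n-1}~$(i)$ states precisely that this happens if and only if $\lambda(G)=n(G)$.

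For the inductive step, suppose $G$ is not \mbox{$3$-connected}; let $C$ be any \mbox{$2$-cut} and let $G_1$, $G_2$ denote the marked \mbox{$C$-components}. Since $|V(G_1)|+|V(G_2)|=n(G)$ and \cref{lm vertices across a 2-cut} gives $\lambda(G)=\lambda(G_1)+\lambda(G_2)$, together with the trivial bounds $\lambda(G_i)\leqslant n(G_i)$, we obtain that $\lambda(G)=n(G)$ if and only if $\lambda(G_i)=n(G_i)$ for both $i\in\{1,2\}$. By the induction hypothesis, this is equivalent to $G_1,G_2\in\N'$.

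It remains to observe that $G\in\N'$ if and only if $G_1,G_2\in\N'$. This follows from \cref{unique 2cd}: applying the \mbox{$2$-cut} decomposition procedure starting with the \mbox{$2$-cut} $C$, the multiset of \con3 pieces of $G$ is the (disjoint) union of the \con3 pieces of $G_1$ and those of $G_2$. Since membership in $\N'$ is defined piecewise over \con3 pieces, the claim follows, completing the induction. The only mild subtlety is verifying that decomposing first along $C$ yields the same multiset of \con3 pieces promised by \cref{unique 2cd}, but this is exactly what that theorem asserts, so no genuine obstacle arises.
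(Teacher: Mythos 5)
Your proof is correct and follows exactly the route the paper intends: the paper states this corollary as an immediate consequence of \cref{characterization of cc3s with lambda at least n-1} and \cref{lm vertices across a 2-cut}, and your induction on the order merely spells out those two ingredients, with \cref{unique 2cd} justifying (as it must) that the \mbox{$3$-connected} pieces of $G$ are the disjoint union of those of the marked \mbox{$C$-components}. Nothing to add.
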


\section*{Acknowledgements}
We are very grateful to both anonymous reviewers for their careful reading and helpful suggestions, including spotting a missing case in our original proof of \cref{characterization of pairs that are not lm}.

\end{document}